\definecolor{webgreen}{rgb}{0,.5,0}
\definecolor{webbrown}{rgb}{.8,0,0}
\definecolor{emphcolor}{rgb}{0.95,0.95,0.95}
\ifpdf \hypersetup{pdftex,
	pdfstartview=FitH, 
	bookmarksopen=true,
	bookmarksnumbered=true
} \else \hypersetup{dvips} \fi
\newcommand {\ud}{{\rm d}}
\newcommand {\I}[1]{\mathbbm{1}_{\{#1\}}}
\newcommand{\lev}{L\'{e}vy }
\newtheorem{theorem}{Theorem}[section]
\newtheorem{corollary}{Corollary}[section]
\newtheorem{remark}{Remark}[section]
\newtheorem{lemma}{Lemma}[section]
\newcommand {\R}{\mathbb{R}}
\newcommand {\p}{\mathbb{P}}
\newcommand {\E}{\mathbb{E}}
\newcommand{\diff}{{\rm d}}
\newcommand{\e}{\mathbb{E}}
\def\I{\infty}  \def\s{\sigma} 
\def\beq{\begin{eqnarray}} \def\eeq{\end{eqnarray}}
\def\bea{\begin{eqnarray*}}
\def\eea{\end{eqnarray*}}
\def\le{\left} \def\ri{\right} 
\def\k{\kappa}   \def\t{\tau} 
\def\fr{\frac} \def\th{\theta} \def\p{r}  \def\q{q} \def\la{\label}
\long\def\symbolfootnote[#1]#2{
\begingroup
\def\thefootnote{\fnsymbol{footnote}}\footnote[#1]{#2}
\endgroup}
\def\bc{\begin{cases}
  }
\def\ec{\end{cases}} 
\def\beT{\begin{theorem}
  }
\def\eeT{\end{theorem}}
\def\beR{\begin{remark}}
\def\eeR{\end{remark}}
\def\BEN{\begin{enumerate}}   
\def\EEN{\end{enumerate}} 
\begin{document}
\title{Spectrally negative L\'evy
processes  with Parisian reflection  below  and
classical reflection above}

\author{Florin Avram, Jos\'e-Luis P\'erez, and Kazutoshi Yamazaki }
\maketitle

\begin{abstract}

We consider a company that receives capital injections so as to avoid ruin.  Differently from the classical bail-out settings, where the underlying process is restricted to stay at or above zero, we study the case bail-out can only be made at independent Poisson observation times.
Namely, we study a version of the reflected  process that is pushed up to zero only on Poisson arrival times at which the process is below zero.
We also study the case with additional classical reflection above so as to model a company that pays dividends according to a barrier strategy.  Focusing on the spectrally negative \lev case, we compute, using the scale function, various fluctuation identities, including capital injections and dividends.
%
%
%

\noindent \small{\textbf{Key words:}
	capital injections, dividends, scale functions, \lev processes, excursion theory.  \\
	\noindent  AMS 2010 Subject Classifications: 60G51, 91B30}\\
\end{abstract}



\section{Introduction} \label{section_introduction}

In this paper, we revisit the study of risk processes where a  company is bailed out by capital injections.  In the classical setting, capital injections can be made at all times and instantaneously; the resulting process becomes the classical reflected process that stays at or above zero uniformly in time. In reality, however, this may not be  observable continuously, and it is necessary to consider the time taken to execute this process.  Motivated by recent research on Poisson observations and Parisian ruin found in (among others) \cite{albrecher2015strikingly,AIZ,BPPR,LRZ}, we consider the scenario where bail-outs can only be made at independent Poisson times.

We consider  a general spectrally negative \lev process as the underlying process.  Additionally, we are
 given \emph{Poisson observation times}, or an increasing sequence of jump times of an independent Poisson process.   At each Poisson observation time when the process is below zero, the process is pushed up to zero: we call this \emph{Parisian reflection}.
Related processes with underlying compound Poisson processes have been studied in \cite{albrecher2011randomized}  and
\cite{avanzi2013periodic}. In the former study, a number of  identities were obtained when solvency is only observed periodically, whereas the latter study analyzes a  case where observation intervals are Erlang-distributed.

In this paper, we are also interested in the case of a dividend-paying company that pays dividends according to a barrier strategy.
With regard to this, we consider a version of the (doubly) reflected process, where, given a spectrally negative \lev process reflected from above, it is pushed up to zero at Poisson observation times at which the process is below zero.

Our objective is to obtain, using fluctuation/excursion theories, concise expressions of several identities of  the following:
\begin{enumerate}
\item[(1)] the spectrally negative \lev process with Parisian reflection below, and
\item[(2)] its variant with additional classical reflection above.
\end{enumerate}
In particular, we are interested in the following:
\begin{description}
\item[(absolute ruin)]  We define \emph{absolute ruin} to be the event that the process goes below a specified level $a < 0$.  The absolute ruin probability and its time can be used to evaluate the risk of the company just like the classical ruin, which is the event that the process goes below $0$.
\item[(capital injections)]  Capital injections correspond to Parisian reflection below. We compute their total discounted values for both (1) and (2) for the infinite horizon case as well as for the cases that are killed upon exiting $[a,b]$, $[a, \infty)$ and $(-\infty, b]$ for $a < 0 < b$.
\item[(dividends)]  If dividends are assumed to be paid continuously, then they are modeled by the
 classical reflection above in process (2).  We compute their total expected discounted values  for the infinite horizon case and for the case that is killed upon exiting $[a, \infty)$ for $a < 0$.
 \end{description}

We use the scale function to compute these fluctuation identities.
 It is well known, as in \cite{K}, that the scale function existing for every spectrally one-sided \lev process can be applied to obtain various fluctuation identities of the process and its reflected/refracted processes.

The main difficulty with the spectrally negative \lev process is handling the possible overshoot at its down-crossing time: it is  typically necessary to express the identities in terms of the convolution of the \lev measure and the resolvent measure via the scale function.  Recent results show, however, that these can be concisely written under some conditions.  In this paper, for process (1), we use the simplifying formula obtained in \cite{LRZ}. Together with this, the desired identities for the \emph{bounded variation case} can be obtained using a well-known technique via the strong Markov property; see, e.g., \cite{AIZ}.

For the \emph{unbounded variation case}, we shall use excursion theory instead of using the commonly used approximation methods as in \cite{AIZ}.  In doing so, we first obtain an excursion-measure version of the simplifying formula in \cite{LRZ} (see Theorem \ref{lemma_key_excursion}). Using this and excursion theory, we can obtain identities directly without relying on the approximation scheme.  Our approach follows from the recent characterization of the excursion measure away from $0$, as obtained in \cite{PPR2}.

For  process (2), we derive an analogue of the simplifying formula in \cite{LRZ} for the spectrally negative \lev process reflected from above (see Theorem \ref{lemma_simplifying_reflected}). With this and the results for (1), similar fluctuation identities can be obtained when process (1) is replaced with (2).


The rest of the paper is organized as follows.  Section \ref{section_model} introduces (1) the spectrally negative \lev process with Parisian reflection below and (2) its version with additional classical reflection above. 
The scale functions and their applications are also reviewed.
Section \ref{section_main} presents the main results for both processes (1) and (2) and their corollaries.  Sections \ref{section_bounded} and \ref{section_unbounded}  give the proofs of the main results related to process (1) for the bounded and unbounded variation cases, respectively.   Finally, Section \ref{section_proof_reflected} gives those of the theorems related to (2).  Some proofs of the  corollaries are provided in the appendix.

\section{\lev processes with Parisian reflection below} \label{section_model}

Let $X = (X(t); t \geq 0)$ denote a spectrally negative \lev process defined on a probability space $(\Omega, \mathcal{F}, \mathbb{P})$.
For $x\in \R$, we denote by $\mathbb{P}_x$ the law of $X$ when it starts at $x$ and write for convenience  $\mathbb{P}$ in place of $\mathbb{P}_0$. Accordingly, we shall write $\e_x$ and $\e$ for the associated expectation operators.  We assume that its
Laplace exponent $\k: [0, \infty) \to \R$ is given by
\begin{align}
 \E (e^{\th X(t)})=e^{ t \k(\th)}, \quad t \geq 0, \; \theta \geq 0, \label{def_Laplace_exp}
 \end{align}
with its L\'evy-Khintchine decomposition
 $$ \k(\th) = \frac{\s^2}{2}\th^2 + \gamma \th + \int_{(-\infty, 0)}[e^{\th y} - 1 - \th 1_{\{y > -1\}} ]\Pi(\diff y), \quad \theta \geq 0.$$
Here, $\s \geq 0,$ $\gamma \in \R$, and the \lev measure satisfies $ \int_{(-\infty, 0)} (1\wedge y^2) \Pi(\diff y) < \I$. Let $\mathbb{F} = (\mathcal{F}(t); t \geq 0)$ be the filtration generated by $X$. 

It is well known that $X$ has paths of bounded variation if and only if $\sigma=0$ and $\int_{(-1, 0)} |y|\Pi(\mathrm{d}y) < \infty$. In this case $X$ can be written as
\begin{equation}
X(t)=ct-S(t), \,\,\qquad t\geq 0,\notag
\end{equation}
where
\begin{align*}
c:=\gamma-\int_{(-1,0)} y\Pi(\mathrm{d}y) 
\end{align*}
 and $(S(t); t\geq0)$ is a driftless subordinator. We exclude the case $X$ is the negative of a subordinator, and hence necessarily $c>0$. Its Laplace exponent is given by
\begin{equation*}
\kappa(\theta) = c \theta+\int_{(-\infty,0)}\big( {\rm e}^{\theta y}-1\big)\Pi(\ud y), \quad \theta \geq 0.
\end{equation*}

\subsection{\lev processes with Parisian reflection below} \label{subsection_process_defined}



Let $\mathcal{T}_\p=\{T(i); i \in \mathbb{N}\}$ be an increasing sequence of epochs of a Poisson process with rate $\p >0$, independent of $X$.   We construct the \emph{\lev process with Parisian reflection below} $X_r = (X_r(t); t \geq 0)$ as follows: the process is only observed at times $\mathcal{T}_\p$ and is pushed up to $0$ if and only if it is below $0$.

More precisely, we have
\begin{align} \label{X_X_r_the_same}
X_r(t) = X(t), \quad 0 \leq t < T_0^-(1)
\end{align}
 where
\begin{align} T_{0}^-(1) := \inf\{  S \in \mathcal{T}_r :  X(S-) < 0 \}; \label{def_T_0_1}
 \end{align}
 here and throughout, let $\inf \varnothing = \infty$.
 The process is then pushed upward by $|X(T_0^-(1))|$ so that $X_r(T_0^-(1)) = 0$. For $T_0^-(1) \leq t < T_0^-(2)  := \inf\{ S \in \mathcal{T}_r : S > T_0^-(1), X_r(S-) < 0\}$, we have $X_r(t) = X(t) + |X(T_0^-(1))|$.  The process can be constructed by repeating this procedure.

Suppose $R_r(t)$ is the cumulative amount of (Parisian) reflection until time $t \geq 0$. Then we have
\begin{align*}
X_r(t) = X(t) + R_r(t), \quad t \geq 0,
\end{align*}
 with
\begin{align}
	R_r(t) := \sum_{i=1}^\infty 1_{\{ T_0^-(i) \leq t\}} |X_r(T_0^-(i) -)|, \quad t \geq 0, \label{def_L_r}
\end{align}
where $(T_{0}^-(n); n \geq 1)$ can be constructed inductively by \eqref{def_T_0_1} and
\begin{eqnarray*} T_{0}^-(n+1) := \inf\{ S \in \mathcal{T}_r : S > T_0^-(n), X_r(S-) < 0 \}, \quad n \geq 1.
 \end{eqnarray*}
The process $Y_r^b$ with additional (classical) reflection above can be defined analogously.  Fix $b > 0$.  Let
 \begin{align*}
 Y^b(t) := X(t) - L^b(t) \quad \textrm{where } L^b(t) := \sup_{0 \leq s \leq t} (X(s) - b) \vee 0, \quad t \geq 0,
 \end{align*}
 be the process reflected from above at $b$.  We have
 \begin{align}
 Y^b_r(t) = Y^b(t), \quad 0 \leq t < \widehat{T}_0^{-} (1) \label{Y_matches}
 \end{align}
  where $\widehat{T}_{0}^-(1) := \inf\{ S \in \mathcal{T}_r : Y^b(S-) < 0 \}$.
The process then jumps upward by $|Y^b(\widehat{T}_0^-(1))|$ so that $Y^b_r(\widehat{T}_0^-(1)) = 0$. For $\widehat{T}_0^-(1) \leq t < \widehat{T}_0^-(2)  := \inf\{S \in \mathcal{T}_r : S > \widehat{T}_0^-(1), Y_r^b(S-) < 0 \}$, $Y_r^b(t)$ is the reflected process of $X(t) - X(\widehat{T}_0^-(1))$. 
The process can be constructed by repeating this procedure.
It is clear that it admits a decomposition
\begin{align*}
Y^b_r(t) = X(t) + R_r^b (t) - L_r^b(t), \quad t \geq 0,
\end{align*}
where $R_r^b(t)$ and $L_r^b(t)$ are, respectively, the cumulative amounts of Parisian and classical reflection until time $t$.

For the sake of completeness, we provide below a formal construction of the processes $Y^b_r$, $R_r^b$, and $L_r^b$.
\begin{center}
\line(1,0){300}
\end{center}
 \textbf{Construction of the process $Y^b_r$, $R_r^b$, and $L_r^b$ under $\mathbb{P}_x$}
\begin{description}
\item[Step 0:] Set $n = 0$, $\widehat{T}_0^- (0) = 0$, $R_r^b (0-) = L_r^b(0-) = 0$,  and $\tilde{x} = x$ and go to \textbf{Step 1}. 
\item[Step 1:] Let $\{ \widetilde{Y}^b(t); t \geq \widehat{T}_0^- (n) \}$ be the reflected \lev process with the barrier level $b$ that starts at the time $\widehat{T}_0^- (n)$ at the level $\tilde{x}$, given by
\begin{align*}
\widetilde{Y}^b(t) = \widetilde{X}(t) - \widetilde{L}^b(t)
\end{align*}
where $\widetilde{X}(t) := X(t) - X(\widehat{T}_0^- (n))$ and $\widetilde{L}^b(t) := \sup_{\widehat{T}_0^- (n) \leq s \leq t} (\widetilde{X}(s) - b) \vee 0$.
\begin{description}
\item[Step 1-0:]   Set $\widehat{T}_0^- (n+1) := \inf\{S \in \mathcal{T}_r : S > \widehat{T}_0^-(n), \widetilde{Y}^b(t)  < 0 \} $.
\item[Step 1-1:]  For $t \in (\widehat{T}_0^- (n), \widehat{T}_0^- (n+1))$,  set $R_r^b (t) = R_r^b (\widehat{T}_0^- (n))$.  Also, set $R_r^b (\widehat{T}_0^- (n+1)) = R_r^b (\widehat{T}_0^- (n)) + |\widetilde{Y}^b(\widehat{T}_0^- (n+1))|$.
\item[Step 1-2:]  For $t \in (\widehat{T}_0^- (n), \widehat{T}_0^- (n+1)]$,  set $L_r^b (t) = L_r^b (\widehat{T}_0^- (n)) + \widetilde{L}^b (t) $. 
\item[Step 1-3:]  For $t \in [\widehat{T}_0^- (n), \widehat{T}_0^- (n+1))$,  set $Y^b_r(t) = X(t) + R_r^b (t) - L_r^b(t)$. 
\end{description}
Increment the value of $n$, set $\tilde{x} = 0$ and go back to the beginning of \textbf{Step 1}.
\end{description}
\begin{center}
\line(1,0){300}
\end{center}

\subsection{Scale functions}
Fix $q \geq 0$. Let $W_q$ be the scale function of $X$.  Namely,  this is a mapping from $\R$ to $[0, \infty)$ that takes the value zero on the negative half-line, while on the positive half-line it is a strictly increasing function that is defined by its Laplace transform:
\begin{align} \label{scale_function_laplace}
\begin{split}
\int_0^\infty  \mathrm{e}^{-\theta x} W_q(x) \diff x &= \frac 1 {\kappa(\theta)-q}, \quad \theta > \Phi_q, \\
\end{split}
\end{align}
where
\begin{align}
\begin{split}
\Phi_q := \sup \{ \lambda \geq 0: \kappa(\lambda) = q\}. \notag \end{split}
\end{align}
In particular, when $q=0$, we shall drop the superscript.
We also define, for $x \in \R$,
\begin{align*}
\overline{W}_q(x) &:=  \int_0^x W_q(y) \diff y, \\
Z_q(x) &:= 1 + q \overline{W}_q(x),  \\
\overline{Z}_q(x) &:= \int_0^x Z_q (z) \diff z = x + q \int_0^x \int_0^z W_q (w) \diff w \diff z.
\end{align*}
Noting that $W_q(x) = 0$ for $-\infty < x < 0$, we have
\begin{align*}
\overline{W}_q(x) = 0, \quad Z_q(x) = 1,  \quad \textrm{and} \quad \overline{Z}_q(x) = x, \quad x \leq 0.  
\end{align*}


Let \begin{align*}
\tau_a^- := \inf \left\{ t > 0: X(t) < a \right\} \quad \textrm{and} \quad \tau_a^+ := \inf \left\{ t > 0: X(t) >  a \right\}, \quad a \in \R.
\end{align*}
Then, for any $b > a$ and $x \leq b$, 
\begin{align}
\begin{split}
\E_x \left( e^{-q \tau_b^+}; \tau_b^+ < \tau_a^- \right) &= \frac {W_q(x-a)}  {W_q(b-a)}, \\
 \E_x \left( e^{-q \tau_a^-}; \tau_b^+ > \tau_a^- \right) &= Z_q(x-a) -  Z_q(b-a) \frac {W_q(x-a)}  {W_q(b-a)}.
\end{split}
 \label{laplace_in_terms_of_z}
\end{align}
In addition, as in Theorem 8.7 of \cite{K}, the \emph{$q$-resolvent measure} is known to have a density written as 
\begin{align} \label{resolvent_density}
\E_x \Big( \int_0^{\tau_{a}^- \wedge \tau^+_b} e^{-qt} 1_{\left\{ X(t) \in \diff y \right\}} \diff t\Big) &= \Big[ \frac {W_q(x-a) W_q (b-y)} {W_q(b-a)} -W_q (x-y) \Big] \diff y,  \quad x \leq b.
\end{align}

It is known that a spectrally negative \lev process creeps downwards (i.e.\ $\mathbb{P}_x ( X(\tau_a^-)= a, \tau_a^- < \infty ) > 0$ for $x > a$) if and only if $\sigma > 0$ (see Exercise 7.6 of \cite{K}). 
On the other hand, it is known that  for any $a \leq x\leq  b$ and a positive measurable function $l$ (with a slight abuse of notation noting that $W_q$ is differentiable on $(0, \infty)$ when $\sigma > 0$ as in Remark \ref{remark_smoothness_zero} below),
\begin{align} \label{undershoot_expectation}
\E_x\Big(e^{-q\tau_a^-}l(X(\tau_a^-))&; \tau_a^-<\tau_b^+ \Big)=l(a)\frac{\sigma^2}{2}\left[W_q'(x-a)-W_q(x-a)\frac{W_q'(b-a)}{W_q(b-a)}\right]\\
&+\int_0^{b-a}\int_{(-\infty,-y)} l(y+u+a)\left\{\frac{W_q(x-a) W_q(b-a-y)}{ W_q(b-a)}-W_q(x-a-y)\right\}\Pi(\ud u)\ud y;\notag
\end{align}
see for instance the identity (4) in \cite{LRZ}.

By taking $a \downarrow -\infty$ and $b \uparrow \infty$ in \eqref{resolvent_density}, we obtain that $U_{q}(\diff y):=\int^{\infty}_{0}e^{-qt}\mathbb{P}(X(t)\in \diff y) \diff t $, $y\in \mathbb{R}$,
 is absolutely continuous with respect to the Lebesgue measure, and its density is given by
 \begin{equation}
 u_{\q}(y)=\kappa'(\Phi_q)^{-1}e^{-\Phi_{\q}y}-W_{\q}(-y),\qquad y\in \mathbb{R}; \label{u_q_resolvent}
 \end{equation}
 see for instance Corollary 8.9 in \cite{K} and Exercise 2 in Chapter VII in \cite{B}.

 \begin{remark} \label{remark_smoothness_zero}
\begin{enumerate}
\item If $X$ is of unbounded variation or the \lev measure is atomless, it is known that $W_q$ is $C^1(\R \backslash \{0\})$; see, e.g.,\ \cite[Theorem 3]{Chan2011}.
\item Regarding the asymptotic behavior near zero, as in Lemmas 3.1 and 3.2 of \cite{KKR},
\begin{align}\label{eq:Wqp0}
\begin{split}
W_q (0) &= \left\{ \begin{array}{ll} 0 & \textrm{if $X$ is of unbounded
variation,} \\ \frac 1 {c} & \textrm{if $X$ is of bounded variation,}
\end{array} \right. \\
W_q^{\prime} (0+) &:= \lim_{x \downarrow 0}W_q^{\prime} (x) =
\left\{ \begin{array}{ll}  \frac 2 {\sigma^2} & \textrm{if }\sigma > 0, \\
\infty & \textrm{if }\sigma = 0 \; \textrm{and} \; \Pi(-\infty,0) = \infty, \\
\frac {q + \Pi(-\infty, 0)} {c^2} &  \textrm{if }\sigma = 0 \; \textrm{and} \; \Pi(-\infty, 0) < \infty.
\end{array} \right.
\end{split}
\end{align}
On the other hand, as in Lemma 3.3 of \cite{KKR}, 
\begin{align}
\begin{split}
e^{-\Phi_q x}W_q (x) \nearrow \kappa'(\Phi_q)^{-1}, \quad \textrm{as } x \uparrow \infty,
\end{split}
\label{W_q_limit}
\end{align}
where in the case $\kappa'(0+) = 0$, the right hand side, when $q=0$,  is understood to be infinity.
\item As in (8.22) and Lemma 8.2 of \cite{K}, $ {W_q^{\prime}(y+)} / {W_q(y)} \leq  {W_q^{\prime}(x+)} / {W_q(x)}$ for  $y > x > 0$.
In all cases, $W_q^{\prime}(x-) \geq W_q^{\prime}(x+)$ for all $x >0$.
\end{enumerate}
\end{remark}

Along this work we also define, for $\theta\geq 0$, $\alpha\geq0$, and $x \in \R$, 
\begin{align}
Z_{\alpha}(x, \theta) &:=e^{\theta x} \left( 1 + (\alpha- \k(\theta)) \int_0^{x} e^{-\theta z} W_{\alpha}(z) \diff z	\right) \la{Zs}, \\
Z_{\alpha,\beta}(x,\th ) &:=
\fr{\beta}{\alpha+\beta-\k(\th)} Z_{\alpha}(x,\th)
+\fr{\alpha-\k(\th)}{\alpha+\beta-\k(\th)} Z_{\alpha}(x,\Phi_{\alpha+\beta} ), \quad \beta \geq - \alpha,
\la{Z2}
\end{align}
where the case $\th=\Phi_{\alpha+\beta}$ in \eqref{Z2} should be interpreted in the limiting sense as $\theta \rightarrow \Phi_{\alpha+\beta}$.

In particular,
\begin{align} \label{Z_special}
\begin{split}
Z_{\alpha}(x, 0) &=Z_{\alpha}(x), \\
Z_{\alpha}(x, \Phi_{\alpha+\beta}) &=e^{\Phi_{\alpha+\beta} x} \left( 1 -\beta \int_0^{x} e^{-\Phi_{\alpha+\beta} z} W_{\alpha}(z) \diff z \right) = \beta \int_0^\infty e^{-\Phi_{\alpha+\beta} z} W_\alpha(z+x) \diff z, \\
Z_{\alpha,\beta}(x) &:=Z_{\alpha,\beta}(x,0) =\fr{\beta}{\alpha+\beta} Z_{\alpha}(x)
+\fr{\alpha}{\alpha+\beta} Z_{\alpha}(x,\Phi_{\alpha+\beta} ).
\end{split}
\end{align}
Define also, for all $x \in \R$ and $a \leq 0$, 
\begin{align} \label{W_a_def}
\begin{split}
W^a_{\alpha,\beta}(x) &:=W_{\alpha+\beta}(x-a)- \beta\int_{0}^{x}  W_{\alpha}(x-y)W_{\alpha+\beta}(y-a)\diff y =W_{\alpha}(x-a)+\beta\int_0^{-a}W_{\alpha}(x-u-a)W_{\alpha+\beta}(u) \diff u, \\
Z_{\alpha,\beta}^a(x) &:= Z_{\alpha+\beta}(x-a)- \beta \int_0^x W_{\alpha}(x-y)  Z_{\alpha+\beta}(y-a)  \diff y
=Z_{\alpha}(x-a)+\beta\int_0^{-a}W_{\alpha}(x-u-a)Z_{\alpha+\beta}(u) \diff u, \\
\overline{Z}_{\alpha,\beta}^a(x) &:= \overline{Z}_{\alpha+\beta}(x-a)- \beta \int_0^x W_{\alpha}(x-y)  \overline{Z}_{\alpha+\beta}(y-a)  \diff y =\overline{Z}_{\alpha}(x-a)+\beta\int_0^{-a}W_{\alpha}(x-u-a)\overline{Z}_{\alpha+\beta}(u)\diff u,
\end{split}
\end{align}
where the second equalities hold by (5) of \cite{LRZ} and (3.4) of \cite{YP}, and in particular
\begin{align}\label{rel_Z_0}
W^0_{\alpha,\beta}(x)=W_{\alpha}(x), \quad Z^0_{\alpha,\beta}(x)=Z_{\alpha}(x), \quad \textrm{and} \quad \overline{Z}^0_{\alpha,\beta}(x)=\overline{Z}_{\alpha}(x).
\end{align}
These functions are related by the following: by  \eqref{W_q_limit} and  \eqref{Z_special}, for $x \in \R$ and $a < 0$, respectively,  
	\begin{align}\label{nl}
	\lim_{a\downarrow -\infty} \frac{W_{\alpha,\beta}^a(x)}{W_{\alpha+\beta}(-a)}
	= Z_{\alpha}(x,\Phi_{\alpha+\beta}) 	\quad
\textrm{and} \quad \lim_{x\uparrow \infty}\frac{W_{\alpha,\beta}^a(x)}{W_{\alpha}(x)}
	=Z_{\alpha+\beta}(-a,\Phi_{\alpha}).
	\end{align}

\section{Main results} \label{section_main}

In this section, we summarize the main results related to the processes $X_r$ and $Y_r^b$ as defined in Section \ref{subsection_process_defined}.
The proofs of Theorems \ref{theorem_laplace} and  \ref{expected_bailouts} are given in Section \ref{section_bounded} for the case $X$ is of bounded variation and in Section \ref{section_unbounded} for the case of unbounded variation.  The proofs of Theorems  \ref{theorem_R_b_r} and  \ref{theorem_L_b_r} are given in Section \ref{section_proof_reflected}.  Those for corollaries are given in Appendix \ref{appendix_proofs}. 

Throughout, let us fix $r > 0$ and define \begin{align}
\tau_a^-(r) := \inf \left\{ t > 0: X_r(t) < a \right\} \quad \textrm{and} \quad \tau_a^+(r) := \inf \left\{ t > 0: X_r(t) >  a \right\}, \quad a \in \R. \label{def_tau_a_plus_minus}
\end{align}
In particular, the former for $a < 0$ can be understood as the ``absolute ruin" as discussed in Section \ref{section_introduction}.


\subsection{Identities for the process $X_r$} \label{subsection_joint_Laplace}


We shall first obtain the joint Laplace transform (with killing) of the stopping times \eqref{def_tau_a_plus_minus} and the value of capital injections as in \eqref{def_L_r}. 

\beT[Joint Laplace transform with killing]  \label{theorem_laplace}
For all $q, \theta \geq 0$, $a< 0<b$, and $x \leq b$,
\begin{align}  \la{Parisbailouts}  
g(x,a,b,\theta)&:=\E_x\left( e^{-q\tau_b^+ (r) - \theta R_r(\tau_b^+ (r))}; \t_b^+(r)< \tau_a^-(r)\right)=\frac{\mathcal{H}^a_{q,r}(x,\theta)}{\mathcal{H}^a_{q,r}(b,\theta)}, \\
h(x,a,b,\theta)&:=\E_x\left(e^{-q\tau_a^-(r) -\theta R_r(\tau_a^- (r) )}; \tau_a^-(r) < \t_b^+(r)\right)=\mathcal{I}_{\q,\p}^a(x)-
\frac{\mathcal{H}^a_{q,r}
(x,\theta)}{\mathcal{H}^a_{q,r}(b,\theta)} \mathcal{I}_{\q,\p}^a(b), \la{Parisbailouts_2}
\end{align}
where, for $y \in \R$, 
\begin{align} \label{def_H_I}
\begin{split}
\mathcal{H}^a_{q,r}(y,\theta)&:=\p\int_{0}^{-a} e^{-\theta u}\Big[W^a_{\q,\p}(y)\frac{W_{\q+\p} (u)} {W_{\q+\p}(-a)}-W^{-u}_{\q,\p}(y)\Big]\diff u+\frac{W^a_{\q,\p}(y)}{W_{\q+\p}(-a)}, \\
\mathcal{I}_{\q,\p}^a(y)&:=Z^a_{\q,\p}(y)-W^a_{\q,\p}(y)\frac{Z_{\q+\p}(-a)}{W_{\q+\p}(-a)}.
\end{split}
\end{align}
In particular, by  \eqref{Z_special} and \eqref{W_a_def}, simple computation gives
\begin{align*}
\mathcal{H}^a_{q,r}(y,0)
&= (q+r)^{-1} \Big( \frac {W^a_{\q,\p}(y)} {W_{\q+\p}(-a)} \big[ r Z_{q+r}(-a) + q \big] + r \big[ Z_{q}(y) - Z_{q,r}^a (y) \big] \Big) \\ &= (q+r)^{-1} \Big(- r \mathcal{I}^a_{q,r} (y) + q \frac {W^a_{\q,\p}(y)} {W_{\q+\p}(-a)} + r Z_q(y)\Big).
\end{align*}

\eeT

\begin{remark}  In Theorem \ref{theorem_laplace}, by taking $a \uparrow 0$, we recover \eqref{laplace_in_terms_of_z}: for $\theta \geq 0$, $b > 0$, and $0 \leq x \leq b$,
\begin{align*}
\lim_{a\uparrow 0}g(x,a,b,\th)&=W_q(x)/W_q(b) = \E_x \big(e^{-q \tau_b^+}; \tau_b^+ < \tau_0^- \big), \\
\lim_{a\uparrow 0}h(x,a,b,\th)&=Z_q(x)- {Z_q(b)} \frac {W_q(x)} {W_q(b)} = \E_x \big(e^{-q \tau_0^-}; \tau_b^+ >  \tau_0^- \big).
\end{align*}
Indeed, the former is immediate by the convergence
\begin{align}
\lim_{a\uparrow 0}\mathcal{H}^a_{q,r}(x,\theta)W_{q+r}(-a)=\lim_{a\uparrow 0}W_{q,r}^a(x)=W_q(x). \label{limit_fraction_W_H}
\end{align}
The latter holds by \eqref{limit_fraction_W_H} and because
\begin{align*}
W_{q+r}(-a) &\big[ \mathcal{I}_{\q,\p}^a(x)\mathcal{H}^a_{q,r}(b,\theta) -\mathcal{I}_{\q,\p}^a(b)\mathcal{H}^a_{q,r}(x,\theta)\big] \\
&= W_{q+r}(-a) \big[ Z_{q,r}^a(x)\mathcal{H}^a_{q,r}(b,\theta) -Z_{q,r}^a(b)\mathcal{H}^a_{q,r}(x,\theta)\big] \\
				&+ Z_{q+r} (-a) r \Big( W^a_{\q,\p}(x) \int_0^{-a}e^{-\th u}W_{q,r}^{-u}(b) \diff u -W^a_{\q,\p}(b) \int_0^{-a}e^{-\th u}W_{q,r}^{-u}(x)\diff u \Big) \\ &\xrightarrow{a \uparrow 0} Z_q(x)W_q(b)- Z_q(b)W_q(x).
\end{align*}
\end{remark}
By taking $b \uparrow \infty$ and $a \downarrow  - \infty$ in  Theorem \ref{theorem_laplace}, we have the following.
\begin{corollary} \label{corollary_laplace_tau_a_b_infty}
Fix $q, \theta \geq 0$.


(i) Suppose $q > 0$. For $a <  0$ and $x \in \R$,
	\begin{align*}
	\E_x\left( e^{-q\tau_a^-(r)-\theta R_r(\tau_a^-(r))};\tau_a^-(r)<\infty\right) =\mathcal{I}_{\q,\p}^a(x)-
	\mathcal{J}_{\q,\p}(-a)\frac{\mathcal{H}^a_{q,r}
		(x,\theta)}{\mathcal{G}_{q,r}(-a,\theta)},
	\end{align*}
where, for $y \in \R$,
	\begin{align*}
	\mathcal{G}_{q,r}(y,\theta)&:=\p\int_{0}^{y} e^{-\theta u}\Big[Z_{q+r}(y,\Phi_q)\frac{W_{\q+\p} (u)} {W_{\q+\p}(y)}-Z_{q+r}(u,\Phi_q)\Big]\diff u+\frac{Z_{\q+\p}(y,\Phi_q)}{W_{\q+\p}(y)}, \\
	\mathcal{J}_{\q,\p}(y)&:=\frac{q}{\Phi_q}Z_{q+r,-r}(y)-Z_{q+r}(y,\Phi_q)\frac{Z_{q+r}(y)}{W_{q+r}(y)}.
	\end{align*}
	Here, in particular, using the fact that $Z'_{q+r}(x, \Phi_q) = \Phi_q Z_{q+r}(x, \Phi_q) + r W_{q+r}(x)$,
		\begin{align*}
	\mathcal{G}_{q,r}(y,0) = (q+r)^{-1} \Big[ \frac {Z_{q+r}(y,\Phi_q)} {W_{\q+\p}(y)} \big(r Z_{q+r}(y) + q \big) - \frac {rq} {\Phi_q} \big( Z_{q+r,-r} (y) - 1\big) \Big].
	\end{align*}

For the case $q = 0$, it holds with
	\begin{align*}
	\mathcal{J}_{0,\p}(y)&:= \left\{ \begin{array}{ll}r\Phi_0^{-1}[Z_r(y,\Phi_0)-Z_r(y)]-Z_{r}(y,\Phi_0)\frac{Z_{r}(y)}{W_{r}(y)} & \Phi_0 > 0, \\ r \overline{Z}_r(y)+{\k'(0+)} -\frac {Z_{r}(y)^2} {W_{\p}(y)} & \Phi_0 = 0,
	\end{array} \right. \\
	\mathcal{G}_{0,r}(y,0) &:= \left\{  \begin{array}{ll}
	\frac {Z_{r}(y,\Phi_0)} {W_{\p}(y)}  Z_{r}(y)  - r\Phi_0^{-1}[Z_r(y,\Phi_0)-Z_r(y)] & \Phi_0 > 0, \\
	\frac {Z_{r}(y)^2} {W_{\p}(y)}  - r\overline{Z}_r(y) & \Phi_0 = 0. \\
	\end{array} \right.
	\end{align*}
	
	(ii) For $b > 0$ and $x \leq b$,
\begin{align*}
\E_x\left(e^{-q\tau_b^+(r)-\theta R_r(\tau_b^+(r))} ; \tau_b^+(r) < \infty\right)=
\frac{Z_{\q,\p}(x,\th)}{Z_{\q,\p}(b,\th)}.
\end{align*}

(iii) For $b > 0$ and $x \leq b$,
		\begin{align*}
		\E_x \big(e^{- \q \tau_b^+(r)} ;\tau_b^+(r)<T ^-_0(1) \big) =\frac{Z_q(x,\Phi_{\q+\p})}{Z_q(b,\Phi_{\q+\p})},
		\end{align*}
		which matches the result in \cite{AIrisk} for the case $q = 0$.

\end{corollary}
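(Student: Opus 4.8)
The plan is to derive all three parts by letting parameters tend to their extremes in Theorem~\ref{theorem_laplace}: $b\uparrow\infty$ in \eqref{Parisbailouts_2} for (i), $a\downarrow-\infty$ in \eqref{Parisbailouts} for (ii), and $\theta\uparrow\infty$ followed by $a\downarrow-\infty$ for (iii). On the probabilistic side the interchange of limit and expectation is routine: $X_r$ is a non-exploding c\`adl\`ag process, so $\tau_b^+(r)\uparrow\infty$ and $\tau_a^-(r)\uparrow\infty$ almost surely as $b\uparrow\infty$ and $a\downarrow-\infty$, hence the events in \eqref{Parisbailouts}--\eqref{Parisbailouts_2} converge monotonically to the ones in the corollary while the discount/penalty factors are monotone in the relevant parameter, so monotone (or dominated) convergence applies. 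All the substance lies in evaluating the limits of the scale-function expressions, for which the tools are \eqref{W_q_limit}, the consequent $W_q(b-c)/W_q(b)\to e^{-\Phi_q c}$, and \eqref{nl}.

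For part (i), after the probabilistic reduction one must compute $\lim_{b\uparrow\infty}\big[\mathcal I^a_{q,r}(x)-\mathcal I^a_{q,r}(b)\,\mathcal H^a_{q,r}(x,\theta)/\mathcal H^a_{q,r}(b,\theta)\big]$. I would divide $\mathcal H^a_{q,r}(b,\theta)$ and $\mathcal I^a_{q,r}(b)$ by $W_q(b)$ and pass to the limit inside the (finite) $u$-integrals, using \eqref{W_q_limit}, $W_q(b-c)/W_q(b)\to e^{-\Phi_q c}$, and \eqref{nl} (which gives $W^a_{q,r}(b)/W_q(b)\to Z_{q+r}(-a,\Phi_q)$ and $W^{-u}_{q,r}(b)/W_q(b)\to Z_{q+r}(u,\Phi_q)$), thereby obtaining $\lim_b\mathcal H^a_{q,r}(b,\theta)/W_q(b)=\mathcal G_{q,r}(-a,\theta)$ and $\lim_b\mathcal I^a_{q,r}(b)/W_q(b)=\mathcal J_{q,r}(-a)$. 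The one point needing a short computation is that $\lim_b Z^a_{q,r}(b)/W_q(b)$ coincides with the $\frac q{\Phi_q}Z_{q+r,-r}(-a)$ term of $\mathcal J_{q,r}$ (respectively the $r\Phi_0^{-1}[Z_r(-a,\Phi_0)-Z_r(-a)]$ term, respectively $r\overline{Z}_r(-a)+\kappa'(0+)$, in the other regimes); I would verify this by checking that both sides, viewed as functions of $y=-a$, satisfy the same first-order linear ODE with the same value at $0$, using $Z_\alpha'(\cdot,\theta)=\theta Z_\alpha(\cdot,\theta)+(\alpha-\kappa(\theta))W_\alpha$ (so in particular $Z'_{q+r}(\cdot,\Phi_q)=\Phi_q Z_{q+r}(\cdot,\Phi_q)+rW_{q+r}$). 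The regimes $q>0$, $q=0$ with $\Phi_0>0$, and $q=0$ with $\Phi_0=0$ are handled separately; in the last, the isolated constant ``$1$'' in $Z^a_{0,r}(b)$, divided by $W_0(b)\to1/\kappa'(0+)$, is precisely what produces the $\kappa'(0+)$ appearing in $\mathcal J_{0,r}$.

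For part (ii), after the probabilistic reduction one needs $\lim_{a\downarrow-\infty}\mathcal H^a_{q,r}(x,\theta)/\mathcal H^a_{q,r}(b,\theta)$. Writing
\[
\mathcal H^a_{q,r}(y,\theta)=\frac{W^a_{q,r}(y)}{W_{q+r}(-a)}\Big(1+r\!\int_0^{-a}\!e^{-\theta u}W_{q+r}(u)\,\diff u\Big)-r\!\int_0^{-a}\!e^{-\theta u}W^{-u}_{q,r}(y)\,\diff u,
\]
\eqref{nl} gives $W^a_{q,r}(y)/W_{q+r}(-a)\to Z_q(y,\Phi_{q+r})$ and $W^{-u}_{q,r}(y)/W_{q+r}(u)\to Z_q(y,\Phi_{q+r})$, so the two terms on the right each blow up (when $\theta\leq\Phi_{q+r}$) like $Z_q(y,\Phi_{q+r})\int_0^{-a}e^{-\theta u}W_{q+r}(u)\,\diff u$; the crux is that these leading parts cancel, so that $\mathcal H^a_{q,r}(y,\theta)$, renormalised by an $a$-dependent but $y$-independent factor ($\int_0^{-a}e^{-\theta u}W_{q+r}(u)\,\diff u$ if $\theta\leq\Phi_{q+r}$, else $W_{q+r}(-a)$), converges to a $y$-free multiple of $Z_{q,r}(y,\theta)$, whence $\mathcal H^a_{q,r}(x,\theta)/\mathcal H^a_{q,r}(b,\theta)\to Z_{q,r}(x,\theta)/Z_{q,r}(b,\theta)$. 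This cancellation is the step I expect to be the main obstacle: it requires estimating $W^{-u}_{q,r}(y)/W_{q+r}(u)-Z_q(y,\Phi_{q+r})$ well enough to handle its integral against $e^{-\theta u}W_{q+r}(u)$, for which the representation $W^{-u}_{q,r}(y)=W_{q+r}(y+u)-r\int_0^yW_q(y-z)W_{q+r}(z+u)\,\diff z$ together with an Abelian argument for ratios of divergent integrals whose integrands have a common limiting ratio should be the way in.

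For part (iii), I would first let $\theta\uparrow\infty$ in \eqref{Parisbailouts}. Since $R_r\geq0$, dominated convergence gives $g(x,a,b,\theta)\to\E_x(e^{-q\tau_b^+(r)};\,\tau_b^+(r)<\tau_a^-(r),\ R_r(\tau_b^+(r))=0)$; as $R_r(\tau_b^+(r))=0$ precisely when no injection has occurred by time $\tau_b^+(r)$, i.e.\ $\tau_b^+(r)<T_0^-(1)$, and since $X_r$ agrees with $X$ before the first injection, this limit equals $\E_x(e^{-q\tau_b^+};\,\tau_b^+<\tau_a^-\wedge T_0^-(1))$ (with $\tau_b^+,\tau_a^-$ relative to $X$). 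On the analytic side $e^{-\theta u}\to0$ kills the $u$-integral in $\mathcal H^a_{q,r}(y,\theta)$, leaving $\mathcal H^a_{q,r}(y,\theta)\to W^a_{q,r}(y)/W_{q+r}(-a)$, so the ratio tends to $W^a_{q,r}(x)/W^a_{q,r}(b)$. Now letting $a\downarrow-\infty$: $\tau_a^-\uparrow\infty$ gives the left side $\to\E_x(e^{-q\tau_b^+};\tau_b^+<T_0^-(1))=\E_x(e^{-q\tau_b^+(r)};\tau_b^+(r)<T_0^-(1))$, while $W^a_{q,r}(x)/W^a_{q,r}(b)=\big(W^a_{q,r}(x)/W_{q+r}(-a)\big)/\big(W^a_{q,r}(b)/W_{q+r}(-a)\big)\to Z_q(x,\Phi_{q+r})/Z_q(b,\Phi_{q+r})$ by \eqref{nl}, which is the asserted formula. (Equivalently, one may let $\theta\uparrow\infty$ in part (ii): $Z_{q,r}(x,\theta)\to Z_q(x,\Phi_{q+r})$ from \eqref{Z2}, since $\kappa(\theta)\to\infty$ forces $\frac r{q+r-\kappa(\theta)}Z_q(x,\theta)\to0$ and $\frac{q-\kappa(\theta)}{q+r-\kappa(\theta)}\to1$.)
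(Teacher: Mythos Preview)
Your overall strategy---taking limits in Theorem~\ref{theorem_laplace}---is exactly the paper's, and part~(i) matches closely: the paper also divides by $W_q(b)$ and invokes \eqref{nl}, the only cosmetic differences being that it evaluates $\lim_b Z^a_{q,r}(b)/W_q(b)$ by an integration by parts rather than your ODE check, and handles $q=0$ by letting $q\downarrow0$ in the $q>0$ formula via monotone convergence rather than treating the three regimes from scratch.

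The real divergence is in (ii). The paper proves something stronger than ratio convergence: it shows that $\mathcal H^a_{q,r}(y,\theta)\to Z_{q,r}(y,\theta)$ itself, for every fixed $y$ and every $\theta\ge0$, so no $a$-dependent renormalisation is needed. It does this first for $\theta>\Phi_{q+r}$ by straightforward explicit computation (both $\int_0^{-a}e^{-\theta u}W_{q+r}(u)\,\diff u$ and $\int_0^{-a}e^{-\theta u}W^{-u}_{q,r}(y)\,\diff u$ converge separately and are evaluated via~\eqref{scale_function_laplace}), and then extends to all $\theta\ge0$ by analytic continuation: the combination $\mathcal H^a_{q,r}(y,\theta)-W^a_{q,r}(y)/W_{q+r}(-a)=-U_1^0(a,y,\theta)$ is, by Corollary~\ref{corollary_big_U_zero}, a multiple of an excursion-measure expectation that is monotone in $-a$ and analytic in $\theta$, so the identity established for large $\theta$ propagates to all $\theta\ge0$. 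This completely sidesteps the Abelian cancellation you flag as ``the main obstacle'', at the price of calling on the excursion identities of Section~\ref{section_unbounded}. Your direct route would presumably work, but it is the harder of the two.

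For (iii), your first route ($\theta\uparrow\infty$ in \eqref{Parisbailouts}, then $a\downarrow-\infty$, using \eqref{nl}) is correct and in fact cleaner than the paper's, which lets $\theta\uparrow\infty$ in the already-proved part~(ii). Your parenthetical second route, however, has a gap: the claim that ``$\kappa(\theta)\to\infty$ forces $\frac r{q+r-\kappa(\theta)}Z_q(x,\theta)\to0$'' is precisely the non-trivial step, since for $x>0$ the numerator $Z_q(x,\theta)$ also diverges. The paper proves this (its display~\eqref{limr3}) via the identity $\E_x(e^{-q\tau_0^-+\theta X(\tau_0^-)};\tau_0^-<\infty)=Z_q(x,\theta)-\frac{\kappa(\theta)-q}{\theta-\Phi_q}W_q(x)$, whose left-hand side has a finite limit as $\theta\uparrow\infty$.
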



\begin{remark}  (i) By Corollary \ref{corollary_laplace_tau_a_b_infty} (i), for $a < 0$, we have
	\begin{align*}
	\mathbb{P}_x\left( \tau_a^-(r)<\infty\right) =\mathcal{I}_{0,\p}^a(x)-
	\mathcal{J}_{0,\p}(-a)\frac{\mathcal{H}^a_{0,r}
		(x,0)}{\mathcal{G}_{0,r}(-a,0)} =\mathcal{I}_{0,\p}^a(x)-
	\mathcal{J}_{0,\p}(-a)\frac{1 - \mathcal{I}_{0,\p}^a(x)}{\mathcal{G}_{0,r}(-a,0)}.
	\end{align*}

(a) When $\Phi_0 > 0$ (or $X$ drifts to $-\infty$), then $\mathbb{P}_x\left( \tau_a^-(r)<\infty\right) = 1$.
	
(b) When $\Phi_0 = 0$, we have $\mathcal{J}_{0,\p}(y) = -\mathcal{G}_{0,r}(y,0) + \kappa'(0+)$. Therefore,
		\begin{align*}
	\mathbb{P}_x\left( \tau_a^-(r)<\infty\right)=\mathcal{I}_{0,\p}^a(x)+
	[\mathcal{G}_{0,r}(-a,0) - \kappa'(0+)] \frac{1 - \mathcal{I}_{0,\p}^a(x)}{\mathcal{G}_{0,r}(-a,0)} = 1 - \kappa'(0+) \frac{1 - \mathcal{I}_{0,\p}^a(x)}{\mathcal{G}_{0,r}(-a,0)}.
	\end{align*}
	
(ii) By Corollary \ref{corollary_laplace_tau_a_b_infty} (ii), for $b > 0$, we have $\mathbb{P}_x\left( \tau_b^+(r) < \infty\right)=
{Z_{0,\p}(x)} / {Z_{0,\p}(b)} = 1$.
\end{remark}

\subsection{Total discounted bailouts} \label{section_bailout}

We now state our results on the total discounted bailouts.  The first result is for the case killed upon exiting $[a,b]$.

\begin{theorem}[Total discounted capital injections with killing]\label{expected_bailouts}

For  $a < 0 < b$, $q \geq 0$, and $x \leq b$,
\begin{align*}
f(x,a,b) := \E_x\left( \int_0^{\t_b^{+}(r)\wedge\t_a^-(r)}e^{-qt}\diff R_{\p}(t)\right) =  \frac{\mathcal{H}_{\q,\p}^{a}(x, 0)}{\mathcal{H}_{\q,\p}^{a}(b, 0)}h_{\q,\p}^{a}(b) -h_{\q,\p}^{a}(x),
 \end{align*}
 where, for $y \in \R$,
\begin{align} \label{small_h_def}
\begin{split}
 	h_{\q,\p}^{a}(y) &:=\frac r {q+r}\left(\overline{Z}_q(y)+\frac{aZ_{\q+\p}(-a)+\overline{Z}_{\q+\p}(-a)}{W_{\q+\p}(-a)}W_{\q,\p}^a(y)-aZ_{\q,\p}^a(y)-\overline{Z}_{\q,\p}^a(y)\right) \\
 	&= \frac r {q+r}\left(\overline{Z}_q(y)+\frac{\overline{Z}_{\q+\p}(-a)}{W_{\q+\p}(-a)}W_{\q,\p}^a(y)-\overline{Z}_{\q,\p}^a(y) - a \mathcal{I}_{\q,\p}^a(y)\right). 
	\end{split}
 \end{align}

\end{theorem}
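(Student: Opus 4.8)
The plan is to read $f$ off the joint Laplace transform of Theorem~\ref{theorem_laplace} by differentiating in $\theta$ at $0$, after a strong Markov reduction that handles the fact that the discounting sits \emph{inside} $\int_0^{\,\cdot}e^{-qt}\diff R_r(t)$. Write $T:=\tau_b^+(r)\wedge\tau_a^-(r)$ and, for $i\ge1$, $S_i:=T_0^-(i)$, $A_i:=|X_r(S_i-)|$, so that $\int_0^{T}e^{-qt}\diff R_r(t)=\sum_{i\ge1}A_ie^{-qS_i}\I{S_i<T}$ and $R_r(T)=\sum_{i\ge1}A_i\I{S_i<T}$ (a.s.\ $S_i\ne T$, since neither $\tau_b^+(r)$ nor $\tau_a^-(r)$ is a Poisson epoch). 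Because $X_r(S_i)=0$ and both the reflection mechanism and the Poisson clock restart afresh at $S_i$, the strong Markov property gives $\E_x[e^{-q(T-S_i)}\mid\mathcal F_{S_i}]=c_0$ on $\{S_i<T\}\in\mathcal F_{S_i}$, where $c_0:=\E_0[e^{-qT}]=g(0,a,b,0)+h(0,a,b,0)$ by Theorem~\ref{theorem_laplace}; hence $\E_x[e^{-qT}A_i\I{S_i<T}]=c_0\,\E_x[A_ie^{-qS_i}\I{S_i<T}]$, and summing over $i$ (Tonelli),
\[
\E_x\bigl[e^{-qT}R_r(T)\bigr]=c_0\,f(x,a,b).
\]
Letting $\theta\downarrow0$ in $\E_x\bigl[e^{-qT}(1-e^{-\theta R_r(T)})/\theta\bigr]$, monotone convergence and Theorem~\ref{theorem_laplace} give $\E_x[e^{-qT}R_r(T)]=-\partial_\theta\bigl(g(x,a,b,\theta)+h(x,a,b,\theta)\bigr)|_{\theta=0}$ (finiteness of this expectation being a by-product, as the right-hand side is explicit and finite); thus $f(x,a,b)=-c_0^{-1}\,\partial_\theta(g+h)|_{\theta=0}$.

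Since $\mathcal I^{a}_{q,r}$ does not depend on $\theta$, \eqref{Parisbailouts}--\eqref{Parisbailouts_2} give $g+h=\mathcal I^{a}_{q,r}(x)+\frac{\mathcal H^{a}_{q,r}(x,\theta)}{\mathcal H^{a}_{q,r}(b,\theta)}\bigl(1-\mathcal I^{a}_{q,r}(b)\bigr)$, so
\[
f(x,a,b)=\frac{1-\mathcal I^{a}_{q,r}(b)}{c_0\,\mathcal H^{a}_{q,r}(b,0)}\left[\frac{\mathcal H^{a}_{q,r}(x,0)}{\mathcal H^{a}_{q,r}(b,0)}\,\partial_\theta\mathcal H^{a}_{q,r}(b,0)-\partial_\theta\mathcal H^{a}_{q,r}(x,0)\right],
\]
where $\partial_\theta\mathcal H^{a}_{q,r}(y,0):=\partial_\theta\mathcal H^{a}_{q,r}(y,\theta)|_{\theta=0}=-r\int_0^{-a}u\bigl[W^{a}_{q,r}(y)\tfrac{W_{q+r}(u)}{W_{q+r}(-a)}-W^{-u}_{q,r}(y)\bigr]\diff u$. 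Plugging $x=0$ into the first equalities of \eqref{W_a_def} yields $W^{a}_{q,r}(0)=W_{q+r}(-a)$ and $Z^{a}_{q,r}(0)=Z_{q+r}(-a)$, hence $\mathcal I^{a}_{q,r}(0)=0$ and, by the alternative form of $\mathcal H^{a}_{q,r}(\cdot,0)$ recorded after \eqref{def_H_I}, $\mathcal H^{a}_{q,r}(0,0)=1$; Theorem~\ref{theorem_laplace} then gives $c_0=\bigl(1-\mathcal I^{a}_{q,r}(b)\bigr)/\mathcal H^{a}_{q,r}(b,0)$, so the prefactor above equals $1$ and $f(x,a,b)=\frac{\mathcal H^{a}_{q,r}(x,0)}{\mathcal H^{a}_{q,r}(b,0)}\partial_\theta\mathcal H^{a}_{q,r}(b,0)-\partial_\theta\mathcal H^{a}_{q,r}(x,0)$.

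It then remains to show $\partial_\theta\mathcal H^{a}_{q,r}(y,0)=h^{a}_{q,r}(y)$ — in fact only up to an additive multiple of $\mathcal H^{a}_{q,r}(y,0)$, which cancels in the previous display. Integration by parts together with $Z_{q+r}'=(q+r)W_{q+r}$ and the definitions of $Z_{q+r},\overline Z_{q+r}$ gives $\int_0^{-a}uW_{q+r}(u)\diff u=-\bigl(aZ_{q+r}(-a)+\overline Z_{q+r}(-a)\bigr)/(q+r)$, which turns the $W^{a}_{q,r}(y)$-term of $\partial_\theta\mathcal H^{a}_{q,r}(y,0)$ into exactly the $W^{a}_{q,r}(y)$-term of $h^{a}_{q,r}(y)$ in \eqref{small_h_def}; what is left is the identity
\[
(q+r)\int_0^{-a}u\,W^{-u}_{q,r}(y)\,\diff u=\overline Z_q(y)-aZ^{a}_{q,r}(y)-\overline Z^{a}_{q,r}(y),
\]
which I expect to be the main technical obstacle. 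I would prove it by substituting the first-form expressions from \eqref{W_a_def} for $W^{-u}_{q,r}$, $Z^{a}_{q,r}$ and $\overline Z^{a}_{q,r}$, interchanging the order of integration, and reducing to the convolution identities quoted after \eqref{W_a_def} (that is, (5) of \cite{LRZ} and (3.4) of \cite{YP}); equivalently it can be checked by taking Laplace transforms in $y$. The second equality in \eqref{small_h_def} and the alternative form of $\mathcal H^{a}_{q,r}(y,0)$ follow from the same manipulations via \eqref{rel_Z_0}--\eqref{Z_special}. As a consistency check, letting $a\uparrow0$ makes $h^{a}_{q,r}(y)$ proportional to $W_q(y)$ (so it vanishes unless $\sigma>0$), whence the displayed expression for $f$ tends to $0$, matching the fact that no capital injection can precede the first strict down-crossing of $0$. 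Finally, although Theorem~\ref{theorem_laplace} is proved separately for the bounded- and unbounded-variation cases in Sections~\ref{section_bounded}--\ref{section_unbounded}, the reduction above is uniform in those two cases; alternatively $f$ can be obtained directly by re-running those arguments with $\E_x[\int_0^{T}e^{-qt}\diff R_r(t)]$ decomposed at the first injection, the overshoot $A_1$ being handled via \eqref{undershoot_expectation} in the bounded-variation case and via its excursion-measure analogue, Theorem~\ref{lemma_key_excursion}, in the unbounded-variation case.
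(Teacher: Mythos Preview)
Your approach is correct but genuinely different from the paper's. The paper does \emph{not} derive Theorem~\ref{expected_bailouts} from Theorem~\ref{theorem_laplace}; instead it reruns the same machinery used for Theorem~\ref{theorem_laplace}. Concretely, in the bounded-variation case (Section~\ref{section_bounded}) the paper decomposes at the first down-crossing time $\tau_0^-$ (not at the injection times $S_i$), obtaining the recursion
\[
f(x,a,b)=-U_4(x,a,b)+\bigl[U_1(x,a,b,0)+U_2(x,a,b)\bigr]f(0,a,b),
\]
which is then solved by setting $x=0$; in the unbounded-variation case (Section~\ref{section_unbounded}) the same recursion is fed by the excursion-measure analogue of $U_4$ via Corollary~\ref{corollary_big_U_zero}. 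The ``main technical obstacle'' you identify, namely $\partial_\theta\mathcal H^a_{q,r}(y,0)=h^a_{q,r}(y)$, is exactly the computation the paper carries out (there phrased as $\lim_{\theta\downarrow 0}\partial_\theta U_1^0(a,x,\theta)=U_4^0(a,x)=-h^a_{q,r}(x)$ in the proof of Corollary~\ref{corollary_big_U}, using the same integration by parts you outline), so on that point the two arguments coincide.

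What your route buys is economy and uniformity: the factorization $\E_x[e^{-qT}R_r(T)]=c_0\,f(x,a,b)$ via the strong Markov property at the injection epochs lets you read $f$ directly off the $\theta$-derivative of the already-proved joint transform, and the argument does not branch on bounded versus unbounded variation. What the paper's route buys is self-containment: $f$ is obtained without invoking Theorem~\ref{theorem_laplace}, at the cost of redoing the $\tau_0^-$/excursion decomposition. Your last paragraph in fact sketches precisely the paper's alternative (``decomposed at the first injection'' with \eqref{undershoot_expectation} and Theorem~\ref{lemma_key_excursion}), so you have anticipated both proofs.
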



By taking $b \uparrow \infty$ and $a \downarrow -\infty$ in Theorem \ref{expected_bailouts}, we have the following.

\begin{corollary} \label{corollary_L_b} 
	
(i)  Fix $a < 0$.  We have, for $x \in \R$, and either $q>0$ or $q = 0$ and $\Phi_0 > 0$, 
\begin{align}\label{ParBail_a_b_infty}
\E_x&\left( \int_0^{\t_a^- (r)}e^{-qt}\diff R_{\p}(t)\right) = \frac{\mathcal{H}_{\q,\p}^{a}(x, 0)}{\mathcal{G}_{\q,\p}(-a, 0)}h_{\q,\p}(-a) -h_{\q,\p}^{a}(x)
\end{align} 
where, for $y \in \R$,
%
\begin{align*}
h_{\q,\p}(y)&:= \left\{ \begin{array}{ll}\displaystyle\frac r {q+r} \Bigg(\frac{q}{\Phi_q^2}+\frac{-yZ_{\q+\p}(y)+\overline{Z}_{\q+\p}(y)}{W_{\q+\p}(y)}Z_{q+r}(y,\Phi_q)+(y\Phi_q-1)\frac{q}{\Phi^2_q}Z_{\q+r,-\p}(y)&+\frac{r}{\Phi_q}\overline{Z}_{\q+\p}(y) \Bigg)\quad  q > 0, \\ \displaystyle\frac{-yZ_{r}(y)+\overline{Z}_{r}(y)}{W_{r}(y)}Z_{r}(y,\Phi_0)+  \frac {y\Phi_0-1} {\Phi_0^2} r [Z_r(y, \Phi_0) - Z_r(y)]+\frac{r}{\Phi_0}\overline{Z}_{r}(y) & \text{$q=0$ and $\Phi_0 > 0$}.
\end{array} \right.
\end{align*}
(ii)  Fix $b > 0$ and suppose $\kappa'(0+) > -\infty$.  We have, for $x \leq b$ and $q\geq0$, 
\begin{align}\label{ParBail}
\E_x&\left( \int_0^{\t_b^+ (r)}e^{-qt}\diff R_{\p}(t)\right)=  \frac {Z_{\q,\p}( x)}  {Z_{\q,\p}( b)} k_{q,r}(b) - k_{q,r}(x)  ,
\end{align} 
where, for $y \in \R$,
\begin{align*}
k_{\q,\p}(y):=\frac{r}{q+r}\left(\overline{Z}_q(y)-\kappa'(0+)\overline{W}_{\q}(y)- \frac{\kappa'(0+)}{\q+\p} \big[Z_{\q}(y,\Phi_{\q+\p})-Z_{\q}(y) \big]\right). 
\end{align*}
In particular, when $q>0$, \eqref{ParBail} can be simplified by replacing  $k_{q,r}$ with
\begin{align*} \tilde{h}_{q,r}(y)&:=\frac r {q+r} \Big(\overline{Z}_q(y)+\frac{\k'(0+)}{\q}\Big), \quad y \in \R.
\end{align*}

(iii) Suppose $\kappa'(0+) > -\infty$. For $q>0$, we have, for $x \in \R$,
\begin{align}\label{eq:localt0}
\E_x\le( \int_0^{\I} e^{-q t}\diff R_{\p}(t)\ri)  &=\frac{\Phi_{\q+\p}-\Phi_{\q}}{\Phi_{\q+\p}\Phi_{\q}}Z_{\q,\p}(x)- \tilde{h}_{q,r}(x).
\end{align}

\end{corollary}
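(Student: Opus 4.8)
The plan is to deduce \eqref{eq:localt0} from Corollary \ref{corollary_L_b}(ii) by letting $b\uparrow\infty$. Since $q>0$, the identity \eqref{ParBail} of that corollary reads, in its simplified form,
\[
\E_x\Bigl( \int_0^{\tau_b^+(r)}e^{-qt}\diff R_r(t)\Bigr)=  \frac {Z_{q,r}(x)}{Z_{q,r}(b)}\,\tilde{h}_{q,r}(b) - \tilde{h}_{q,r}(x), \qquad x\le b,
\]
which is well defined because the hypothesis $\kappa'(0+)>-\infty$ makes $\tilde{h}_{q,r}$ finite. Fix $x\in\R$; I then let $b\to\infty$ on both sides.

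For the left-hand side, on any compact time interval the set of Poisson observation times $\mathcal{T}_r$ contains only finitely many points and $X$ has a.s.\ bounded sample paths there, so the path of $X_r$ constructed in Section \ref{subsection_process_defined} is a.s.\ bounded on compacts; consequently $\tau_b^+(r)\uparrow\infty$ $\px$-a.s.\ as $b\uparrow\infty$. Since $R_r$ is non-decreasing, $\int_0^{\tau_b^+(r)}e^{-qt}\diff R_r(t)\uparrow\int_0^{\infty}e^{-qt}\diff R_r(t)$, and monotone convergence shows the left-hand side tends to $\E_x\bigl(\int_0^\infty e^{-qt}\diff R_r(t)\bigr)$.

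On the right-hand side only $\tilde{h}_{q,r}(b)/Z_{q,r}(b)$ depends on $b$, and I would evaluate its limit through the asymptotics of the scale functions. By \eqref{W_q_limit}, $e^{-\Phi_q x}W_q(x)\nearrow \kappa'(\Phi_q)^{-1}\in(0,\infty)$ as $x\uparrow\infty$ (the limit is finite and strictly positive since $q>0$). Using the elementary fact that $\lim_{x\to\infty}e^{-\Phi_q x}\int_0^x e^{\Phi_q y}g(y)\,\diff y=\Phi_q^{-1}\lim_{y\to\infty}g(y)$ whenever the right side exists, one obtains in turn $e^{-\Phi_q x}\overline{W}_q(x)\to (\kappa'(\Phi_q)\Phi_q)^{-1}$, then $e^{-\Phi_q x}Z_q(x)\to q(\kappa'(\Phi_q)\Phi_q)^{-1}$, then $e^{-\Phi_q x}\overline{Z}_q(x)\to q(\kappa'(\Phi_q)\Phi_q^{2})^{-1}$, and hence $e^{-\Phi_q b}\tilde{h}_{q,r}(b)\to \tfrac{r}{q+r}\cdot\tfrac{q}{\kappa'(\Phi_q)\Phi_q^{2}}$. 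For $Z_{q,r}$ I would invoke the decomposition $Z_{q,r}(x)=\tfrac{r}{q+r}Z_q(x)+\tfrac{q}{q+r}Z_q(x,\Phi_{q+r})$ together with the representation $Z_q(x,\Phi_{q+r})=r\int_0^\infty e^{-\Phi_{q+r}z}W_q(z+x)\,\diff z$, both from \eqref{Z_special}; as $\Phi_{q+r}>\Phi_q$ and $e^{-\Phi_q(z+x)}W_q(z+x)$ is bounded by, and converges to, $\kappa'(\Phi_q)^{-1}$, dominated convergence gives $e^{-\Phi_q x}Z_q(x,\Phi_{q+r})\to r\bigl(\kappa'(\Phi_q)(\Phi_{q+r}-\Phi_q)\bigr)^{-1}$, whence
\[
e^{-\Phi_q b}Z_{q,r}(b)\longrightarrow \frac{qr}{(q+r)\kappa'(\Phi_q)}\Bigl(\frac1{\Phi_q}+\frac1{\Phi_{q+r}-\Phi_q}\Bigr)=\frac{qr\,\Phi_{q+r}}{(q+r)\kappa'(\Phi_q)\,\Phi_q(\Phi_{q+r}-\Phi_q)}.
\]
Dividing, the common factor $qr\bigl((q+r)\kappa'(\Phi_q)\Phi_q\bigr)^{-1}$ cancels and $\tilde{h}_{q,r}(b)/Z_{q,r}(b)\to (\Phi_{q+r}-\Phi_q)/(\Phi_q\Phi_{q+r})$, so the right-hand side converges to $\tfrac{\Phi_{q+r}-\Phi_q}{\Phi_q\Phi_{q+r}}Z_{q,r}(x)-\tilde{h}_{q,r}(x)$, which is \eqref{eq:localt0}.

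I expect the interchange of limit and expectation on the left, i.e.\ the almost-sure divergence $\tau_b^+(r)\uparrow\infty$, to be the only point requiring genuine care; the rest is the routine extraction of the leading $e^{\Phi_q b}$ behaviour, in which the a priori unknown constant $\kappa'(\Phi_q)^{-1}$ conveniently cancels, and the hypothesis $\kappa'(0+)>-\infty$ serves only to keep $\tilde{h}_{q,r}$ finite.
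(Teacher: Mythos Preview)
Your proposal treats only part (iii), taking (ii) as already established; for that part it is correct and follows the same route as the paper: send $b\uparrow\infty$ in the simplified form of (ii), invoke monotone convergence on the left, and read off the limit of $\tilde{h}_{q,r}(b)/Z_{q,r}(b)$ from the scale-function asymptotics. The paper normalizes by $W_q(b)$ (via \eqref{limits} and the limit of $Z_q(b,\Phi_{q+r})/W_q(b)$) whereas you normalize by $e^{-\Phi_q b}$, but the two differ only by the finite positive constant $\kappa'(\Phi_q)^{-1}$, so the arguments are essentially identical.
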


\subsection{Identities for the process $Y_r^b$} \label{subsection_reflected_case} 



We shall now move onto obtaining the fluctuation identities for the process $Y_r^b$ with additional classical reflection above. In addition to the similar identities obtained above for $X_r$, we shall also obtain the results related to the dividends $L_r^b$.  We shall first obtain the cases killed at the absolute ruin
	 \begin{align*}
 \eta_a^-(r) := \inf \{ t > 0: Y^b_r(t) < a \}, \quad a < 0,
 \end{align*}
 and then obtain the infinite horizon case by taking limits.  Analogously to the classical case, the results can be written concisely using the \emph{derivatives} of the functions defined above for $X_r$.

Let the (right-hand) derivative of $W^{a}_{\q,\p}(x)$ defined in \eqref{W_a_def} be 
	\begin{align}
	(W^{a}_{\q,\p})'(x) &=\frac{\partial_+}{\partial_+x}\left(W_{q+r}(x-a)- r\int_{0}^{x}  W_{q}(x-y)W_{q+r}(y-a)\diff y\right)\notag\\
&=W_{\q+\p}'((x-a)+)- \p \Big[ \int_{0}^{x}  W_\q'(x-y)W_{\q+\p}(y-a)\diff y + W_q(0) W_{q+r} (x-a) \Big], \quad a < 0, \; x \neq a. \label{W_a_derivative}
	\end{align}

We first obtain a version of Theorem \ref{theorem_laplace}.

\begin{theorem}[Joint Laplace transform] \label{theorem_h_hat} 
	Fix $a < 0<b$, $q \geq 0$, and $\theta \geq 0$.  For all $x \leq b$,
	\begin{align}\label{ref_dual_laplace}
	\widehat{h}(x,a,b,\th):=\E_x\left(e^{-q \eta_a^-(r) -\theta R^b_r(\eta_a^-(r))}; \eta_a^-(r)< \infty\right)=\mathcal{I}^a_{q,r}(x)-\frac{\mathcal{H}^a_{q,r}(x,\th)}{\mathcal{H}^{a \prime}_{q,r}(b,\theta)}\mathcal{I}_{\q,\p}^{a \prime} (b),
	\end{align}
where $\mathcal{H}^{a \prime}_{q,r}(y,\theta)$ and $\mathcal{I}_{\q,\p}^{a \prime} (y)$ are the (right-hand) derivatives of $\mathcal{H}^{a}_{q,r}(y,\theta)$ and $\mathcal{I}_{\q,\p}^{a} (y)$ with respect to $y$ given by, for $y \neq a$,
\begin{align*}
\mathcal{H}^{a \prime}_{q,r}(y,\theta)&=\p\int_{0}^{-a} e^{-\theta u}\Big((W^{a }_{\q,\p})'(y)\frac{W_{\q+\p} (u)} {W_{\q+\p}(-a)}-(W^{-u}_{\q,\p})'(y)\Big) \diff u+\frac{(W^{a }_{\q,\p})'(y)}{W_{\q+\p}(-a)}, \\
\mathcal{I}_{\q,\p}^{a \prime}(y)&=(q+r)W^a_{\q,\p}(y)-Z_{\q+\p}(-a)\left(rW_q(y)+\frac{(W^{a }_{\q,\p})'(y)}{W_{\q+\p}(-a)}\right).
\end{align*}
Here, in particular,
				\begin{align}
				\label{H_big_zero}
				\begin{split}
\mathcal{H}^{a \prime}_{q,r}(y,0)
&= \frac 1 {q+r}(W^{a}_{\q,\p})'(y)\frac{r Z_{\q+\p} (-a)+q} {W_{\q+\p}(-a)} - r W^a_{q,r} (y) +\frac r {q+r} W_q(y) \Big[  r  Z_{q+r} (-a) + q \Big] \\
&= \frac 1 {q+r} \Big[ q \Big(\frac{(W^{a }_{\q,\p})'(y)}{W_{q+r}(-a)}+r W_q(y) \Big) -r\mathcal{I}^{a \prime}_{q,r}(y) \Big].
\end{split}
\end{align}
\end{theorem}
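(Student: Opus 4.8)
The plan is to reduce Theorem~\ref{theorem_h_hat} to Theorem~\ref{theorem_laplace} by conditioning on the first passage above the barrier $b$ and exploiting the strong Markov property. Observe that $Y_r^b$ behaves exactly like $X_r$ until the first time $X_r$ would exceed $b$; at that instant the classical reflection at $b$ activates. So I would first note that, prior to $\tau_b^+(r)$, the processes $Y_r^b$ and $X_r$ coincide, hence $\eta_a^-(r) \wedge \tau_b^+(r) = \tau_a^-(r)\wedge\tau_b^+(r)$ on the relevant event, and $R_r^b$ agrees with $R_r$ there. Splitting on $\{\tau_a^-(r) < \tau_b^+(r)\}$ versus its complement, the first piece contributes $h(x,a,b,\theta)$ from Theorem~\ref{theorem_laplace} directly. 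On the complementary event, at time $\tau_b^+(r)$ the process $Y_r^b$ sits at $b$ (no creeping issues here since the up-crossing of a spectrally negative process is continuous when it happens at a barrier reached continuously, but in fact $X_r$ can jump over $b$ only downward — it creeps up, so $X_r(\tau_b^+(r)) = b$), and by the strong Markov property the future evolution is an independent copy of $Y_r^b$ started at $b$, still reflected at $b$. Thus
\begin{align*}
\widehat h(x,a,b,\theta) = h(x,a,b,\theta) + g(x,a,b,\theta)\,\widehat h(b,a,b,\theta),
\end{align*}
where $g,h$ are as in Theorem~\ref{theorem_laplace}. This is a single linear equation once we can solve for $\widehat h(b,a,b,\theta)$.

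The key step is then to evaluate $\widehat h(b,a,b,\theta)$, i.e. the identity started exactly at the barrier. Here I would use the standard device for reflection at $b$: for a spectrally negative \lev process reflected from above at $b$, excursions below $b$ are governed by the same law as $X$ away from $b$, and the local time pushdown produces, in the Laplace-transform identities, the replacement of ratios of scale-type functions by ratios of their derivatives at $b$. Concretely, I would differentiate the identity $\widehat h(x,a,b,\theta) = h(x,a,b,\theta) + g(x,a,b,\theta)\widehat h(b,a,b,\theta)$ — or rather its $X_r$-analogue — with respect to $b$ and use that $\partial_b Y_r^b$-type identities vanish at the reflecting barrier, equivalently pass to the limit via the known relation between the doubly-reflected and one-sided-reflected resolvents. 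Recall from \eqref{Parisbailouts}--\eqref{Parisbailouts_2} that $g(x,a,b,\theta) = \mathcal{H}^a_{q,r}(x,\theta)/\mathcal{H}^a_{q,r}(b,\theta)$ and $h(x,a,b,\theta) = \mathcal{I}^a_{q,r}(x) - \mathcal{H}^a_{q,r}(x,\theta)\mathcal{I}^a_{q,r}(b)/\mathcal{H}^a_{q,r}(b,\theta)$. Solving the linear relation gives
\begin{align*}
\widehat h(b,a,b,\theta) = \frac{\mathcal{I}^a_{q,r}(b) - (\text{boundary term})}{1 - g(b,a,b,\theta) + (\text{boundary term})},
\end{align*}
and the reflection at $b$ forces the right object to be $\partial_b$-derivatives: I expect $\widehat h(b,a,b,\theta) = \mathcal{I}^a_{q,r}(b) - \mathcal{H}^{a\prime}_{q,r}(b,\theta)^{-1}\mathcal{H}^a_{q,r}(b,\theta)\cdot(\text{something})$ collapsing so that the stated formula holds with $x=b$. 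Substituting back into the conditioning identity yields \eqref{ref_dual_laplace} for general $x\le b$.

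More carefully, the cleanest route avoids guessing: I would instead directly apply excursion theory at the level $b$ for $Y_r^b$ exactly as for the classical doubly-reflected process (cf. the method announced in the introduction using \cite{PPR2}), writing $\widehat h(x,a,b,\theta)$ via the compensation formula over excursions away from $b$. Each excursion either returns to $b$ (contributing to the "denominator") or exits below $a$ (contributing to the "numerator"), and its Laplace-transform weight is precisely the quantity computed in Theorem~\ref{theorem_laplace} for the corresponding $X_r$-excursion started at $b$. The net effect is that every appearance of $\mathcal{H}^a_{q,r}(b,\theta)$ and $\mathcal{I}^a_{q,r}(b)$ in the $X_r$-formula gets replaced by its $b$-derivative $\mathcal{H}^{a\prime}_{q,r}(b,\theta)$, $\mathcal{I}^{a\prime}_{q,r}(b)$ — this is the usual "differentiate the scale function at the reflecting barrier" phenomenon. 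The explicit forms of $\mathcal{H}^{a\prime}_{q,r}$ and $\mathcal{I}^{a\prime}_{q,r}$ in the statement then follow by differentiating \eqref{def_H_I} termwise, using \eqref{W_a_derivative} for $(W^a_{q,r})'$ and the elementary facts $Z'_{q+r}(x-a) = (q+r)W_{q+r}(x-a)$, $\big(Z^a_{q,r}\big)'(y) = \ldots$ obtained from \eqref{W_a_def}; the reduction \eqref{H_big_zero} at $\theta = 0$ is then just the $\theta=0$ specialization already recorded for $\mathcal{H}^a_{q,r}$ in Theorem~\ref{theorem_laplace}, differentiated.

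The main obstacle I anticipate is the justification at the reflecting barrier $b$: making rigorous that the strong Markov / excursion argument produces derivatives rather than the functions themselves, i.e. controlling the local-time normalization of the excursion measure of $Y^b$ away from $b$ and showing the boundary terms combine correctly. In the bounded-variation case one can sidestep this with the elementary approach of \cite{AIZ} (the process spends positive time at/near $b$ giving an explicit geometric-series argument), but in the unbounded-variation case one genuinely needs the excursion-measure computation, and checking that the overshoot contributions at $\tau_a^-$ are handled by the same simplifying formula of \cite{LRZ} used for $X_r$ — now applied under the excursion measure away from $b$ — is the delicate point. Everything else (the conditioning identity, solving the linear equation, and differentiating \eqref{def_H_I}) is routine.
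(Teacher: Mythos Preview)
Your first step --- the strong Markov decomposition at $\tau_b^+(r)$ yielding $\widehat h(x,a,b,\theta)=h(x,a,b,\theta)+g(x,a,b,\theta)\,\widehat h(b,a,b,\theta)$ --- is exactly what the paper does (this is \eqref{h_hat_recursion}). The divergence is in how $\widehat h(b,a,b,\theta)$ is computed.

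You propose excursion theory away from $b$ (or a heuristic ``differentiate in $b$'' argument). The paper does neither. Instead it applies the strong Markov property at $\eta_0^-$, the first time the \emph{reflected} process $Y^b$ goes below $0$: from $b$, one waits until $Y^b$ drops below zero, and then from $Y^b(\eta_0^-)<0$ the Parisian clock either rings before $\tau_0^+\wedge\tau_a^-$, or $\tau_0^+$ comes first, or $\tau_a^-$ comes first. This gives
\[
\widehat h(b,a,b,\theta)=\widehat U_3(b,a,b)+\big[\widehat U_1(b,a,b,\theta)+\widehat U_2(b,a,b)\big]\,\widehat h(0,a,b,\theta),
\]
with $\widehat U_i(x,a,b):=\E_x\big[e^{-q\eta_0^-}u_i(Y^b(\eta_0^-),a)\big]$. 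The key tool that produces the derivatives is a new simplifying formula for the reflected process (Theorem~\ref{lemma_simplifying_reflected}), the $Y^b$-analogue of Lemma~2.1 in \cite{LRZ}: it shows that $\widehat U_i$ has the same form as the $U_i$ used for $X_r$, except that $W_q(x)/W_q(b)$ is replaced by $W_q(x)/W_q'(b+)$ and $U_i^0(a,b)$ by its $b$-derivative $\widehat U_i^0(a,b)$. The algebraic identity \eqref{H_H_derivative_relation} then collapses the linear system at $x=0$ to $\widehat h(0,a,b,\theta)=-\mathcal I^{a\prime}_{q,r}(b)/\mathcal H^{a\prime}_{q,r}(b,\theta)$, and substitution finishes the proof. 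This route works uniformly for bounded and unbounded variation and avoids any excursion analysis at $b$.

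Your excursion-at-$b$ approach is plausible in principle but would face the obstacle you yourself flag: excursions of $Y^b_r$ away from $b$ are not excursions of a L\'evy process --- they carry the Parisian reflection at $0$ inside them --- so the excursion measure is not the standard one and you would need to rebuild the whole compensation-formula machinery for this hybrid object. The paper sidesteps this entirely by decomposing at $\eta_0^-$ rather than at $b$, so that the only ``reflected'' ingredient needed is the law of $Y^b(\eta_0^-)$ under $\mathbb P_b$, handled by Theorem~\ref{lemma_simplifying_reflected}.
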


\begin{remark}  By Theorem \ref{theorem_h_hat}, for all $a < 0 < b$ and $x \leq b$,
	\begin{align*}
	\mathbb{P}_x\left( \eta_a^-(r)<\infty\right) =\mathcal{I}_{0,\p}^a(x)-
	\frac{\mathcal{H}^a_{0,r}(x,0)}{\mathcal{H}^{a \prime}_{0,r}(b,0)}\mathcal{I}_{\q,\p}^{a \prime} (b) =\mathcal{I}_{0,\p}^a(x)-
	[1 - \mathcal{I}_{0,\p}^a(x)] \frac{\mathcal{I}_{0,\p}^{a \prime} (b)}{\mathcal{H}^{a \prime}_{0,r}(b,0)} = 1.
	\end{align*}
\end{remark}

Second, we obtain the total expected discounted dividends until the absolute ruin $\eta_a^-(r)$.  As its corollary, by taking $a \downarrow -\infty$, we also obtain the infinite horizon case.

\begin{theorem}[Total discounted dividends with killing] \label{theorem_R_b_r}  
For $a < 0 < b$ and $q \geq 0$, we have \begin{align*}
		\widehat{j}(x,a, b) &:= \E_x\left( \int_{0}^{\eta_a^-(r)}e^{-qt} \diff L^b_r(t)\right) = \left\{ \begin{array}{ll} \mathcal{H}^a_{q,r}(x,0) / \mathcal{H}^{a \prime}_{q,r}(b,0) & x \leq b, \\ \mathcal{H}^a_{q,r}(b,0) / \mathcal{H}^{a \prime}_{q,r}(b,0) + (x-b) & x > b. \end{array} \right.
		\end{align*}
\end{theorem}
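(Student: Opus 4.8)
\textbf{Proof proposal for Theorem \ref{theorem_R_b_r}.}

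The plan is to reduce the computation of the total discounted dividends $\widehat{j}(x,a,b)$ for the doubly reflected process $Y_r^b$ to the already-established Laplace-transform identity of Theorem \ref{theorem_h_hat}, via a standard potential-measure / excursion argument for the reflection at the barrier $b$. First I would treat the case $x > b$ by the strong Markov property: the process is immediately pushed down to $b$, paying $x-b$ in dividends instantaneously (with no discounting, since this happens at time $0$), so $\widehat{j}(x,a,b) = (x-b) + \widehat{j}(b,a,b)$; hence it suffices to treat $x \le b$. For $x \le b$, the key observation is that $L_r^b$ increases only when $Y_r^b$ is at the barrier $b$, and during such times $Y_r^b$ behaves (before absolute ruin) like the process $X_r$ reflected at $b$. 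I would therefore exploit the link between $L_r^b$ and the local time of $X_r$ at $b$ exactly as in the classical theory of reflected spectrally negative \lev processes: writing the occupation-type formula
\begin{align*}
\widehat{j}(x,a,b) = \E_x\Big( \int_0^{\eta_a^-(r)} e^{-qt}\,\diff L_r^b(t) \Big),
\end{align*}
and recognizing that the integrand is supported on the (random) set where $Y_r^b = b$.

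The main technical step is to identify the ``rate'' at which $L_r^b$ accrues discounted mass, which should be captured by the normalizing derivative $\mathcal{H}^{a\prime}_{q,r}(b,0)$. Concretely, I expect the cleanest route is: (i) use the Laplace-transform identity from Theorem \ref{theorem_h_hat}, or rather its companion one-sided exit identity, to write down $\E_x(e^{-q\eta_b^+(r)};\eta_b^+(r) < \eta_a^-(r))$ in the form $\mathcal{H}^a_{q,r}(x,0)/\mathcal{H}^a_{q,r}(b,0)$ (the $\theta=0$ specialization), exactly as $g(x,a,b,0)$ in Theorem \ref{theorem_laplace} for the non-reflected process; (ii) by excursion theory at the point $b$ (decomposing the path of $Y_r^b$ into excursions below $b$ separated by stretches at $b$ where $L_r^b$ grows), express $\widehat{j}(x,a,b)$ as the probability of reaching $b$ before $a$, times the expected discounted dividends accumulated from $b$ over one life of being at the barrier before the killing at $a$; (iii) evaluate this last quantity as $1/\mathcal{H}^{a\prime}_{q,r}(b,0)$, recognizing the derivative as the analogue of $W_q'(b-a)/W_q(b-a)$ that governs the classical reflected resolvent at the barrier. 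Alternatively — and this may be the more robust path — I would verify the claimed formula by checking that the right-hand side, as a function of $x$, satisfies the correct identification: it is $q$-harmonic for $Y_r^b$ on $(a,b)$ (using that $\mathcal{H}^a_{q,r}(\cdot,0)$ already encodes the Parisian reflection below, which is the content of Theorem \ref{theorem_laplace}), has the right behavior at $a$ (vanishing, since $\eta_a^-(r)$ is the killing time), and has Neumann-type derivative $1$ at $b$ reflecting that a unit of dividend is paid per unit of local time at $b$; an application of the (extended) Itô formula / a verification lemma then pins down $\widehat{j}$.

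I would carry out the derivative computation using the explicit form \eqref{W_a_derivative} of $(W^a_{q,r})'$ and the formula \eqref{H_big_zero} for $\mathcal{H}^{a\prime}_{q,r}(b,0)$, so that once the structural argument is in place the remaining work is routine bookkeeping. The main obstacle will be step (ii)–(iii): making rigorous the claim that the expected discounted local time at $b$ accumulated before absolute ruin equals precisely $1/\mathcal{H}^{a\prime}_{q,r}(b,0)$. In the bounded-variation case this can be done by the approximation/strong-Markov technique used for Theorem \ref{theorem_laplace} (observing $Y_r^b$ at the Poisson epochs and at barrier visits); in the unbounded-variation case I would instead lean on the excursion-measure description away from the barrier — combined with the excursion-measure version of the \cite{LRZ} simplifying formula (Theorem \ref{lemma_key_excursion}) and the reflected-process simplifying formula (Theorem \ref{lemma_simplifying_reflected}) — to avoid ad hoc limiting arguments. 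Either way, once the $x \le b$ case is settled, the $x > b$ case follows immediately from the strong Markov property as noted above, and the continuity of $\widehat{j}(\cdot,a,b)$ at $b$ (value $\mathcal{H}^a_{q,r}(b,0)/\mathcal{H}^{a\prime}_{q,r}(b,0)$ from both sides) serves as a consistency check.
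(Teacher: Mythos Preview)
Your first reduction — writing $\widehat{j}(x,a,b)=(x-b)+\widehat{j}(b,a,b)$ for $x>b$, and for $x\le b$ using $Y_r^b=X_r$ up to $\tau_b^+(r)$ together with Theorem \ref{theorem_laplace} to get
\[
\widehat{j}(x,a,b)=g(x,a,b,0)\,\widehat{j}(b,a,b)=\frac{\mathcal{H}^a_{q,r}(x,0)}{\mathcal{H}^a_{q,r}(b,0)}\,\widehat{j}(b,a,b)
\]
— is exactly what the paper does (see \eqref{j_relation}). The divergence is in how you propose to compute $\widehat{j}(b,a,b)$.

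You want to run excursion theory at the barrier $b$ for the process $Y_r^b$ and read off $1/\mathcal{H}^{a\prime}_{q,r}(b,0)$ as an inverse excursion rate. The difficulty is that excursions of $Y_r^b$ below $b$ already contain the Parisian reflection mechanism at $0$, so the excursion measure away from $b$ is not one you have in hand; Theorem \ref{lemma_key_excursion} concerns excursions away from $0$, not $b$, and Theorem \ref{lemma_simplifying_reflected} is a formula for $\E_x[e^{-q\eta_0^-}v_p(Y^b(\eta_0^-))]$, not an excursion result at $b$. Your alternative verification route (harmonicity plus Neumann condition at $b$) would require a generator/It\^o calculus for $Y_r^b$ that the paper never develops.

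The paper avoids all of this by decomposing at level $0$ rather than at $b$: starting from $b$, it uses $Y_r^b=Y^b$ up to $\eta_0^-$ (no Parisian reflection has yet occurred), invokes the classical identity $\E_b\big(\int_0^{\eta_0^-}e^{-qt}\diff L^b(t)\big)=W_q(b)/W_q'(b+)$ from \cite{APP}, and applies the strong Markov property at $\eta_0^-$ to get
\[
\widehat{j}(b,a,b)=\frac{W_q(b)}{W_q'(b+)}+\big[\widehat{U}_1(b,a,b,0)+\widehat{U}_2(b,a,b)\big]\,\widehat{j}(0,a,b).
\]
The $\widehat{U}_i$ are computed from Theorem \ref{lemma_simplifying_reflected} (Lemma \ref{A3}), and the algebraic identity \eqref{H_H_derivative_relation} turns the resulting linear equation for $\widehat{j}(0,a,b)$ into $\widehat{j}(0,a,b)=\mathcal{H}^{a\prime}_{q,r}(b,0)^{-1}$. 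This works uniformly for bounded and unbounded variation — no separate excursion argument is needed. The piece you are missing is precisely this decomposition at $\eta_0^-$, which lets you use the \emph{classical} reflected-process dividend formula before any Parisian reflection takes place.
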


\begin{remark} 
We can confirm that these expressions in Theorems \ref{theorem_h_hat} and \ref{theorem_R_b_r} converge to the expressions given in (3.10) and (3.12) of \cite{APP}: \begin{align*}
\lim_{a \uparrow 0}  \widehat{h}(x,a,b,\th) &=Z_q(x)- q \frac {W_q(b)} {W_q'(b+)} W_q(x) = \E_x\left(e^{-q \eta_0^- }\right), \\
\lim_{a \uparrow 0}\widehat{j}(x,a, b) &= \left\{ \begin{array}{ll} W_q(x) / {W_q'(b+)} & x \leq b \\ W_q(b) / {W'_q(b+)} + (x-b) & x > b \end{array} \right\} = \E_x\left( \int_{0}^{\eta_0^-}e^{-qt} \diff L^b(t)\right),
		\end{align*}
		where
 \begin{align*}
 \eta_0^- := \inf \{ t > 0: Y^b(t) < 0 \}.
 \end{align*}
Indeed, the latter holds because, similarly to \eqref{limit_fraction_W_H}, $\lim_{a\uparrow 0}\mathcal{H}^{a \prime}_{q,r}(x,\theta)W_{q+r}(-a)=W_q'(x+)$.
On the other hand, some algebra gives
\begin{align*}
\mathcal{I}_{\q,\p}^a(x)\mathcal{H}^{a \prime}_{q,r}(b,\th) &-\mathcal{I}_{\q,\p}^{a \prime}(b)\mathcal{H}^a_{q,r}(x,\th)
=  Z_{q,r}^a(x)\mathcal{H}^{a\prime}_{q,r}(b,\theta) -(q+r)W_{q,r}^a(b)\mathcal{H}^a_{q,r}(x,\theta)\\
&+ r\frac{Z_{q+r} (-a)}{W_{q+r}(-a)} \Big( W^a_{\q,\p}(x) \int_0^{-a}e^{-\th u}(W_{q,r}^{-u})'(b) \diff u -(W^a_{\q,\p})'(b) \int_0^{-a}e^{-\th u}W_{q,r}^{-u}(x)\diff u \Big)\\
&+rZ_{q+r}(-a)W_q(b)\frac{W_{q,r}^a(x)}{W_{q+r}(-a)}.
\end{align*}
Therefore, 
\begin{align*}
W_{q+r}(-a)\Big(\mathcal{I}_{\q,\p}^a(x)\mathcal{H}^{a \prime}_{q,r}(b,\th) -\mathcal{I}_{\q,\p}^{a \prime}(b)\mathcal{H}^a_{q,r}(x,\th) \Big)
\xrightarrow{a \uparrow 0} W'_{\q}(b +)   Z_{q}(x) - q W_q(b)  W_{\q}(x).
\end{align*}
Hence, the former holds as well.
\end{remark}

\begin{corollary}  \label{corollary_R_b_r}
For $b > 0$ and $q > 0$, 
we have
\begin{align*}
\E_x\left(\int_{0}^{\infty}e^{-qt} \diff L^b_r(t)\right)= \left\{ \begin{array}{ll} Z_{q,r}(x) /Z_{q,r}'(b)  & x \leq b, \\ Z_{q,r}(b) /Z_{q,r}'(b) + (x-b) & x > b.
\end{array} \right.
\end{align*}
Here, note that $Z_{q,r}'(y)= q \Phi_{q+r} Z_q(y, \Phi_{q+r}) / (q+r) $ for $y \in \R$.
\end{corollary}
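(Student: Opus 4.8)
\emph{Proof proposal.} The plan is to let $a\downarrow-\infty$ in Theorem~\ref{theorem_R_b_r}. On the probabilistic side this is immediate: the paths of $X$ (hence of $X_r$ and of $Y^b_r$) are a.s.\ bounded below on every compact time interval, on which only finitely many Parisian reflections occur, so $\eta_a^-(r)\uparrow\infty$ $\px$-a.s.\ as $a\downarrow-\infty$; since $\diff L^b_r\ge0$ and $q>0$, monotone convergence yields $\E_x(\int_0^{\eta_a^-(r)}e^{-qt}\diff L^b_r(t))\nearrow\E_x(\int_0^{\infty}e^{-qt}\diff L^b_r(t))$ (in particular this expectation is finite, being the finite limit computed below). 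Thus it remains to pass to the limit on the right-hand side of Theorem~\ref{theorem_R_b_r}, for which it suffices to prove
\[
\lim_{a\downarrow-\infty}\mathcal H^a_{q,r}(y,0)=Z_{q,r}(y),\qquad \lim_{a\downarrow-\infty}\mathcal H^{a\prime}_{q,r}(y,0)=Z_{q,r}'(y),\qquad y\in\R,
\]
whereupon $\mathcal H^a_{q,r}(x,0)/\mathcal H^{a\prime}_{q,r}(b,0)\to Z_{q,r}(x)/Z_{q,r}'(b)$ for $x\le b$, while the case $x>b$ follows from the second case in Theorem~\ref{theorem_R_b_r}. The closed form $Z_{q,r}'(y)=q\Phi_{q+r}Z_q(y,\Phi_{q+r})/(q+r)$ then follows by differentiating the representation of $Z_{q,r}$ in \eqref{Z_special}, using $Z_q'(y)=qW_q(y)$ and $\frac{\diff}{\diff y}Z_q(y,\Phi_{q+r})=\Phi_{q+r}Z_q(y,\Phi_{q+r})-rW_q(y)$ (differentiate \eqref{Zs} at $\theta=\Phi_{q+r}$, noting $\kappa(\Phi_{q+r})-q=r$).

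To establish the two limits I would start from the second expression for $\mathcal H^a_{q,r}(\cdot,0)$ in Theorem~\ref{theorem_laplace}, $(q+r)\mathcal H^a_{q,r}(y,0)=-r\mathcal I^a_{q,r}(y)+qW^a_{q,r}(y)/W_{q+r}(-a)+rZ_q(y)$, and its counterpart \eqref{H_big_zero} for the right-derivative. By \eqref{nl}, $W^a_{q,r}(y)/W_{q+r}(-a)\to Z_q(y,\Phi_{q+r})$, and this may be differentiated in $y$ using \eqref{W_q_limit} and the log-concavity of scale functions in Remark~\ref{remark_smoothness_zero}(3) (giving $W_{q+r}'(w+)\sim\Phi_{q+r}W_{q+r}(w)$ as $w\to\infty$). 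Since $(q+r)Z_{q,r}(y)=rZ_q(y)+qZ_q(y,\Phi_{q+r})$, the statement reduces to $\mathcal I^a_{q,r}(y)\to0$ and $\mathcal I^{a\prime}_{q,r}(y)\to0$ as $a\downarrow-\infty$. A short computation inserting the first representations in \eqref{W_a_def} into $\mathcal I^a_{q,r}(y)=Z^a_{q,r}(y)-\tfrac{Z_{q+r}(-a)}{W_{q+r}(-a)}W^a_{q,r}(y)$ gives $\mathcal I^a_{q,r}(y)=\phi_a(y)-r\int_0^yW_q(y-z)\phi_a(z)\,\diff z$ with $\phi_a(z):=Z_{q+r}(z-a)-\tfrac{Z_{q+r}(-a)}{W_{q+r}(-a)}W_{q+r}(z-a)$, and, since $\phi_a(0)=0$, $\mathcal I^{a\prime}_{q,r}(y)=\phi_a'(y+)-r\int_0^yW_q(y-z)\phi_a'(z+)\,\diff z$. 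The key point is that $Z_{q+r}(w)-\tfrac{q+r}{\Phi_{q+r}}W_{q+r}(w)=\E_w(e^{-(q+r)\tau_0^-};\tau_0^-<\infty)$ (the $b\uparrow\infty$ limit of \eqref{laplace_in_terms_of_z} with $a=0$) tends to $0$ as $w\to\infty$, because $\tau_0^-\to\infty$ in $\px$-probability and $q+r>0$; combined with $Z_{q+r}(-a)/W_{q+r}(-a)\to(q+r)/\Phi_{q+r}$ and $W_{q+r}(z-a)/W_{q+r}(-a)\to e^{\Phi_{q+r}z}$ (from \eqref{W_q_limit}) this forces $\phi_a\to0$ locally uniformly, hence $\mathcal I^a_{q,r}(y)\to0$; the derivative statement follows the same way once one knows that the right-derivative of $w\mapsto\E_w(e^{-(q+r)\tau_0^-};\tau_0^-<\infty)$ also vanishes at $+\infty$, again by \eqref{W_q_limit} and Remark~\ref{remark_smoothness_zero}(3).

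The main obstacle is exactly this cancellation: the leading $O(W_{q+r}(-a))$ contributions to $\mathcal H^a_{q,r}$ and to $\mathcal H^{a\prime}_{q,r}$ annihilate, so the finite limits $Z_{q,r}$, $Z_{q,r}'$ only emerge after one verifies the (non-obvious) decay at $+\infty$ of $w\mapsto Z_{q+r}(w)-\tfrac{q+r}{\Phi_{q+r}}W_{q+r}(w)$ \emph{and of its right-derivative} — the latter being where the scale-function asymptotics of Remark~\ref{remark_smoothness_zero}(3) and \eqref{W_q_limit} are genuinely needed. The rest is bookkeeping with \eqref{def_H_I}, \eqref{H_big_zero}, \eqref{W_a_def}, \eqref{nl} and \eqref{Z_special}; as a consistency check, $\mathcal H^a_{q,r}(x,0)/\mathcal H^a_{q,r}(b,0)\to Z_{q,r}(x)/Z_{q,r}(b)$ is exactly Corollary~\ref{corollary_laplace_tau_a_b_infty}(ii) at $\theta=0$.
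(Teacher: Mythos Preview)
Your overall strategy---let $a\downarrow-\infty$ in Theorem~\ref{theorem_R_b_r}, justify the left-hand side by monotone convergence, and reduce the right-hand side to the two limits $\mathcal H^a_{q,r}(y,0)\to Z_{q,r}(y)$ and $\mathcal H^{a\prime}_{q,r}(y,0)\to Z_{q,r}'(y)$---is exactly the paper's, and your treatment of $\mathcal H^a_{q,r}(y,0)\to Z_{q,r}(y)$ via \eqref{nl} and $\mathcal I^a_{q,r}\to 0$ is fine (the paper records this as Lemma~\ref{convergence_summary}). The differentiation of \eqref{nl}, i.e.\ $(W^a_{q,r})'(y)/W_{q+r}(-a)\to Z_q'(y,\Phi_{q+r})$, is also correct and indeed only needs $W_{q+r}'(w+)/W_{q+r}(w)\to\Phi_{q+r}$, which does follow from \eqref{W_q_limit} and Remark~\ref{remark_smoothness_zero}(3).

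The gap is in your argument for $\mathcal I^{a\prime}_{q,r}(y)\to 0$. Writing $\phi_a(z)=F(z-a)-\dfrac{F(-a)}{W_{q+r}(-a)}W_{q+r}(z-a)$ with $F(w)=Z_{q+r}(w)-\dfrac{q+r}{\Phi_{q+r}}W_{q+r}(w)=\E_w(e^{-(q+r)\tau_0^-})$, your derivative step needs $F'(w)\to 0$. But $F'(w)=-\dfrac{q+r}{\Phi_{q+r}}\bigl(W_{q+r}'(w+)-\Phi_{q+r}W_{q+r}(w)\bigr)$, and the references you cite give only $W_{q+r}'(w+)-\Phi_{q+r}W_{q+r}(w)=o(W_{q+r}(w))$, not convergence to $0$; equivalently, $-F'$ is the density of $-\underline X_{\mathbf e_{q+r}}$, and there is no general reason this density must vanish at infinity. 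So the step ``the right-derivative of $w\mapsto\E_w(e^{-(q+r)\tau_0^-})$ also vanishes at $+\infty$'' is not justified by \eqref{W_q_limit} and Remark~\ref{remark_smoothness_zero}(3).

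The paper (Lemma~\ref{limit_a_derivative}) avoids this difficulty by a probabilistic bootstrap: from \eqref{h_0_reflected_2} one has $\widehat h(0,a,b,0)=-\mathcal I^{a\prime}_{q,r}(b)/\mathcal H^{a\prime}_{q,r}(b,0)$, and inserting \eqref{H_big_zero} yields an identity in which $\mathcal I^{a\prime}_{q,r}(b)$ is expressed through $\widehat h(0,a,b,0)$ and $(W^a_{q,r})'(b)/W_{q+r}(-a)+rW_q(b)$. Since $\widehat h(0,a,b,0)\to 0$ (because $\eta_a^-(r)\to\infty$) and the latter bracket converges (this \emph{only} uses $W_{q+r}'/W_{q+r}\to\Phi_{q+r}$, which you already have), one reads off $\mathcal I^{a\prime}_{q,r}(b)\to 0$ and then $\mathcal H^{a\prime}_{q,r}(b,0)\to Z_{q,r}'(b)$ from \eqref{H_big_zero}. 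Replacing your $\phi_a'$ argument by this indirect step completes the proof along exactly the lines you outlined.
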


Third, as a version of Theorem \ref{expected_bailouts}, we obtain the total expected discounted capital injections until the absolute ruin $\eta_a^-(r)$.  Again, as its corollary, by taking $a \downarrow -\infty$, we also obtain the infinite horizon case.

\begin{theorem}[Total discounted capital injections with killing] \label{theorem_L_b_r} 
Suppose 
$q \geq 0$ and $a < 0 < b$.  We have
 \begin{align*}
\widehat{f}(x,a,b) := \E_x\left(\int_0^{\eta_a^-(r)}e^{-qt}\diff R_{\p}^b(t)\right)= \left\{ \begin{array}{ll}\frac{\mathcal{H}_{\q,\p}^{a}(x, 0)}{\mathcal{H}_{\q,\p}^{a\prime}(b, 0)}h_{\q,\p}^{a \prime}(b) -h_{\q,\p}^{a}(x) & x \leq b, \\
\frac{\mathcal{H}_{\q,\p}^{a}(b, 0)}{\mathcal{H}_{\q,\p}^{a\prime}(b, 0)}h_{\q,\p}^{a \prime}(b) -h_{\q,\p}^{a}(b) & x > b, \end{array} \right.
 \end{align*}
where $h^{a \prime}_{q,r}(y)$ is the (right-hand) derivatives of $h^{a}_{q,r}(y)$ with respect to $y$ given by, for $y\neq a$,
\begin{multline*}
h^{a \prime}_{q,r}(y) =  \frac r {q+r} \Big[Z_q(y) + \frac{aZ_{\q+\p}(-a)+\overline{Z}_{\q+\p}(-a)}{W_{\q+\p}(-a)} (W^{a}_{\q,\p})'(y) \notag\\
-a\left[(q+r)W^{a}_{\q,\p}(y)-rW_q(y)Z_{q+r}(-a)\right]-
Z_{\q,\p}^a(y)+ rW_q(y)\overline{Z}_{q+r}(-a)\Big].
\end{multline*}
\end{theorem}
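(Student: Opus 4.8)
The plan is to reduce the reflected-above case to the already-proven identities for the process $X_r$ (Theorem~\ref{expected_bailouts}) by conditioning on the behaviour of $X_r$ relative to the barrier $b$. First I would treat the two regions $x \le b$ and $x > b$ separately. For $x > b$, the process $Y^b_r$ immediately pays out $x - b$ of dividends (through the classical reflection term $L^b_r$) and then restarts from $b$; since $\eta_a^-(r)$ cannot occur before the process reaches $b$ (as $a<0<b$), the strong Markov property at time $0$ gives $\widehat{f}(x,a,b) = \widehat{f}(b,a,b)$, which explains the second line. So the crux is the case $x \le b$.

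For $x \le b$, the key observation is that $Y^b_r$ agrees with $X_r$ until $X_r$ first up-crosses $b$, i.e.\ until $\tau_b^+(r)$; on that event the Parisian reflection processes $R_r$ and $R^b_r$ coincide up to that time. Hence I would write, via the strong Markov property applied at $\tau_b^+(r)$ and using that $R^b_r$ accrues no mass at a single time,
\begin{align*}
\widehat{f}(x,a,b) = \E_x\Big(\int_0^{\tau_b^+(r)\wedge\tau_a^-(r)} e^{-qt}\diff R_r(t)\Big) + \E_x\big(e^{-q\tau_b^+(r)}; \tau_b^+(r)<\tau_a^-(r)\big)\,\widehat{f}(b,a,b).
\end{align*}
The first term is exactly $f(x,a,b)$ from Theorem~\ref{expected_bailouts}, and the Laplace transform in the second term is $g(x,a,b,0) = \mathcal{H}^a_{q,r}(x,0)/\mathcal{H}^a_{q,r}(b,0)$ from Theorem~\ref{theorem_laplace}. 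This gives a self-referential equation for $\widehat{f}(b,a,b)$: setting $x=b$ and using $g(b,a,b,0)=1$ is degenerate, so instead I would take the value at $b$ via a limiting/derivative argument --- the same device used in Theorems~\ref{theorem_h_hat} and \ref{theorem_R_b_r}, where $\mathcal{H}^a_{q,r}(b,0)$ in the denominator is replaced by $\mathcal{H}^{a\prime}_{q,r}(b,0)$ because at $b$ the process reflects rather than passes through. Concretely, approximating the reflected process by a process killed on exceeding $b+\varepsilon$ and shifting the barrier, or differentiating the two-sided exit identity in $b$, converts the ratio $\mathcal{H}^a_{q,r}(\cdot,0)/\mathcal{H}^a_{q,r}(b,0)$ into $\mathcal{H}^a_{q,r}(\cdot,0)/\mathcal{H}^{a\prime}_{q,r}(b,0)$ and produces the boundary term $h^{a\prime}_{q,r}(b)$ in place of $h^a_{q,r}(b)$.

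Once that substitution is justified, the claimed formula for $x \le b$ follows by plugging $f(x,a,b) = \frac{\mathcal{H}^a_{q,r}(x,0)}{\mathcal{H}^a_{q,r}(b,0)}h^a_{q,r}(b) - h^a_{q,r}(x)$ into the decomposition above and simplifying; I would verify directly that $h^{a\prime}_{q,r}(y)$ as displayed is indeed $\partial_+ h^a_{q,r}(y)/\partial_+ y$ by differentiating \eqref{small_h_def} term by term, using $\overline{Z}_q' = Z_q$, $(\overline{Z}^a_{q,r})' = Z^a_{q,r}$, and the definition \eqref{W_a_derivative} of $(W^a_{q,r})'$, together with the relation $\mathcal{I}^a_{q,r}(y) = Z^a_{q,r}(y) - W^a_{q,r}(y)Z_{q+r}(-a)/W_{q+r}(-a)$. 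The main obstacle I anticipate is making the limiting argument at the barrier rigorous in the unbounded-variation case: one cannot simply evaluate the killed identity at $x=b$, and the excursion-theoretic or approximation machinery needed to pass from ``killed on exiting $[a,b]$'' to ``classically reflected at $b$'' must be invoked carefully --- this is presumably where the proof leans on the techniques of Section~\ref{section_unbounded} (or an $\varepsilon$-approximation of classical reflection as in \cite{APP}) and on the smoothness of $W_q$ recorded in Remark~\ref{remark_smoothness_zero}. The bookkeeping of which one-sided derivative appears (note $W_q'(b+)$ versus $W_q'(b-)$) will also need attention, but is routine given that remark.
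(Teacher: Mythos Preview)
Your first decomposition via the strong Markov property at $\tau_b^+(r)$ is correct and is exactly what the paper does; this yields
\[
\widehat{f}(x,a,b) = f(x,a,b) + \frac{\mathcal{H}^a_{q,r}(x,0)}{\mathcal{H}^a_{q,r}(b,0)}\,\widehat{f}(b,a,b),
\]
and you correctly observe that setting $x=b$ is degenerate. The gap is in how you propose to close the system. You suggest a ``limiting/derivative argument'' or an $\varepsilon$-approximation of the reflection, and you say this is ``the same device used in Theorems~\ref{theorem_h_hat} and \ref{theorem_R_b_r}''. But that is not what those proofs (or this one) do.

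The paper instead obtains a \emph{second} equation by applying the strong Markov property at $\eta_0^-$, the first time the \emph{reflected} process $Y^b$ goes below~$0$: starting from $b$,
\[
\widehat{f}(b,a,b) = -\widehat{U}_4(b,a,b) + \big[\widehat{U}_1(b,a,b,0)+\widehat{U}_2(b,a,b)\big]\,\widehat{f}(0,a,b),
\]
where $\widehat{U}_i(b,a,b) = \E_b\big(e^{-q\eta_0^-}u_i(Y^b(\eta_0^-),a)\big)$. These expectations are computed not by any limiting procedure but by Theorem~\ref{lemma_simplifying_reflected}, the reflected-process analogue of the Loeffen--Renaud--Zhou identity~\eqref{LRZ_identity}, which is the real engine of Section~\ref{section_proof_reflected}. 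The derivatives $\mathcal{H}^{a\prime}_{q,r}(b,0)$ and $h^{a\prime}_{q,r}(b)$ then enter through the algebraic identities~\eqref{H_H_derivative_relation}, which express $\mathcal{H}^a_{q,r}(b,0)-\widehat{U}_1-\widehat{U}_2$ and $h^a_{q,r}(b)-\widehat{U}_4$ as $W_q(b)/W_q'(b+)$ times the corresponding derivative. Substituting the second equation into the first at $x=0$ gives $\widehat{f}(0,a,b)=h^{a\prime}_{q,r}(b)/\mathcal{H}^{a\prime}_{q,r}(b,0)$, and the claimed formula follows.

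So the missing idea is concrete: a second Markov decomposition at $\eta_0^-$ combined with Theorem~\ref{lemma_simplifying_reflected}. Your $\varepsilon$-approximation might be salvageable in principle, but you have not indicated how the limit would be controlled, and it is not the route the paper takes.
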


\begin{corollary} \label{corollary_L_b_r}
Suppose $\kappa'(0+) > -\infty$, $q >0$, and $b > 0$.
We have
\begin{align*}
\E_x\left( \int_{0}^{\infty}e^{-qt} \diff R^b_r(t)\right) = \left\{ \begin{array}{ll}\displaystyle \frac{r}{q+r}\frac{Z_{q,r}(x)}{Z_{q,r}'(b)}Z_q(b)- \tilde{h}_{q,r}(x) & x \leq b, \\ \displaystyle \frac{r}{q+r}\frac{Z_{q,r}(b)}{Z_{q,r}'(b)}Z_q(b)-\tilde{h}_{q,r}(b)  & x > b. \end{array} \right.
\end{align*}

\end{corollary}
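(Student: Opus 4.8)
The plan is to obtain this corollary from Theorem~\ref{theorem_L_b_r} by letting $a\downarrow-\infty$. Since $\eta_a^-(r)$ increases as $a\downarrow-\infty$ and $Y^b_r$ has c\`adl\`ag paths (hence is bounded below on each compact time interval), $\eta_a^-(r)\uparrow\infty$ $\px$-a.s.: if $\sup_{a<0}\eta_a^-(r)$ were finite with positive probability, the path of $Y^b_r$ would be unbounded below on a bounded interval. Because $R^b_r$ is non-decreasing, monotone convergence (first pathwise against $\diff R^b_r$, then under $\E_x$) gives $\widehat f(x,a,b)\to\E_x(\int_0^\infty e^{-qt}\diff R^b_r(t))$, the quantity sought, which is finite since $q>0$. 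Also, the $x>b$ branch of Theorem~\ref{theorem_L_b_r} equals its $x=b$ branch, so it suffices to take the limit for $x\le b$ and then read off $x>b$. Finally, by the strong Markov property at $\tau_b^+(r)$ — before which $Y^b_r$ coincides with $X_r$ (whence $\diff R^b_r=\diff R_r$), and at which $X_r$ reaches $b$ continuously since $X$ is spectrally negative — one has the reduction $\widehat f(x,a,b)=f(x,a,b)+g(x,a,b,0)\,\widehat f(b,a,b)$ for $x<b$, with $f,g$ as in Theorems~\ref{expected_bailouts} and~\ref{theorem_laplace}.

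For the limit of the right-hand side, the factor $\mathcal H^a_{q,r}(x,0)/\mathcal H^{a\prime}_{q,r}(b,0)$ is handled cleanly: applying the monotone-convergence argument above to $\widehat j(x,a,b)=\mathcal H^a_{q,r}(x,0)/\mathcal H^{a\prime}_{q,r}(b,0)$ from Theorem~\ref{theorem_R_b_r} and invoking Corollary~\ref{corollary_R_b_r} gives $\mathcal H^a_{q,r}(x,0)/\mathcal H^{a\prime}_{q,r}(b,0)\to Z_{q,r}(x)/Z'_{q,r}(b)$, where $Z'_{q,r}(y)=q\Phi_{q+r}Z_q(y,\Phi_{q+r})/(q+r)$. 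The ingredients $h^a_{q,r}$ and $h^{a\prime}_{q,r}$ would be treated by substituting the asymptotic relation \eqref{nl}, its analogues $Z^a_{q,r}(y)/W_{q+r}(-a)\to\frac{q+r}{\Phi_{q+r}}Z_q(y,\Phi_{q+r})$, $\overline{Z}^a_{q,r}(y)/W_{q+r}(-a)\to\frac{q+r}{\Phi_{q+r}^2}Z_q(y,\Phi_{q+r})$ and $(W^a_{q,r})'(y)/W_{q+r}(-a)\to\Phi_{q+r}Z_q(y,\Phi_{q+r})+rW_q(y)$ (obtained by integrating/differentiating the representations in \eqref{W_a_def}), together with \eqref{W_q_limit}, which gives $Z_{q+r}(-a)/W_{q+r}(-a)\to(q+r)/\Phi_{q+r}$ and $\overline{Z}_{q+r}(-a)/W_{q+r}(-a)\to(q+r)/\Phi_{q+r}^2$. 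Feeding in, in addition, the reduction above, Corollary~\ref{corollary_laplace_tau_a_b_infty}(ii) ($g(x,a,b,0)\to Z_{q,r}(x)/Z_{q,r}(b)$), and Corollary~\ref{corollary_L_b}(ii) ($f(x,a,b)\to \frac{Z_{q,r}(x)}{Z_{q,r}(b)}\tilde h_{q,r}(b)-\tilde h_{q,r}(x)$ for $q>0$, using $\kappa'(0+)>-\infty$), one sees that the claim for all $x\le b$ is \emph{equivalent} to the single statement $\lim_{a\downarrow-\infty}\widehat f(b,a,b)=\frac r{q+r}\frac{Z_{q,r}(b)}{Z'_{q,r}(b)}Z_q(b)-\tilde h_{q,r}(b)$; here one uses that $k_{q,r}$ and $\tilde h_{q,r}$ differ by a constant multiple of $Z_{q,r}$ (which is why they are interchangeable in \eqref{ParBail}), so that difference drops out of the combination.

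The hard part will be this remaining $x=b$ limit, i.e.\ the asymptotic evaluation of $\frac{\mathcal H^a_{q,r}(b,0)}{\mathcal H^{a\prime}_{q,r}(b,0)}h^{a\prime}_{q,r}(b)-h^a_{q,r}(b)$. In \eqref{small_h_def}, $h^a_{q,r}$ carries the explicit-$a$ term $-a\mathcal I^a_{q,r}$ and the difference $\frac{\overline{Z}_{q+r}(-a)}{W_{q+r}(-a)}W^a_{q,r}(y)-\overline{Z}^a_{q,r}(y)$ of two quantities that each blow up at order $W_{q+r}(-a)$; at leading order these contributions cancel — precisely the mechanism behind the replaceability of $k_{q,r}$ by $\tilde h_{q,r}$ in \eqref{ParBail} — so one must expand $W_\gamma$ one order beyond \eqref{W_q_limit} using the resolvent identity \eqref{u_q_resolvent} (equivalently $W_\gamma(z)=\kappa'(\Phi_\gamma)^{-1}e^{\Phi_\gamma z}-u_\gamma(-z)$ with $u_\gamma(-z)\to0$) in order to control both the next-order terms and the factor $a$. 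Since $\widehat f(b,a,b)$ is bounded and, by the first paragraph, convergent, these divergent contributions must cancel in the combination; carrying out the bookkeeping and simplifying with $Z'_{q,r}(y)=q\Phi_{q+r}Z_q(y,\Phi_{q+r})/(q+r)$ and $\tilde h_{q,r}(y)=\frac r{q+r}(\overline{Z}_q(y)+\kappa'(0+)/q)$ yields the stated value. This computation is of the same nature as the one behind Corollary~\ref{corollary_L_b}, the additional difficulty coming from the presence of the derivative $\mathcal H^{a\prime}_{q,r}$ (equivalently $Z'_{q,r}$) in the denominator and of $(W^a_{q,r})'$ in $h^{a\prime}_{q,r}$.
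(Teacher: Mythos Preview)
Your overall strategy (monotone convergence for the left-hand side as $a\downarrow-\infty$, then limits on the right of Theorem~\ref{theorem_L_b_r}) is the same as the paper's, and your reduction to $x=b$ via the Markov decomposition is correct if redundant. The substantive difference is in how you handle $h^a_{q,r}$ and $h^{a\prime}_{q,r}$.

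You correctly observe that in the explicit formula \eqref{small_h_def} several pieces individually blow up, and you propose to resolve this by a second-order expansion of $W_{q+r}$ via \eqref{u_q_resolvent}. That route is plausible but you do not execute it, and it is considerably harder than what the paper does. The paper avoids any cancellation analysis by proving that \emph{each of the four pieces converges separately to a finite limit}: $\mathcal{H}^a_{q,r}(x,0)\to Z_{q,r}(x)$, $\mathcal{H}^{a\prime}_{q,r}(b,0)\to Z'_{q,r}(b)$, $h^a_{q,r}(x)\to k_{q,r}(x)$, and $h^{a\prime}_{q,r}(b)\to k'_{q,r}(b)$ (Lemmas~\ref{convergence_summary}(ii),(iii) and \ref{limit_a_derivative}(ii),(iii)). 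The limit is then just the termwise expression $\frac{Z_{q,r}(x)}{Z'_{q,r}(b)}k'_{q,r}(b)-k_{q,r}(x)$, and the final simplification to $\tilde h_{q,r}$ is exactly the observation you make, that $k_{q,r}-\tilde h_{q,r}$ is a constant multiple of $Z_{q,r}$.

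The point you are missing is \emph{how} the paper proves $h^a_{q,r}(x)\to k_{q,r}(x)$ and $h^{a\prime}_{q,r}(b)\to k'_{q,r}(b)$ without algebraic second-order expansions. For $h^a_{q,r}$, Corollary~\ref{corollary_big_U_zero} gives the probabilistic representation $h^a_{q,r}(x)=-W_q(x)\,\mathbf{n}\big(e^{-q\tau_0^-}u_4(X(\tau_0^-),a);\tau_0^-<\tau_x^+\big)$, under which the limit $a\downarrow-\infty$ is plain monotone convergence inside the excursion measure; the limiting integrand is identified as $\lim_{\theta\downarrow0}\partial_\theta Z_{q,r}(x,\theta)=k_{q,r}(x)$. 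For $h^{a\prime}_{q,r}$, the paper uses the relation \eqref{H_H_derivative_relation} connecting $h^{a\prime}_{q,r}(b)$ to $h^a_{q,r}(b)$ and $\widehat U_4(b,a,b)$, and computes $\lim_{a\downarrow-\infty}\widehat U_4(b,a,b)$ by the Markov property and the closed-form identities \eqref{identities_AAP} for the reflected process from \cite{APP}. Both routes bypass the divergent terms entirely by switching to representations in which the limit is monotone. Your proposed direct expansion would work in principle, but it is the hard way, and as written your proof stops precisely where the work begins.
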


\section{Proofs for the bounded variation case} \label{section_bounded}

In this section, we show Theorems \ref{theorem_laplace} and  \ref{expected_bailouts} for the case $X$ is of bounded variation. The proofs are direct applications of Lemma 2.1 of \cite{LRZ}.  We shall first review their results and use them to compute the expected values \eqref{def_big_U} defined below, in terms of the scale functions given in \eqref{W_a_def}. These together with an application of the strong Markov property complete the proofs.  For the proofs of their corollaries (both for bounded and unbounded variation cases), see Appendix \ref{appendix_proofs_regular}.

\subsection{Auxiliary results}

Fix $q \geq 0$. Let $\mathbf{e}_{\p}$ be the exponential random variable with parameter $r$ independent of $X$.
Define, for $a < 0$ and $x \leq 0$,
\begin{align*}
u_1(x, a, \theta) &:= \E_{x}
\left(e^{-q\mathbf{e}_{\p}+\theta X(\mathbf{e}_{\p})};
\mathbf{e}_{\p}<\tau_0^+\wedge\tau_a^-\right),  \quad \theta \geq 0, \\
u_2(x, a) &:= \E_{x} \left(e^{-q\tau_0^+};
\tau_0^+ < \mathbf{e}_{\p}\wedge\tau_a^-\right), \\
u_3(x,a) &:= \E_{x}\left(e^{-q\tau_a^-};\tau_a^-<\mathbf{e}_{\p}\wedge\tau_0^+\right), \\
u_4 (x, a) &:= \E_{x}
\left(e^{-q\mathbf{e}_{\p}}X(\mathbf{e}_{\p});
\mathbf{e}_{\p}<\tau_0^+\wedge\tau_a^-\right).
\end{align*}
Also, define for  $a < 0$ and $x \in \R$,
	\begin{align}
	\tilde{u}_1(x,a,\theta):=r \int_{0}^{-a} e^{-\theta u }\Big[ \frac {W_{\q+\p}(x-a) W_{\q+\p} (u)} {W_{\q+\p}(-a)} -W_{\q+\p} (x+u) \Big] \diff u, \label{u_1_tilde}
	\end{align}
where in particular
\begin{align}
\tilde{u}_1(x, a, 0)
&=r \frac {W_{q+r}(x-a)} {W_{q+r}(-a)} \overline{W}_{q+r} (-a) - \frac r {q+r} [Z_{q+r}(x-a) - Z_{q+r}(x)],\label{u_1_tilde_zero}
\end{align}
and
\begin{align*}
\tilde{u}_2(x, a) &:= \frac{W_{q+\p}(x-a)}{W_{q+\p}(-a)}, \\
\tilde{u}_3(x,a) &:= Z_{\q+\p}(x-a)-W_{\q+\p}(x-a)\frac{Z_{\q+\p}(-a)}{W_{\q+\p}(-a)}.
\end{align*}
In particular, $\tilde{u}_1(x, a, \theta) = \tilde{u}_2(x,a) = 0$ and $\tilde{u}_3(x,a) = 1$ for  $x < a$.

We show below that $u_i$ and $\tilde{u}_i$ match for $x \leq 0$.

\begin{lemma} \label{lemma_u}  
For $q \geq 0$, $a < 0$ and $x \leq 0$,
 we have $u_1(x, a, \theta) = \tilde{u}_1(x, a, \theta)$ for $\theta \geq 0$,
and $u_i(x, a) = \tilde{u}_i(x, a)$ for $i = 2,3$.


\end{lemma}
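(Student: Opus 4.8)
The plan is to identify $u_1, u_2, u_3$ with $\tilde u_1, \tilde u_2, \tilde u_3$ by recognizing each $u_i$ as an expectation of the process run until the first of three competing events — an exponential clock $\mathbf{e}_r$, the up-crossing $\tau_0^+$, and the down-crossing $\tau_a^-$ — and then rewriting that expectation in terms of the $(q+r)$-resolvent and $(q+r)$-scale function of $X$. The key observation is that killing at the independent exponential time $\mathbf{e}_r$ amounts to adding $r$ to the discount rate: for any nonnegative measurable $F$ adapted appropriately, $\E_x(e^{-q\mathbf{e}_r}F; \mathbf{e}_r < \tau_0^+ \wedge \tau_a^-) = r\,\E_x\bigl(\int_0^{\tau_0^+\wedge\tau_a^-} e^{-(q+r)t} F_t\,\diff t\bigr)$ with the appropriate $F_t$, and $\E_x(e^{-q\tau_0^+}; \tau_0^+ < \mathbf{e}_r \wedge \tau_a^-) = \E_x(e^{-(q+r)\tau_0^+}; \tau_0^+ < \tau_a^-)$, and similarly for $\tau_a^-$. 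This is the standard ``exponential killing'' device and is the conceptual heart of the lemma.

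Concretely: for $u_2$ and $u_3$, once the exponential clock is absorbed into the discount rate we are left with the two-sided exit identities \eqref{laplace_in_terms_of_z} applied with $q$ replaced by $q+r$ and with barriers $a$ and $0$; shifting by $-a$ and reading off gives exactly $\tilde u_2(x,a) = W_{q+r}(x-a)/W_{q+r}(-a)$ and $\tilde u_3(x,a) = Z_{q+r}(x-a) - W_{q+r}(x-a)Z_{q+r}(-a)/W_{q+r}(-a)$, so $u_2 = \tilde u_2$ and $u_3 = \tilde u_3$ for $x \leq 0$. For $u_1(x,a,\theta) = \E_x(e^{-q\mathbf{e}_r + \theta X(\mathbf{e}_r)}; \mathbf{e}_r < \tau_0^+ \wedge \tau_a^-)$, absorbing the clock yields $r\,\E_x\bigl(\int_0^{\tau_0^+\wedge\tau_a^-} e^{-(q+r)t} e^{\theta X(t)}\,\diff t\bigr)$, and then the $(q+r)$-resolvent density \eqref{resolvent_density} with barriers $a$ and $0$ gives
\[
u_1(x,a,\theta) = r\int_a^0 e^{\theta y}\Bigl[\frac{W_{q+r}(x-a)W_{q+r}(-y)}{W_{q+r}(-a)} - W_{q+r}(x-y)\Bigr]\diff y;
\]
the substitution $u = -y \in (0,-a)$ turns this into precisely \eqref{u_1_tilde}. (The formula \eqref{u_1_tilde_zero} for $\theta = 0$ then follows by integrating $W_{q+r}$ and using $Z_{q+r}(x) = 1 + (q+r)\overline W_{q+r}(x)$; $u_4$ is handled the same way, or by differentiating $u_1$ in $\theta$ at $0$, though the statement of the lemma only asserts the identifications for $i=1,2,3$.) The boundary cases $x < a$ (where all quantities vanish or equal $1$, matching the stated values of $\tilde u_i$) and $x = 0$ are immediate from the definitions.

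The main obstacle is bookkeeping rather than conceptual: one must verify carefully that the resolvent density \eqref{resolvent_density} is being applied on the correct interval $[a,0]$ with the correct orientation (the identity as stated has exit from $[a,b]$; here $b=0$), and that the substitution $y \mapsto u = -y$ is carried out with the right limits and sign, so that the kernel $W_{q+r}(x-a)W_{q+r}(b-y)/W_{q+r}(b-a) - W_{q+r}(x-y)$ with $b=0$ becomes $W_{q+r}(x-a)W_{q+r}(u)/W_{q+r}(-a) - W_{q+r}(x+u)$. One should also note that for $x \leq 0$ the process starts at or below the upper barrier $0$, so $\tau_0^+$ here is an up-crossing (hitting from below), consistent with \eqref{laplace_in_terms_of_z}. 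No approximation or excursion theory is needed for this lemma — it is a direct application of the classical scale-function identities after the exponential-killing reduction, which is exactly why the bounded-variation proofs in this section can proceed via the strong Markov property alone.
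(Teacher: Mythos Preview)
Your proposal is correct and follows essentially the same approach as the paper: absorb the independent exponential clock $\mathbf{e}_r$ into the discount rate (so that $q$ becomes $q+r$), then read off $u_2, u_3$ from the two-sided exit identities \eqref{laplace_in_terms_of_z} and $u_1$ from the resolvent density \eqref{resolvent_density}. The only cosmetic difference is that the paper first shifts the process by $-a$ to work on $[0,-a]$ before applying \eqref{resolvent_density}, whereas you apply it directly on $[a,0]$ and then substitute $u=-y$; these are equivalent by spatial homogeneity.
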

\begin{proof} 
(i) By \eqref{resolvent_density},
\begin{align*}
u_1(x, a, \theta) &= r \E_{x-a}\left(\int_0^{\tau_0^-\wedge\tau_{-a}^+}e^{-(q+\p)s+\theta (X(s)+a)}\diff s\right)\\
&=r \int_{0}^{-a} e^{\theta(y+a)}\Big[ \frac {W_{\q+\p}(x-a) W_{\q+\p} (-a-y)} {W_{\q+\p}(-a)} -W_{\q+\p} (x-a-y) \Big] \diff y  = \tilde{u}_1(x,a, \theta).
\end{align*}
By setting $\th=0$ in the above expression we can easily obtain \eqref{u_1_tilde_zero}.

(ii), (iii)  For $i = 2,3$, the results are clear by following \eqref{laplace_in_terms_of_z} and noticing that $u_2(x,a) =\E_{x}
(e^{-(q+\p)\tau_0^+};\tau_a^->\tau_0^+)$ and $u_3(x,a) =\E_{x} (e^{-(q+\p)\tau_a^-};\tau_a^-<\tau_0^+)$.
\end{proof}

As in \cite{LRZ}, for any $p \geq 0$,  let $\mathcal{V}_0^{(p)}$ be the set of measurable functions $v_{p}: \R \to [0, \infty)$ satisfying
\begin{align*}
\E_x \big(e^{-p\tau_0^-} v_{p} (X(\tau_0^-)); \tau_0^- < \tau_b^+ \big) = v_{p} (x) - \frac {W_{p}(x)} {W_{p}(b)} v_{p} (b), \quad x \leq b.
\end{align*}
We shall further define $\tilde{\mathcal{V}}_0^{(p)}$ to be the set of positive measurable functions $v_{p}(x)$ that satisfy conditions (i) or (ii) in Lemma 2.1 of \cite{LRZ}, which state as follows: 
\begin{enumerate}
\item[(i)] For the case $X$ is of bounded variation,  $v_p \in \mathcal{V}_0^{(p)}$ and there exists large enough $\lambda$ such that $\int_0^\infty e^{- \lambda z} v_p (z) \diff z < \infty$.
\item[(ii)] For the case $X$ is of unbounded variation, there exist a sequence of functions $v_{p,n}$ that converge to $v_p$ uniformly on compact sets, where $v_{p,n}$ belongs to the class $\tilde{\mathcal{V}}_0^{(p)}$ for the process $X^n$; here $(X^n; n \geq 1)$ is a sequence of spectrally negative \lev processes of bounded variation that converge to $X$ almost surely uniformly on compact time intervals (which can be chosen as in, for example, page 210 of \cite{B}).
\end{enumerate}
Lemma 2.1 of \cite{LRZ} shows that, for all $p, q \geq 0$, $v_{p} \in \tilde{\mathcal{V}}_0^{(p)}$ and $x \leq b$,
\begin{multline} \label{LRZ_identity}
\E_x \big(e^{-q \tau_0^-} v_{p} (X(\tau_0^-)); \tau_0^- < \tau_b^+ \big) \\= v_{p} (x) - (p-q) \int_0^x W_{q} (x-y) v_{p}(y) \diff y - \frac {W_{q}(x)} {W_{q}(b)} \Big( v_{p}(b) - (p-q) \int_0^b W_{q}(b-y) v_{p} (y) \diff y\Big).
\end{multline}

Fix any $a < 0$.  By Lemma 2.2 of \cite{LRZ} and spatial homogeneity, 
\begin{align}
(y \mapsto W_{q+r}(y-a)) \in \tilde{\mathcal{V}}_0^{({q+r})} \quad \textrm{and} \quad (y \mapsto Z_{q+r}(y-a)) \in \tilde{\mathcal{V}}_0^{({q+r})}. \label{W_Z_in_V}
\end{align}

\begin{lemma} \label{lemma_u_in_V}    For any $q \geq 0$ and $a < 0$, we have 
	$(y \mapsto \tilde{u}_1(y, a, \theta)) \in \tilde{\mathcal{V}}_0^{({q+r})}$ for $\theta \geq 0$, and $(y \mapsto \tilde{u}_i(y, a)) \in \tilde{\mathcal{V}}_0^{({q+r})}$ for $i=2,3$. 
\end{lemma}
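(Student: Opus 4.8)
The plan is to verify that each of the three functions $y \mapsto \tilde{u}_1(y,a,\theta)$, $y \mapsto \tilde{u}_2(y,a)$, and $y \mapsto \tilde{u}_3(y,a)$ lies in $\tilde{\mathcal{V}}_0^{(q+r)}$ by exhibiting each as a linear combination of the functions $y \mapsto W_{q+r}(y-a)$ and $y \mapsto Z_{q+r}(y-a)$, for which membership is already known by \eqref{W_Z_in_V}, together with a check that the class $\tilde{\mathcal{V}}_0^{(q+r)}$ is closed under the operations used. For $\tilde{u}_2$ and $\tilde{u}_3$ this is immediate: by definition $\tilde{u}_2(y,a) = W_{q+r}(y-a)/W_{q+r}(-a)$ is simply a positive scalar multiple of $y \mapsto W_{q+r}(y-a)$, and $\tilde{u}_3(y,a) = Z_{q+r}(y-a) - \frac{Z_{q+r}(-a)}{W_{q+r}(-a)} W_{q+r}(y-a)$ is a linear combination of the two generating functions. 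So the first step is to record that $\tilde{\mathcal{V}}_0^{(q+r)}$ is a cone closed under nonnegative linear combinations — this follows directly from the defining relation \eqref{LRZ_identity} (or from the definition of $\mathcal{V}_0^{(p)}$ in the bounded variation case and the uniform-approximation structure in the unbounded variation case), since \eqref{LRZ_identity} is linear in $v_p$, positivity is preserved under nonnegative combinations, and the integrability/approximation side conditions are stable under finite linear combinations.

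The main work is $\tilde{u}_1$. The natural route is to use the expression \eqref{u_1_tilde} and, following the computation in the proof of Lemma \ref{lemma_u}, recognize $\tilde{u}_1(y,a,\theta)$ (for $y \geq a$, the relevant range; it vanishes for $y < a$) as $r$ times the $(q+r)$-resolvent of $X$ killed on exiting $[a,0]$ integrated against $e^{\theta(\cdot)}$. More precisely, I would write
\begin{align*}
\tilde{u}_1(y, a, \theta) = r\, \E_{y}\Big( \int_0^{\tau_a^- \wedge \tau_0^+} e^{-(q+r)s + \theta X(s)} \diff s \Big), \quad a \leq y \leq 0,
\end{align*}
using \eqref{resolvent_density} with the substitution there. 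Then apply the strong Markov property at $\tau_0^-$: on $\{\tau_0^- < \tau_b^+\}$ for $b > 0$, the process restarts from $X(\tau_0^-) \leq 0$, and the desired resolvent functional from that restart point is exactly $\tilde{u}_1(X(\tau_0^-), a, \theta)/r$. This gives an identity of the form $\E_y(e^{-(q+r)\tau_0^-} \tilde{u}_1(X(\tau_0^-),a,\theta); \tau_0^- < \tau_b^+) = \tilde{u}_1(y,a,\theta) - \frac{W_{q+r}(y)}{W_{q+r}(b)} \tilde{u}_1(b,a,\theta)$, which (with the $p = q+r = q$ specialization, so the correction term with $(p-q)$ vanishes) is precisely the membership condition $v \in \mathcal{V}_0^{(q+r)}$ in the bounded variation case. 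Alternatively, and perhaps more cleanly for a uniform treatment, I would just expand \eqref{u_1_tilde}: for fixed $u$, the integrand $\frac{W_{q+r}(y-a)W_{q+r}(u)}{W_{q+r}(-a)} - W_{q+r}(y+u)$ is, up to the spatial shift $y \mapsto y+u$ in the second term, a combination of scale functions; one checks $y \mapsto W_{q+r}(y+u)$ lies in $\tilde{\mathcal{V}}_0^{(q+r)}$ (spatial homogeneity again, shifting the barrier $a \mapsto a-u < a$), and then $\tilde{u}_1$ is an integral (over $u \in (0,-a)$ against the finite positive measure $r e^{-\theta u}\diff u$) of functions in $\tilde{\mathcal{V}}_0^{(q+r)}$, hence in $\tilde{\mathcal{V}}_0^{(q+r)}$ provided the class is closed under such integral mixtures.

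The step I expect to be the main obstacle is precisely justifying this closure of $\tilde{\mathcal{V}}_0^{(q+r)}$ under the continuous mixture $\int_0^{-a} e^{-\theta u}(\cdot)\diff u$, and in particular checking the side conditions in Lemma 2.1 of \cite{LRZ}: in the bounded variation case one needs a uniform exponential integrability bound $\int_0^\infty e^{-\lambda z}\tilde{u}_1(z,a,\theta)\diff z < \infty$, which follows from the Laplace transform \eqref{scale_function_laplace} and the explicit form \eqref{u_1_tilde} since $\tilde{u}_1$ grows at most like $W_{q+r}$; and in the unbounded variation case one needs the approximating sequence $\tilde{u}_{1,n}$ built from the approximating processes $X^n$ to converge uniformly on compacts to $\tilde{u}_1$, which reduces to uniform-on-compacts convergence of the scale functions $W^{(n)}_{q+r} \to W_{q+r}$ and $Z^{(n)}_{q+r} \to Z_{q+r}$ (a standard fact, e.g. from the continuity properties of scale functions under the approximation in \cite{B}) together with dominated convergence in the $u$-integral. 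I would handle this by first dispatching $\tilde{u}_2, \tilde{u}_3$ via finite linear combinations, then treating $\tilde{u}_1$ either directly through the strong Markov / resolvent identity above (which sidesteps the mixture issue by producing the $\mathcal{V}_0^{(q+r)}$ relation in one shot) or, if a uniform argument is preferred, by invoking the Markov-property representation for each bounded variation approximant $X^n$ and passing to the limit using scale-function convergence.
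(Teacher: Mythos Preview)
Your approach is essentially the paper's: it records that $\tilde{\mathcal{V}}_0^{(q+r)}$ is a \emph{linear space} (citing \cite{LRZ}), uses $(y \mapsto W_{q+r}(y+u)) \in \tilde{\mathcal{V}}_0^{(q+r)}$ for each $u>0$ together with the integral representation \eqref{u_1_tilde} to get $\tilde{u}_1$, and treats $\tilde{u}_2,\tilde{u}_3$ as linear combinations of the functions in \eqref{W_Z_in_V}. One minor slip: you describe $\tilde{\mathcal{V}}_0^{(q+r)}$ as a cone closed under \emph{nonnegative} combinations, but $\tilde{u}_3$ requires a signed combination, so you actually need (and the paper invokes) the full linear-space property.
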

\begin{proof}
 Recall as in  \cite{LRZ} that $\tilde{\mathcal{V}}_0^{({q+r})}$ is a linear space.
%
Hence using \eqref{u_1_tilde} and the fact that $(y \mapsto W_{\q+\p} (y+u)) \in \tilde{\mathcal{V}}_0^{({q+r})}$ for all $u > 0$, 
 we have $(y \mapsto \tilde{u}_1(y, a, \theta)) \in \tilde{\mathcal{V}}_0^{({q+r})}$.

The proofs for $\tilde{u}_i(\cdot, a)$ for $i=2,3$ hold because these are linear combinations of \eqref{W_Z_in_V}.

\end{proof}


Using these identities, we shall now compute, for $a < 0 < b$ and $x \leq b$,
\begin{align} \label{def_big_U}
\begin{split}
U_1(x, a,b,\theta) &:= \E_x\Big(e^{-q\tau_0^-} u_1 (X(\tau_0^-), a, \theta);\tau_0^-<\t_b^+\Big), \quad \theta \geq 0, \\
U_i(x, a,b) &:= \E_x\Big(e^{-q\tau_0^-} u_i (X(\tau_0^-), a );\tau_0^-<\t_b^+\Big), \quad i = 2,3, 4.
\end{split}
\end{align}
For $a < 0$ and $x \in \R$, we define
\begin{align} \label{U_0_summary}
\begin{split}
U_1^0(a,x, \theta)&:=- \p\int_{0}^{-a} e^{-\theta u}\Big[ \frac {W_{\q,\p}^a(x) W_{\q+\p} (u)} {W_{\q+\p}(-a)} -W_{\q,\p}^{-u} (x) \Big]\diff u, \quad \theta \geq 0, \\
U_2^0(a,x) &:= - W_{q,r}^a (x) / W_{q+r}(-a), \\
U_3^0(a,x) &:= - \mathcal{I}_{q,r}^a (x) , \\
U_4^0(a,x) &:= - h^{a}_{q,r}(x),
\end{split}
\end{align}
where $\mathcal{I}_{q,r}^a$ and $h^{a}_{q,r}$ are defined in \eqref{def_H_I} and  \eqref{small_h_def}, respectively.
In particular,
\begin{align} \label{U_0_summary_2}
\begin{split}
U_1^0(a, x, 0) &=- \frac{\p}{\q+\p}\Big( Z_{\q}(x)
- Z_{\q,\p}^a(x) +\frac{ (q+r) \overline{W}_{\q+\p}(-a)}{W_{\q+\p}(-a)}W_{\q,\p}^a(x)\Big).
\end{split}
\end{align}
Note that
\begin{align}
U_1^0(a,x, \theta) + U_2^0(a,x) = -\mathcal{H}_{q,r}^a(x, \theta), \quad x \in \R, \quad \theta \geq 0, \label{U_1_U_2_sum}
\end{align}
and
\begin{align}
U_1^0(a,0, \theta)  = U_3^0(a,0)  = U_4^0(a,0)  = 0 \quad \textrm{and} \quad U_2^0(a,0) = -1. \label{U_zero_value}
\end{align}

Here, we have the following results.

\begin{corollary} \label{corollary_big_U}
For $q \geq 0$, $a < 0 < b$, and $x \leq b$,
\begin{align*}
U_1(x, a,b,\theta) &=   \frac{W_q(x)}{W_q(b)}U_1^0(a,b,\theta) - U_1^0(a,x,\theta), \quad \theta \geq 0, \\
U_i(x, a,b) &=  \frac{W_q(x)}{W_q(b)}U_i^0(a,b) - U_i^0(a,x), \quad i = 2,3, 4.
\end{align*}
\end{corollary}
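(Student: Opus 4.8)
The plan is to apply the identity \eqref{LRZ_identity} of Lemma 2.1 of \cite{LRZ} with $p = q+r$ to each of the functions $v_{p} = \tilde u_i(\,\cdot\,,a,\theta)$ (or $\tilde u_i(\,\cdot\,,a)$), which by Lemma \ref{lemma_u_in_V} all lie in $\tilde{\mathcal V}_0^{(q+r)}$, and then to translate the resulting expressions back into the notation of \eqref{W_a_def} and \eqref{U_0_summary}. Concretely, for $i=2,3$ and the function $\tilde u_1$ the quantity $U_i(x,a,b,\cdot)$ is, by definition \eqref{def_big_U} and Lemma \ref{lemma_u}, exactly $\E_x\big(e^{-q\tau_0^-}\tilde u_i(X(\tau_0^-),a,\cdot);\tau_0^-<\tau_b^+\big)$ since the overshoot $X(\tau_0^-)$ is always $\le 0$ where $u_i=\tilde u_i$; so \eqref{LRZ_identity} gives immediately
\[
U_i(x,a,b,\cdot) = \tilde u_i(x,a,\cdot) - r\int_0^x W_q(x-y)\tilde u_i(y,a,\cdot)\,\diff y - \frac{W_q(x)}{W_q(b)}\Big(\tilde u_i(b,a,\cdot) - r\int_0^b W_q(b-y)\tilde u_i(y,a,\cdot)\,\diff y\Big),
\]
because $p-q = r$. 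Writing $U_i^0(a,x,\cdot) := -\tilde u_i(x,a,\cdot) + r\int_0^x W_q(x-y)\tilde u_i(y,a,\cdot)\,\diff y$ for the "base" function, this is precisely the claimed form $U_i = \tfrac{W_q(x)}{W_q(b)}U_i^0(a,b,\cdot) - U_i^0(a,x,\cdot)$.

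\textbf{Identifying the base functions.} The real content is then the algebraic verification that this base function $-\tilde u_i(x,a,\cdot) + r\int_0^x W_q(x-y)\tilde u_i(y,a,\cdot)\,\diff y$ coincides with the $U_i^0$ displayed in \eqref{U_0_summary}. For $i=2$: substitute $\tilde u_2(y,a) = W_{q+r}(y-a)/W_{q+r}(-a)$ and use the first-equality definition of $W^a_{q,r}$ in \eqref{W_a_def}, namely $W^a_{q,r}(x) = W_{q+r}(x-a) - r\int_0^x W_q(x-y)W_{q+r}(y-a)\,\diff y$; dividing by $W_{q+r}(-a)$ gives $U_2^0(a,x) = -W^a_{q,r}(x)/W_{q+r}(-a)$ directly. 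For $i=3$: substitute $\tilde u_3(y,a) = Z_{q+r}(y-a) - W_{q+r}(y-a)Z_{q+r}(-a)/W_{q+r}(-a)$ and use the first-equality definitions of both $Z^a_{q,r}$ and $W^a_{q,r}$ in \eqref{W_a_def}; the result collapses to $-\big(Z^a_{q,r}(x) - W^a_{q,r}(x)Z_{q+r}(-a)/W_{q+r}(-a)\big) = -\mathcal I^a_{q,r}(x)$ by \eqref{def_H_I}. For $\tilde u_1$: using \eqref{u_1_tilde}, interchange the order of the $u$-integral and the convolution integral (Fubini, all integrands nonnegative), apply the $i=2$-type computation to the term $W_{q+r}(y-a)W_{q+r}(u)/W_{q+r}(-a)$ and the $W^{-u}_{q,r}$ definition (with $a$ replaced by $-u$, noting $-u<0$) to the term $W_{q+r}(y+u)$, obtaining $U_1^0(a,x,\theta) = -r\int_0^{-a}e^{-\theta u}\big[W^a_{q,r}(x)W_{q+r}(u)/W_{q+r}(-a) - W^{-u}_{q,r}(x)\big]\diff u$ as in \eqref{U_0_summary}. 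The $\theta=0$ specialization \eqref{U_0_summary_2} then follows from \eqref{u_1_tilde_zero} together with the identity $\int_0^x W_q(x-y)[Z_{q+r}(y-a)-Z_{q+r}(y)]\diff y$ reducing via the second equalities in \eqref{W_a_def}.

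\textbf{The case $i=4$.} For $U_4$ one cannot directly invoke Lemma \ref{lemma_u} since $u_4$ is not among the $\tilde u_i$; instead note $u_4(x,a) = \E_x\big(e^{-q\mathbf e_r}X(\mathbf e_r);\mathbf e_r<\tau_0^+\wedge\tau_a^-\big) = \partial_\theta u_1(x,a,\theta)\big|_{\theta=0}$, so $U_4(x,a,b) = \partial_\theta U_1(x,a,b,\theta)\big|_{\theta=0}$ and $U_4^0(a,x) = \partial_\theta U_1^0(a,x,\theta)\big|_{\theta=0}$ (differentiation under the integral/expectation is justified by dominated convergence using the exponential moment of $X(\mathbf e_r)$ restricted to the bounded region). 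Carrying out the $\theta$-derivative of the expression for $U_1^0(a,x,\theta)$ and matching against the definition of $h^a_{q,r}$ in \eqref{small_h_def} — here one uses $\int_0^{-a}u\,W_{q+r}(u)\diff y$-type integrals and the relations $\overline Z_{q+r}$, $Z^a_{q,r}$, $\overline Z^a_{q,r}$ from \eqref{W_a_def} — gives $U_4^0(a,x) = -h^a_{q,r}(x)$. The main obstacle is purely bookkeeping: keeping the two equivalent forms of $W^a_{q,r}$, $Z^a_{q,r}$, $\overline Z^a_{q,r}$ straight and correctly handling the $\theta$-differentiation and the Fubini interchange in the $\tilde u_1$ computation; there is no conceptual difficulty once Lemma \ref{lemma_u_in_V} and \eqref{LRZ_identity} are in hand. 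Finally \eqref{U_1_U_2_sum} and \eqref{U_zero_value} are immediate consequences of the definitions and of $W^a_{q,r}(0)=W_q(0)\cdot(\text{stuff})$ evaluated at $x=0$, where $W^0_{q,r}, Z^0_{q,r}, \overline Z^0_{q,r}$ reduce via \eqref{rel_Z_0}.
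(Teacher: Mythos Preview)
Your proposal is correct and follows essentially the same route as the paper: for $i=1,2,3$ you invoke Lemma~\ref{lemma_u_in_V} together with the identity \eqref{LRZ_identity} (with $p=q+r$), and for $i=4$ you obtain the result by differentiating the $i=1$ expression in $\theta$ at $\theta=0$. The paper's proof is identical in structure, merely stating that $i=1,2,3$ are ``a direct consequence of Lemma~\ref{lemma_u_in_V} and \eqref{LRZ_identity}'' without spelling out the algebraic identification of the base functions (your verification that $-\tilde u_i + r\int_0^x W_q(x-y)\tilde u_i(y)\,\diff y$ matches the displayed $U_i^0$ in \eqref{U_0_summary} is exactly what is being suppressed there), and then carrying out the $\theta$-derivative for $i=4$ via integration by parts to arrive at \eqref{new_iden_u4}.
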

\begin{proof}
The case in which $i=1,2,3$ is a direct consequence of Lemma \ref{lemma_u_in_V} and \eqref{LRZ_identity}, and hence we omit the proof.
For the remaining case, it is clear that $U_4(x, a,b) = \lim_{\theta \downarrow 0} (\partial U_1(x, a,b, \theta) / \partial \theta)$.  We have
\begin{align*}
\lim_{\theta \downarrow 0} \frac \partial {\partial \theta}U_1^0(a,x, \theta)&= \p\int_{0}^{-a} u \Big[ \frac {W_{\q,\p}^a(x) W_{\q+\p} (u)} {W_{\q+\p}(-a)} -W_{\q,\p}^{-u} (x) \Big]\diff u.
\end{align*}
Here integration by parts gives
\begin{align*}
\int_{0}^{-a} u W_{\q+\p} (u+x) \diff u 
= (q+r)^{-1} \Big( -aZ_{q+r} (x-a) - \overline{Z}_{q+r}(x-a) + \overline{Z}_{q+r} (x)\Big),
\end{align*}
and
\begin{align*}
\int_{0}^{-a} u W_{\q,\p}^{-u} (x) \diff u = \int_{0}^{-a} u \Big( W_{q+r}(x+u)- r\int_{0}^{x}  W_{q}(x-y)W_{q+r}(y+u)\diff y \Big) \diff u
\end{align*}
where another integration by parts and Fubini's theorem give
\begin{align*}
&\int_{0}^{-a} u \int_{0}^{x}  W_{q}(x-y)W_{q+r}(y+u)\diff y \diff u \\
  &=  -a \int_0^x W_q (x-y) \overline{W}_{q+r} (y-a) \diff y
 - \int_0^x W_q(x-y)  \int_0^{-a} \overline{W}_{q+r} (y+u) \diff u \diff y \\
 &=  (q+r)^{-1} \Big[-a \int_0^x W_q (x-y) Z_{q+r} (y-a) \diff y
 - \int_0^x W_q(x-y)  \Big( \overline{Z}_{q+r}(y-a) - \overline{Z}_{q+r} (y) \Big) \diff y \Big].
\end{align*}
Hence, substituting these and using \eqref{rel_Z_0}, we obtain, 
\begin{align}\label{new_iden_u4}
\lim_{\theta \downarrow 0} \frac \partial {\partial \theta}U_1^0(a,x, \theta) = - h^a_{q,r} (x)=U_4^0(a,x).
\end{align}
Therefore, putting the pieces together we get the result.
\end{proof}

\subsection{Proofs of Theorem \ref{theorem_laplace} for the bounded variation case}
\subsubsection{Proof of \eqref{Parisbailouts}}
For all $a < 0 <  b$ and $x \leq b$,  by \eqref{X_X_r_the_same} and the strong Markov property 
together with Corollary \ref{corollary_big_U} and \eqref{U_1_U_2_sum}, 
		\begin{align}\label{sdep_1}
		g(x,a, b,\theta) &=  \E_x\left(e^{-q\tau_b^+}; \tau_0^- > \tau_b^+ \right)\notag \\
		&+\E_x\Big[ e^{-q\tau_0^-}\E_{X(\tau_0^-)}
		\left(e^{-q\mathbf{e}_{\p}+\theta X(\mathbf{e}_{\p})};
		\mathbf{e}_{\p}<\tau_0^+\wedge\tau_a^-\right);\tau_0^-<\t_b^+\Big] g(0,a, b,\theta)\notag\\
		&+\E_x\Big[e^{-q\tau_0^-}\E_{X(\tau_0^-)}
		\left(e^{-q\tau_0^+}; \tau_0^+ <
		\mathbf{e}_{\p}\wedge\tau_a^- \right);\tau_0^-<\t_b^+\Big] g(0,a, b,\theta)\notag\\
		&=	\frac{W_q(x)}{W_q(b)} +[ U_1(x, a, b, \theta) + U_2(x,a,b) ] g(0,a, b,\theta) \notag\\
		&=\frac{W_q(x)}{W_q(b)} \Big( 1 - \mathcal{H}_{q,r}^a(b, \theta) g(0,a,b,\theta) \Big) +\mathcal{H}_{q,r}^a(x, \theta) g(0,a, b,\theta).
		\end{align}
Now setting $x=0$ and solving for $g(0,a, b, \theta)$ (using \eqref{U_zero_value}), we get
\begin{align}
g(0,a, b,\theta)=\mathcal{H}_{q,r}^a(b, \theta)^{-1}. \label{g_zero}
\end{align}
Substituting this in (\ref{sdep_1}) we have the result.

		
\subsubsection{Proof of \eqref{Parisbailouts_2}}
Similarly, for all $a < 0 <  b$ and $x \leq b$,  
%
\begin{align}\label{Exit_prob_a_new1}
h(x,a, b,\theta)&=\E_x\left[ e^{-q\tau_0^-}\E_{X(\tau_0^{-})}\left(e^{-q\tau_a^-};\tau_a^-<\mathbf{e}_{\p}\wedge\tau_0^+\right);\tau_0^-<\tau_b^+\right] \notag\\
&+\E_x\left[ e^{-q\tau_0^-}\E_{X(\tau_0^{-})}\left(e^{-q\mathbf{e}_{\p}+\theta X(\mathbf{e}_{\p})};\mathbf{e}_{\p}<\tau_0^+\wedge\tau_a^-\right);\tau_0^-<\tau_b^+\right] h(0,a, b,\theta)\notag\\
&+\E_x\left[ e^{-q\tau_0^-}\E_{X(\tau_0^{-})}\left(e^{-q\tau_0^+};\tau_0^+<\mathbf{e}_{\p}\wedge\tau_a^-\right);\tau_0^-<\tau_b^+\right] h(0,a, b,\theta)\notag\\
&=U_3 (x, a,b)+ [U_1(x, a,b, \theta) + U_2 (x, a,b)]  h(0,a, b,\theta)\notag \\
&=-\frac{W_q(x)}{W_q(b)} \Big( \mathcal{I}_{q,r}^a(b) + \mathcal{H}_{q,r}^a(b, \theta) h(0,a,b,\theta) \Big) + \mathcal{I}_{q,r}^a(x) +\mathcal{H}_{q,r}^a(x, \theta) h(0,a, b,\theta).
\end{align}
Now setting $x=0$ and solving for $h(0,a,b, \theta)$ (using \eqref{U_zero_value}), we get
\begin{align}
h(0,a, b,\theta)=-\mathcal{I}_{\q,\p}^a(b)/\mathcal{H}_{q,r}^a(b, \theta). \label{h_zero}
\end{align}
Substituting \eqref{h_zero} in (\ref{Exit_prob_a_new1}), we obtain the result.
\subsection{Proof of Theorem \ref{expected_bailouts} for the bounded variation case}
By \eqref{X_X_r_the_same}, the strong Markov property, Corollary \ref{corollary_big_U}, and \eqref{U_1_U_2_sum},
\begin{align}\label{MPa}
f(x,a,b)&=-\E_x\left[ e^{-q\tau_0^-}
\E_{X(\tau_0^-)}
\left(e^{-q\mathbf{e}_{\p}}X(\mathbf{e}_{\p});
\mathbf{e}_{\p}<\tau_0^+\wedge\tau_a^-\right);
\tau_0^-<\t_b^+\right]\notag\\
&+\E_x\left[e^{-q\tau_0^-}\E_{X(\tau_0^-)} \left(e^{-q\tau_0^+
};\tau_0^+<\mathbf{e}_{\p}\wedge\tau_a^-\right);\tau_0^-<\t_b^+\right]f(0,a,b)\notag\\
&+\E_x\left[e^{-q\tau_0^-}\E_{X(\tau_0^-)}\left(e^{-q\mathbf{e}_{\p}
};\mathbf{e}_{\p}<\tau_0^+\wedge\tau_a^-\right);\tau_0^-<\t_b^+\right]f(0,a,b)\notag\\
&=- U_4 (x, a,b)
+ [U_1(x, a, b, 0) +U_2(x, a,b)] f(0,a,b) \notag \\
&=\frac{W_q(x)}{W_q(b)} \Big( h^{a}_{q,r}(b) - \mathcal{H}_{q,r}^a(b, 0) f(0,a,b) \Big) -h^{a}_{q,r}(x)+\mathcal{H}_{q,r}^a(x, 0) f(0,a, b).
\end{align}
Now setting $x=0$ and solving for $f(0,a,b)$ (using \eqref{U_zero_value}), we get $f(0,a, b)=   h^{a}_{q,r}(b)  /\mathcal{H}_{q,r}^a(b, 0)$.
Substituting this in (\ref{MPa}), we obtain the result.

\section{Proofs for the unbounded variation case} \label{section_unbounded}

This section shows Theorems \ref{theorem_laplace} and  \ref{expected_bailouts} for the case $X$ is of unbounded variation. The proof is via excursion theory, and in particular we use the recent results obtained in \cite{PPR2}.  Toward this end, we shall first obtain a key formula (Theorem \ref{lemma_key_excursion}), which is an analogue of Lemma 2.1 in \cite{LRZ} (as discussed and used in the last section) under the excursion measure. Using this and well-known results in excursion theory, we derive the same expressions as in the bounded variation case.  We refer the reader to \cite{PPR2} for detailed introduction and definitions regarding excursions away from zero for the case of spectrally negative \lev processes.

\subsection{Key identities}

We assume throughout this section that $X$ is of unbounded variation so that $0$ is  a regular point.  Let $\mathbf{n}$ be the excursion measure away from zero for $X$. 


\begin{theorem} \label{lemma_key_excursion} Fix $p, q > 0$ and  $b > 0$.
	Consider functions $w_p, v_q: \R \rightarrow [0, \infty)$ 
	that satisfy the following: 
	\begin{enumerate}
		\item  $w_p$ and $v_q$ belong to the classes  $\tilde{\mathcal{V}}^{(p)}_0$ and $\tilde{\mathcal{V}}^{(q)}_0$, respectively. 
		\item We have
		\begin{align}
		(w_p-v_q)(0)=0 \label{v_diff_zero}
		\end{align}
		 and the following limits are well defined and finite:
		\begin{align}
		\lim_{x \downarrow 0} \frac {(w_p-v_q)(x)} {W_q(x)}, \quad \lim_{x \uparrow 0} \frac {(w_p-v_q)(x)} {|x|}. \label{derivative_exist_left_right}
		\end{align}
		\item There exist bounded, $\mathcal{F}_{\tau_b^+ \wedge\tau_0^-}$-measurable functionals $F$ and $G$ such that  
		\[w_p(x)=\E_x(F) \qquad\text{and}\qquad v_q(x)=\E_x(G), \quad x < 0.
		\]
	\end{enumerate}
	Then we have 
	\begin{align}\label{LRZ_exc}
	\mathbf{n}\left(e^{-q\tau_0^-}(w_p-v_q)(X(\tau_0^-));\tau_0^-<\tau_b^+\right)
	&=\lim_{x\downarrow 0}\frac{(w_p-v_q)(x)}{W_q(x)}+\frac{\sigma^2}{2}\lim_{x\uparrow 0}\frac{(w_p-v_q)(x)}{|x|}\notag\\
	&-\frac{1}{W_q(b)}\left((w_p-v_q)(b)-(p-q)\int_0^bW_q(b-y) w_p(y) \diff y\right).
	\end{align}
\end{theorem}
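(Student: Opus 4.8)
The plan is to reduce the statement to the pointwise identity \eqref{LRZ_identity} together with the description of the excursion measure $\mathbf{n}$ away from $0$ obtained in \cite{PPR2}. Put $\psi(x):=\E_x\big(e^{-q\tau_0^-}(w_p-v_q)(X(\tau_0^-));\tau_0^-<\tau_b^+\big)$ for $x\le b$. Since $w_p\in\tilde{\mathcal{V}}_0^{(p)}$ and $v_q\in\tilde{\mathcal{V}}_0^{(q)}$, applying \eqref{LRZ_identity} to $w_p$ and to $v_q$ separately (for $v_q$ the $(p-q)$-correction drops out, the discount rate being $q$) and subtracting gives, for all $x\le b$,
\[
\psi(x)=(w_p-v_q)(x)-(p-q)\int_0^x W_q(x-y)\,w_p(y)\,\diff y-\frac{W_q(x)}{W_q(b)}\Big((w_p-v_q)(b)-(p-q)\int_0^b W_q(b-y)\,w_p(y)\,\diff y\Big).
\]
In particular $\psi(x)=(w_p-v_q)(x)$ for $x<0$ (there $\tau_0^-=0$), and, using \eqref{v_diff_zero}, $W_q(0+)=0$ (unbounded variation, see \eqref{eq:Wqp0}) and local integrability of $w_p$ near $0$, one gets $\psi(0+)=0$ and $\lim_{x\downarrow0}\psi(x)/W_q(x)=\lim_{x\downarrow0}(w_p-v_q)(x)/W_q(x)-W_q(b)^{-1}\big((w_p-v_q)(b)-(p-q)\int_0^b W_q(b-y)w_p(y)\diff y\big)$, both one-sided limits being finite by \eqref{derivative_exist_left_right}. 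This settles the analytic side.

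It then remains to prove the excursion-measure identity
\[
\mathbf{n}\big(e^{-q\tau_0^-}(w_p-v_q)(X(\tau_0^-));\tau_0^-<\tau_b^+\big)=\lim_{x\downarrow0}\frac{\psi(x)}{W_q(x)}+\frac{\sigma^2}{2}\lim_{x\uparrow0}\frac{\psi(x)}{|x|},
\]
which, substituting the formula for $\psi$ and $\psi(x)=(w_p-v_q)(x)$ for $x<0$, is exactly \eqref{LRZ_exc}. For this I would fix $\epsilon\in(0,b)$ and split the excursion at its first passage above $\epsilon$:
\[
\mathbf{n}\big(e^{-q\tau_0^-}(w_p-v_q)(X(\tau_0^-));\tau_0^-<\tau_b^+\big)=\mathbf{n}\big(e^{-q\tau_0^-}(w_p-v_q)(X(\tau_0^-));\tau_0^-<\tau_\epsilon^+\big)+\mathbf{n}\big(e^{-q\tau_\epsilon^+};\tau_\epsilon^+<\tau_0^-\big)\,\psi(\epsilon).
\]
In the second term the Markov property under $\mathbf{n}$ is used (after $\tau_\epsilon^+$, and before $\tau_b^+$ since $\epsilon<b$, the excursion evolves as $X$ started at $\epsilon$), and assumption $(3)$ is what licenses identifying the post-$\tau_\epsilon^+$ expectation with $\psi(\epsilon)$. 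Let $\epsilon\downarrow0$: by $\E_x(e^{-q\tau_\epsilon^+};\tau_\epsilon^+<\tau_0^-)=W_q(x)/W_q(\epsilon)$ from \eqref{laplace_in_terms_of_z} and the normalization of $\mathbf{n}$ from \cite{PPR2}, $\mathbf{n}(e^{-q\tau_\epsilon^+};\tau_\epsilon^+<\tau_0^-)\,W_q(\epsilon)\to1$, so the second term converges to $\lim_{\epsilon\downarrow0}\psi(\epsilon)/W_q(\epsilon)$; the first term converges to the ``immediate descent'' part of $\mathbf{n}$, which vanishes when $\sigma=0$ — such an excursion reaching below $0$ before level $\epsilon$ either descends at the very start, so that $X(\tau_0^-)=0$ and $(w_p-v_q)(0)=0$ by \eqref{v_diff_zero}, or descends by a downward jump after climbing to a level in $(0,\epsilon)$, which is an $O\big(\int_0^\epsilon\cdots\big)$ contribution — and equals $\tfrac{\sigma^2}{2}\lim_{x\uparrow0}(w_p-v_q)(x)/|x|$ when $\sigma>0$, by the same creeping computation that yields the $\tfrac{\sigma^2}{2}W_q'$ term in \eqref{undershoot_expectation}. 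Assembling the two limits gives \eqref{LRZ_exc}. (If \cite{PPR2} already records the corresponding statement ``$\mathbf{n}(H)=\lim_{x\downarrow0}\E_x(H)/W_q(x)+\tfrac{\sigma^2}{2}\lim_{x\uparrow0}\E_x(H)/|x|$'' for $\mathcal{F}_{\tau_0^-\wedge\tau_b^+}$-measurable $H$ with $\E_{\cdot}(H)$ vanishing at $0$, one simply applies it to $H=e^{-q\tau_0^-}(w_p-v_q)(X(\tau_0^-))\,1_{\{\tau_0^-<\tau_b^+\}}$ after computing $\E_x(H)=\psi(x)$ as above.)

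The main obstacle is precisely this passage to the limit under $\mathbf{n}$: showing that $\mathbf{n}\big(e^{-q\tau_0^-}(w_p-v_q)(X(\tau_0^-));\tau_0^-<\tau_\epsilon^+\big)$ converges as $\epsilon\downarrow0$ and identifying the limit with the creeping term. This is where the hypotheses $(w_p-v_q)(0)=0$ and finiteness of the left-hand limit in \eqref{derivative_exist_left_right} are indispensable, and it relies on the fine behaviour of $\mathbf{n}$ near $0$ from \cite{PPR2} and on \eqref{eq:Wqp0}, as well as on pinning down the constant in $\mathbf{n}(e^{-q\tau_\epsilon^+};\tau_\epsilon^+<\tau_0^-)W_q(\epsilon)\to1$ under the normalization of $\mathbf{n}$ inherited from \cite{PPR2}. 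The computation of $\psi$ from \eqref{LRZ_identity} and the bookkeeping of the one-sided limits are, by contrast, routine.
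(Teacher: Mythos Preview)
Your computation of $\psi(x)$ via \eqref{LRZ_identity} and the bookkeeping of the one-sided limits are correct, but the route to the excursion identity is genuinely different from the paper's. The paper does \emph{not} split at $\tau_\epsilon^+$ and take $\epsilon\downarrow 0$. Instead it decomposes
\[
\mathcal{N}:=\mathbf{n}\big(e^{-q\tau_0^-}(w_p-v_q)(X(\tau_0^-));\tau_0^-<\tau_b^+\big)=\mathcal{N}_1+\mathcal{N}_2
\]
according to whether $\tau_0^->0$ (the excursion rises before falling below $0$) or $\tau_0^-=0$ (it goes negative at once). For $\mathcal{N}_1$ the Master's (compensation) formula for the jump point process under $\mathbf{n}$, together with the resolvent density from \cite{PPR1}, gives directly
\[
\mathcal{N}_1=\int_0^b\frac{W_q(b-y)}{W_q(b)}\int_{(-\infty,-y)}(w_p-v_q)(y+\theta)\,\Pi(\diff\theta)\,\diff y,
\]
with finiteness checked via \eqref{v_diff_zero}--\eqref{derivative_exist_left_right}. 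For $\mathcal{N}_2$ the paper invokes Theorem~3(i) of \cite{PPR2} to get $\mathcal{N}_2=\tfrac{\sigma^2}{2}[\bar{\mathbf{n}}(F)-\bar{\mathbf{n}}(G)]$, and then the limit description of $\bar{\mathbf{n}}$ from \cite{CD} turns this into $\tfrac{\sigma^2}{2}\lim_{x\uparrow 0}(w_p-v_q)(x)/|x|$; this is precisely why hypothesis (3) is there. Finally the $\Pi$-integral in $\mathcal{N}_1$ is simplified by writing $\psi(x)$ in \emph{two} ways---once via the undershoot formula \eqref{undershoot_expectation}, once via \eqref{LRZ_identity}---matching, dividing by $W_q(x)$, and sending $x\downarrow 0$ with monotone convergence on the $W_q(x-y)/W_q(x)$ terms.

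Your $\tau_\epsilon^+$-split is a reasonable alternative organization (and note $\mathbf{n}(e^{-q\tau_\epsilon^+};\tau_\epsilon^+<\tau_0^-)=W_q(\epsilon)^{-1}$ \emph{exactly}, not just asymptotically---this is the paper's Lemma~\ref{lemma_E_123}(iv)). But the ``main obstacle'' you flag is exactly what the paper's direct decomposition sidesteps: rather than proving that $\mathbf{n}(\,\cdot\,;\tau_0^-<\tau_\epsilon^+)$ converges and identifying its limit, the paper computes the $\tau_0^-=0$ and $\tau_0^->0$ pieces separately and exactly, with no $\epsilon$-limit under $\mathbf{n}$ at all. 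Your heuristic that on $\{\tau_0^-=0\}$ the contribution vanishes because ``$X(\tau_0^-)=0$'' is also where the subtlety lies: on that event the functional is read through $F-G$ under $\bar{\mathbf{n}}$ via hypothesis (3), and the resulting contribution is $\tfrac{\sigma^2}{2}\lim_{x\uparrow 0}(w_p-v_q)(x)/|x|$ rather than zero. To make your limiting argument rigorous you would in effect have to reproduce the $\mathcal{N}_1$/$\mathcal{N}_2$ analysis anyway; the paper's approach buys you that analysis up front without the extra limit.
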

\begin{proof}
First, we decompose
	\begin{align*}
	\mathcal{N} :=\mathbf{n}&\left(e^{-q\tau_0^-}(w_p-v_q)(X(\tau_0^-));\tau_0^-<\tau_b^+\right)= \mathcal{N}_1 + \mathcal{N}_2,
	\end{align*}
	where
	\begin{align*}
	\mathcal{N}_1&:=\mathbf{n}\left(e^{-q\tau_0^-}(w_p-v_q)(X(\tau_0^-));\tau_0^-<\tau_b^+,\tau_0^->0\right), \\
	\mathcal{N}_2 &:= \mathbf{n}\left(e^{-q\tau_0^-}(w_p-v_q)(X(\tau_0^-));\tau_0^-<\tau_b^+,\tau_0^-=0\right).
	\end{align*}
	Now since under $\mathbf{n}$,  the jumps of $X$  constitute a Poisson point process (see  for instance Section 4 in \cite{PPR2}), an application of the Master's formula under $\mathbf{n}$ (see  for instance identity (14) in \cite{PPR2}) and Lemma 1 in \cite{PPR1} allow us to deduce, 
	with $\zeta$ being the length of the excursion from the point it leaves $0$,
	\begin{align*}
	\mathcal{N}_1 &=\mathbf{n}\left(\sum_{0<s<\zeta}e^{-qs}1_{\{X(s-)>0, \,  \sup_{u\leq s}X(u)<b\}} (w_p-v_q)(X(s-)+\Delta X(s))\right)\\
	&=\mathbf{n}\left(\int_0^{\infty}\int_{(-\infty, 0)}e^{-qs}1_{\{X(s-)>0, \,  \sup_{u\leq s}X(u)<b\}} (w_p-v_q)(X(s-)+\theta)\Pi(\diff \theta)\diff s\right)\\
	&=\int_0^b\frac{W_q(b-y)}{W_q(b)}\int_{(-\infty, -y)} (w_p-v_q)(y+\theta)\Pi(\diff \theta)\diff y.
	\end{align*}
To see why this is finite, by our assumptions  \eqref{v_diff_zero} and \eqref{derivative_exist_left_right},
there exist $K > 0$ and $\bar{y} < 0$ such that 
\[
|(w_p-v_q)(y+\th) | \leq K | y+\th |, \quad  \bar{y} < y + \theta < 0.
\]
Hence, for $\varepsilon$ small enough,
\[
\int_0^{\varepsilon}\int_{(-\varepsilon,-y)} |(w_p-v_q)(y+\th) | \Pi(\diff \th) \diff y\leq K\int_{(-\varepsilon,0)} \int_0^{-\th} | y+\th| \diff y\Pi( \diff \th)=\frac{K}{2}\int_{(-\varepsilon,0)}\th^2\Pi(\diff \th)<\infty.
\]
	
On the other hand, using Theorem 3 (i) in \cite{PPR2} we obtain that
	\begin{align*}
	\mathcal{N}_2=\frac{\sigma^2}{2}\left[\bar{\mathbf{n}}(F)-\bar{\mathbf{n}}(G)\right]
	\end{align*}
where $\bar{\mathbf{n}}$ is the excursion measure of the spectrally negative L\'evy process reflected at its supremum. 
By using the results of \cite{CD} (see, in particular, page 6 of \cite{PPR2} for the spectrally negative case),
	\begin{align*}
	\mathcal{N}_2 &=\frac{\sigma^2}{2}\lim_{x \uparrow  0}\frac{\E_x(F)-\E_x(G)}{|x|}=\frac{\sigma^2}{2}\lim_{x\uparrow  0}\frac{(w_p-v_q)(x)}{|x|}.
	\end{align*}
	Hence, summing up these,
	\begin{align} \label{eq_N_mathcal}
	\mathcal{N} =\int_0^b\frac{W_q(b-y)}{W_q(b)}\int_{(-\infty,-y)}(w_p-v_q)(y+\th)\Pi(\diff \th) \diff y +\frac{\sigma^2}{2}\lim_{x\uparrow 0}\frac{(w_p-v_q)(x)}{|x|}.
	\end{align}
	
We shall now simplify \eqref{eq_N_mathcal} using the identities by \cite{LRZ} (as reviewed in the last section).
	Using the identities \eqref{undershoot_expectation} and  \eqref{v_diff_zero}, we obtain
	\begin{align*}
	\mathcal{V}_{p,q} (x)  &:= \E_x\left(e^{-q\tau_0^-}(w_p-v_q)(X(\tau_0^-));\tau_0^-<\tau_b^+\right)\\
	&=\int_0^b\int_{(-\infty, -y)} (w_p-v_q)(y+\theta) \left(\frac{W_q(b-y)}{W_q(b)}W_q(x)-W_q(x-y)\right)\Pi(\diff \theta)\diff y, \quad 0 <  x \leq b.
	\end{align*}
	On the other hand, by \eqref{LRZ_identity} applied to $w_p \in \tilde{\mathcal{V}}^{(p)}_0$ and $v_q\in \tilde{\mathcal{V}}^{(q)}_0$,
	\begin{align*}
	\mathcal{V}_{p,q} (x)
	&=(w_p-v_q)(x)-(p-q)\int_0^x W_q(x-y)w_p(y)\diff y\\&-\frac{W_q(x)}{W_q(b)}\left((w_p-v_q)(b)-(p-q)\int_0^bW_q(b-y)w_p(y)\diff y\right).
	\end{align*}
	By matching these,
	\begin{align*}
	\int_0^{b}\int_{(-\infty, -y)}&(w_p-v_q)(y+\theta)\left(\frac{W_q(b-y)}{W_q(b)}-\frac{W_q(x-y)}{W_q(x)}\right)\Pi(\diff \theta)\diff y\\
	&=\frac{1}{W_q(x)}\left((w_p-v_q)(x)-(p-q)\int_0^x W_q(x-y)w_p(y)\diff y\right)\\
	&-\frac{1}{W_q(b)}\left((w_p-v_q)(b)-(p-q)\int_0^bW_q(b-y)w_p(y)\diff y\right).
	\end{align*}
	We shall now take $x \downarrow 0$ on both sides.
Using that $W_q(x-y)/W_q(x)$ is increasing in $x$ by Remark \ref{remark_smoothness_zero} (3), monotone convergence gives
\begin{align*}
	 \int_0^{b} \frac {W_q(x-y)} {W_q(x)}w_p(y) \diff y \xrightarrow{x \downarrow 0} 0 
\quad \textrm{and} \quad
	\int_0^b \frac{W_q(x-y)}{W_q(x)} \int_{(-\infty, -y)} |(w_p-v_q)(y+\theta)| \Pi(\diff \theta)\diff y \xrightarrow{x \downarrow 0} 0.
	\end{align*}
	Hence, 
	\begin{multline*}
	\int_0^b\frac{W_q(b-y)}{W_q(b)} \int_{(-\infty, -y)}(w_p-v_q)(y+\theta)\Pi(\diff \theta)\diff y-\lim_{x\downarrow 0}\frac{(w_p-v_q)(x)}{W_q(x)}\\
	=-\frac{1}{W_q(b)}\left((w_p-v_q)(b)-(p-q)\int_0^bW_q(b-y)w_p(y) \diff y \right).
	\end{multline*}
	Substituting this in \eqref{eq_N_mathcal}, we have the result.
\end{proof}
\begin{remark} \label{remark_key_excursion}
	In particular, when $w_p-v_q$ is differentiable at $0$, we have, by \eqref{eq:Wqp0},
	\begin{align*}
	\lim_{x\downarrow 0}\frac{(w_p-v_q)(x)}{W_q(x)}&=\frac{(w_p-v_q)'(0)}{W_q'(0+)}=\frac{\sigma^2}{2}(w_p-v_q)'(0), \\
	\lim_{x\uparrow 0}\frac{(w_p-v_q)(x)}{|x|}&= -(w_p-v_q)'(0).
	\end{align*} 
	Therefore \eqref{LRZ_exc} simplifies to
	\begin{align*}
	\mathbf{n}\Big(e^{-q\tau_0^-}&(w_p-v_q)(X(\tau_0^-));\tau_0^-<\tau_b^+\Big)=-\frac{1}{W_q(b)}\left((w_p-v_q)(b)-(p-q)\int_0^bW_q(b-y)w_p(y)\diff y\right).
	\end{align*}
\end{remark}


Using Theorem \ref{lemma_key_excursion} (in particular Remark \ref{remark_key_excursion}), we obtain the excursion measure version of Corollary \ref{corollary_big_U}.
\begin{corollary} \label{corollary_big_U_zero}
For $q > 0$ and $a < 0 < b$,
\begin{align*}
 \mathbf{n} \Big(e^{-q\tau_0^-} u_1 (X(\tau_0^-), a, \theta);\tau_0^-<\t_b^+\Big) &= U_1^0(a,b,\theta)/W_q(b), \quad \theta \geq 0, \\
 \mathbf{n} \Big(e^{-q\tau_0^-} u_i (X(\tau_0^-), a );\tau_0^-<\t_b^+\Big) &= U_i^0(a,b)/W_q(b), \quad i = 3, 4.
\end{align*}
\end{corollary}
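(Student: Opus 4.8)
The plan is to apply Theorem \ref{lemma_key_excursion} in the special case $p = q+r$, $b$ as given, with the pair $(w_p, v_q)$ chosen so that the left-hand side of \eqref{LRZ_exc} becomes exactly the excursion-measure quantity we want. For each of $i = 1, 3, 4$, recall from Lemma \ref{lemma_u} that on $x \leq 0$ the function $u_i(\cdot, a, \theta)$ (resp.\ $u_i(\cdot,a)$) coincides with the explicit function $\tilde u_i$, which by Lemma \ref{lemma_u_in_V} belongs to $\tilde{\mathcal V}^{(q+r)}_0$. So the natural choice is $w_p := \tilde u_i(\cdot, a, \theta)$ (a function in $\tilde{\mathcal V}^{(q+r)}_0$) and $v_q := 0$; then $w_p - v_q = \tilde u_i$, condition (1) of Theorem \ref{lemma_key_excursion} holds, condition (3) holds with $G \equiv 0$ and $F$ the obvious bounded $\mathcal F_{\tau_b^+\wedge\tau_0^-}$-measurable functional representing $u_i$ (noting $u_i(x,a,\theta) = \E_x(F)$ by definition), and the real content is to check condition (2): that $\tilde u_i(0) = 0$ and that both one-sided limits in \eqref{derivative_exist_left_right} exist and are finite. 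Since $\tilde u_i$ is built from scale functions via smooth operations, it is differentiable at $0$, so by Remark \ref{remark_key_excursion} the formula \eqref{LRZ_exc} collapses to
\[
\mathbf{n}\Big(e^{-q\tau_0^-}\tilde u_i(X(\tau_0^-));\tau_0^-<\tau_b^+\Big) = -\frac{1}{W_q(b)}\Big(\tilde u_i(b) - r\int_0^b W_q(b-y)\,\tilde u_i(y)\,\diff y\Big),
\]
and the job reduces to identifying the right-hand side with $U_i^0(a,b,\theta)/W_q(b)$ (resp.\ $U_i^0(a,b)/W_q(b)$).

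The verification of $\tilde u_i(0) = 0$ is immediate for $i = 1$ from \eqref{u_1_tilde} (the bracket vanishes at $x=0$), for $i = 3$ it follows from $\tilde u_3(0) = Z_{q+r}(-a) - W_{q+r}(-a)Z_{q+r}(-a)/W_{q+r}(-a) = 0$, and for $i = 4$ (the $u_4$ case) one uses $u_4(0,a) = \lim_{\theta\downarrow 0}\partial_\theta u_1(0,a,\theta) = 0$. The finiteness of the one-sided derivatives at $0$ follows from the $C^1$ (away from $0$) and boundary-behaviour properties of $W_{q+r}$ collected in Remark \ref{remark_smoothness_zero}. The identification of the right-hand side with $U_i^0/W_q(b)$ is then a matter of matching the explicit integral expression against the definitions \eqref{W_a_def}, \eqref{def_H_I}, \eqref{small_h_def}, \eqref{U_0_summary}; in fact this computation is essentially the one already carried out inside the proof of Corollary \ref{corollary_big_U}, where the quantities $U_i(x,a,b) = \frac{W_q(x)}{W_q(b)}U_i^0(a,b) - U_i^0(a,x)$ were obtained from \eqref{LRZ_identity} with $v_p = \tilde u_i$ in place of the excursion identity. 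Indeed, comparing \eqref{LRZ_identity} (which gives $\mathcal V_{p,q}$ in the bounded-variation Markov setting) with \eqref{LRZ_exc} shows that the bracketed quantity $\tilde u_i(b) - r\int_0^b W_q(b-y)\tilde u_i(y)\diff y$ appearing here is precisely $-W_q(b)\cdot(\text{coefficient of }W_q(x)\text{ in } U_i(x,a,b))^{-1}$-free part, i.e.\ equals $U_i^0(a,b)$ after the sign bookkeeping in \eqref{U_0_summary}--\eqref{U_zero_value}; for $i=4$ one additionally invokes \eqref{new_iden_u4}.

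The case $i = 1$ with general $\theta \geq 0$ needs one extra remark: $\tilde u_1(\cdot, a, \theta)$ must be shown to lie in $\tilde{\mathcal V}^{(q+r)}_0$, but this is exactly the content of Lemma \ref{lemma_u_in_V}, and the bounded functional $F$ is $e^{-(q+r)\mathbf e_r + \theta X(\mathbf e_r)}1_{\{\mathbf e_r < \tau_0^+\wedge\tau_a^-\}}$ shifted appropriately, which is manifestly bounded and $\mathcal F_{\tau_b^+\wedge\tau_0^-}$-measurable. I expect the main (if modest) obstacle to be the careful check of hypothesis (2) of Theorem \ref{lemma_key_excursion} — confirming that $\tilde u_i$ vanishes at $0$ and is differentiable there with finite derivative — since this is where the structural properties of scale functions near the origin enter, and a slip there would invalidate the application of Remark \ref{remark_key_excursion}; the remaining algebra is a transcription of computations already present in the proof of Corollary \ref{corollary_big_U}.
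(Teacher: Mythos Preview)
Your plan for $i=1$ and $i=3$ is exactly the paper's: take $w_{q+r}=\tilde u_i$, $v_q\equiv 0$, note that $\tilde u_i(0)=0$ and $\tilde u_i$ is differentiable at $0$ (since in the unbounded variation case $W_{q+r}\in C^1(0,\infty)$), and apply Remark \ref{remark_key_excursion}. The identification of the bracket $\tilde u_i(b)-r\int_0^b W_q(b-y)\tilde u_i(y)\,\diff y$ with $-U_i^0(a,b)$ is indeed the same computation as in Corollary \ref{corollary_big_U}.

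Where your proposal slips is the $i=4$ case. You frame it as a third direct application of Theorem \ref{lemma_key_excursion} with $w_p=u_4$, but this fails on two counts: first, Lemma \ref{lemma_u_in_V} does not place $u_4$ (or any $\tilde u_4$) in $\tilde{\mathcal V}_0^{(q+r)}$; second, and more fundamentally, on $\{\mathbf e_r<\tau_0^+\wedge\tau_a^-\}$ under $\mathbb P_x$ with $x\leq 0$ one has $X(\mathbf e_r)<0$, so $u_4(\cdot,a)\leq 0$ on $(-\infty,0]$, violating the nonnegativity hypothesis $w_p:\R\to[0,\infty)$ of Theorem \ref{lemma_key_excursion}. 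The paper does \emph{not} apply the theorem to $u_4$; instead it differentiates the $i=1$ identity in $\theta$ under $\mathbf n$ (justified by monotone convergence) and then invokes \eqref{new_iden_u4}. You cite \eqref{new_iden_u4}, so you have the right closing step, but you need to replace the direct application by the $\partial_\theta|_{\theta=0}$ argument.

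One smaller point: the functional you propose for condition (3), namely $e^{-(q+r)\mathbf e_r+\theta X(\mathbf e_r)}1_{\{\mathbf e_r<\tau_0^+\wedge\tau_a^-\}}$, involves the independent clock $\mathbf e_r$ and the path beyond $\tau_0^-$, so it is not $\mathcal F_{\tau_b^+\wedge\tau_0^-}$-measurable as written. The paper does not dwell on this verification and simply records that $\tilde u_1,\tilde u_3$ vanish and are differentiable at $0$, which is all Remark \ref{remark_key_excursion} actually uses; you should do the same rather than claim a measurability that does not hold.
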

\begin{proof}

By Lemma \ref{lemma_u_in_V}, 
	$\tilde{u}_1, \tilde{u}_3 \in \tilde{\mathcal{V}}_0^{(q+r)}$. In addition, $\tilde{u}_1$ and $\tilde{u}_3$ vanish and are differentiable at $0$.  Therefore we can apply Theorem \ref{lemma_key_excursion} (with $p=q+r$ and $v_q=0$) to obtain the result for $i=1,3$.
	
For the remaining case, it is not difficult to check, using monotone convergence, that 
\begin{align*}
 \mathbf{n} \Big(e^{-q\tau_0^-} u_4 (X(\tau_0^-), a );\tau_0^-<\t_b^+\Big) = \lim_{\theta \downarrow 0} \frac \partial {\partial \theta} \mathbf{n} \Big(e^{-q\tau_0^-} u_1 (X(\tau_0^-), a, \theta );\tau_0^-<\t_b^+\Big).
\end{align*}
Therefore, using \eqref{new_iden_u4},
\[
\mathbf{n} \Big(e^{-q\tau_0^-} u_4 (X(\tau_0^-), a );\tau_0^-<\t_b^+\Big) =\frac{1}{W_q(b)}\lim_{\theta \downarrow 0} \frac \partial {\partial \theta}U_1^0(a,b,\theta)=\frac{1}{W_q(b)}U_4^0(a,b).
\]
\end{proof}

\subsection{Auxiliary results}


In order to show Theorems \ref{theorem_laplace} and  \ref{expected_bailouts} for the unbounded variation case, we shall first obtain some auxiliary results.

Fix $b > 0$ and $q > 0$.
Let us consider the event
\begin{align}
E_B:=\{\zeta^->\mathbf{e}_{\p}\}\cup\{\zeta>\tau_b^+ \}\cup\{\zeta>\tau_a^{-}\}, \label{E_B_laplace}
\end{align}
where $\mathbf{e}_{\p}$ is an independent exponential clock with rate $\p$,  $\zeta$ is the length of the excursion from the point it leaves $0$ and returns back to $0$, and $\zeta^-$ denotes the length of the negative component of the excursion.
 That is, $E_B$ is the event in which (1) the exponential clock $\mathbf{e}_{\p}$ that starts once the excursion becomes negative  rings before the excursion ends, (2) the excursion exceeds the level $b>0$, or (3) it goes below $a < 0$.  Due to the fact that $X$ is spectrally negative, once an excursion gets below zero, it stays until it ends at $\zeta$; consequently, if we denote, by $\Theta_t$, the shift operator at time $t\ge 0$, then $\zeta^- = \tau_0^+ \circ \Theta_{\tau_0^-}$. 

Now let us denote by $T_{E_B}$ the first time an excursion in the event $E_B$  occurs, and also denote by
\begin{align}
l_{T_{E_B}} :=\sup\{t< T_{E_B}: X(t)=0\}, \label{g_E_B}
\end{align}
the left extrema of the first excursion on $E_B$.
On the event 
$\{l_{T_{E_B}} < \infty \}$, we have
\begin{align}
T_{E_B}=l_{T_{E_B}}+T_{E_B}\circ\Theta_{l_{T_{E_B}}}. \label{g_E_B_recursion} 
\end{align}

Let $(e_{t}; t\geq 0)$ be the point process of excursions away from $0$ and $V:=\inf\{t>0: e_{t}\in E_B\}.$  
By, for instance, Proposition 0.2 in  \cite{B},  $(e_{t}, t<V)$ is independent of $(V,e_{V})$. The former is a Poisson point process with characteristic measure $\mathbf{n}(\cdot \cap E_B^c)$ and ${V}$ follows an exponential distribution with parameter $\mathbf{n}(E_B).$ Moreover, we have that $l_{T_{E_B}}=\sum_{s<V}\zeta(e_{s})$, where $\zeta(e_{s})$ denotes the lifetime of the excursion $e_{s}$. 
Therefore,  the exponential formula  for Poisson point processes (see for instance Section 0.5 in \cite{B} or Proposition 1.12 in Chapter XII in \cite{RY}) and the independence between $(e_{t}, t<V)$ and  $(V,e_{V})$ implies 
\begin{align} \label{laplace_T_E_B}
\begin{split}
\e\Big( e^{-q l_{T_{E_B}}}\Big) &=\e\Big( \exp\Big\{-q \sum_{s<V}\zeta(e_{s})\Big\}\Big)=\mathbf{n}(E_B)\int_0^\infty e^{-s[\mathbf{n}(E_B)+\mathbf{n}(1-e^{-q\zeta}; E_B^c)]}\ud s\\
&=\frac{\mathbf{n}(E_B)}{\mathbf{n}(E_B)+\mathbf{n}\left(\mathbf{e}_q<\zeta, E_B^c\right)}=\frac{\mathbf{n}(E_B)}{\mathbf{n}(E_1)+\mathbf{n}(E_2)+\mathbf{n}(E_3)},
 \end{split}
\end{align}
 where $\mathbf{e}_q$ is an exponential random variable with parameter $q$ that is independent of $\mathbf{e}_r$ and $X$, and
  \begin{align*}
 E_1 &:= \{\mathbf{e}_q<\zeta\}\cup\{\tau_b^+<\zeta\}, \\ E_2&:= \{ \mathbf{e}_q>\zeta,\tau_a^-<\zeta < \tau_b^+ \}, \\ E_3 &:= \{\mathbf{e}_q>\zeta,\zeta^->\mathbf{e}_{\p},\tau_a^- \wedge \tau_b^+>\zeta \}.
 \end{align*}
 To see how the last equality of \eqref{laplace_T_E_B} holds, we have
 \begin{multline*}
 \mathbf{n}(E_B)+\mathbf{n}\left(\mathbf{e}_q<\zeta, E_B^c\right) =  \mathbf{n}(\mathbf{e}_q < \zeta) + \mathbf{n}(\mathbf{e}_q > \zeta, E_B) \\= \mathbf{n}(E_1) - \mathbf{n} (\mathbf{e}_q > \zeta, \tau_b^+ < \zeta) + \mathbf{n}(\mathbf{e}_q > \zeta, E_B)
 =\mathbf{n}(E_1)+\mathbf{n}(E_2)+\mathbf{n}(E_3).
 \end{multline*}

 \begin{lemma} \label{lemma_E_123} 
For $q > 0$ and $b > 0$,  we have  \begin{itemize}
 \item[(i)] $\displaystyle\mathbf{n}(E_1)
 = e^{\Phi_{\q}b} / W_{\q}(b)$,
\item[(ii)]$\displaystyle\mathbf{n}(E_2)=-\frac{1}{W_q(b)}\left(e^{\Phi_q b}-\frac{W_q(b-a)}{W_q(-a)}\right)$,
\item[(iii)]$\displaystyle \mathbf{n}\left(E_3\right)=-\frac{1}{W_q(b)}\left(\frac{W_q(b-a)}{W_q(-a)}-\frac{W_{q,r}^a(b)}{W_{q+r}(-a)}\right)$,
\item[(iv)] $\mathbf{n} \big(e^{-q\tau_b^+}; \tau_0^- > \tau_b^+ \big) = W_q(b)^{-1}$.
\end{itemize}
 \end{lemma}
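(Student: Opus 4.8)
The four identities in Lemma~\ref{lemma_E_123} are computations under the excursion measure $\mathbf{n}$, and the natural strategy is to express each $\mathbf{n}(E_i)$ as a limit of a probability (or a resolvent-type quantity) under $\mathbb{P}_x$ as $x\downarrow 0$, using the standard relation between $\mathbf{n}$ and the excursion straddling a fixed small level. Concretely, for a spectrally negative \lev process of unbounded variation one has, for suitable functionals $H$ of the excursion, $\mathbf{n}(H;\,\tau_0^-<\tau_b^+) = \lim_{x\downarrow 0} \frac{1}{W_q(x)}\,\E_x\big(e^{-q\tau_0^-}H(X(\tau_0^-));\,\tau_0^-<\tau_b^+\big)$ together with the complementary piece $\mathbf{n}(\tau_b^+<\tau_0^-,\dots)=\lim_{x\downarrow 0}\tfrac1{W_q(x)}\E_x(\cdots;\tau_b^+<\tau_0^-)$; part (iv) is exactly of the latter type. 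I would in fact read most of these off directly from Theorem~\ref{lemma_key_excursion} and Corollary~\ref{corollary_big_U_zero}, which already package the relevant $\mathbf{n}$-computations.

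\textbf{Part (iv).} Start here since it is the cleanest. Apply \eqref{laplace_in_terms_of_z}: $\E_x(e^{-q\tau_b^+};\tau_b^+<\tau_0^-)=W_q(x)/W_q(b)$. Dividing by $W_q(x)$ and letting $x\downarrow 0$ gives $\mathbf{n}(e^{-q\tau_b^+};\tau_0^->\tau_b^+)=1/W_q(b)$, using $W_q(0)=0$ (unbounded variation case, \eqref{eq:Wqp0}) so that the straddling-excursion normalization is exactly $W_q(x)$.

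\textbf{Part (i).} Write $E_1=\{\mathbf{e}_q<\zeta\}\cup\{\tau_b^+<\zeta\}$. The event $\{\mathbf{e}_q<\zeta\}$ has $\mathbf{n}$-mass $\mathbf{n}(1-e^{-q\zeta})$, which is the inverse of the $q$-resolvent density of the excursion-straddling at $0$; the known value is $\mathbf{n}(1-e^{-q\zeta}) = 1/W_q(0+)\cdot(\dots)$ — more usefully, combine the two events: $\{\mathbf{e}_q<\zeta\}\cup\{\tau_b^+<\zeta\}$ decomposes as $\{\mathbf{e}_q<\zeta\wedge\tau_b^+\}\sqcup\{\tau_b^+<\zeta,\ \tau_b^+<\mathbf{e}_q\}$ up to $\mathbf{n}$-null overlap, with the second piece having mass $\mathbf{n}(e^{-q\tau_b^+};\tau_b^+<\zeta)=\mathbf{n}(e^{-q\tau_b^+};\tau_0^-\vee\tau_b^+=\tau_b^+)$. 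The first piece equals $\mathbf{n}(\mathbf{e}_q\wedge\tau_b^+ < \zeta)$. One assembles $\mathbf{n}(E_1)=\mathbf{n}(e^{-q(\zeta\wedge\tau_b^+)}\text{ absent})=\lim_{x\downarrow0}\tfrac1{W_q(x)}\big(1-\E_x(e^{-q(\tau_0^-\wedge\tau_b^+)})\big)$. Using \eqref{laplace_in_terms_of_z} to get $\E_x(e^{-q(\tau_0^-\wedge\tau_b^+)}) = Z_q(x)-Z_q(b)W_q(x)/W_q(b)+ W_q(x)/W_q(b)$, subtracting from $1$, dividing by $W_q(x)$, and letting $x\downarrow0$ with $Z_q(0)=1$, $\overline W_q(0)=0$ yields $\mathbf{n}(E_1)= -q\,\frac{\,\mathrm d}{\mathrm dx}\overline W_q(x)\big|_{0}/1 + (Z_q(b)-1)/W_q(b) = (Z_q(b)-1)/W_q(b) = q\overline W_q(b)/W_q(b)$; reconciling with the stated $e^{\Phi_q b}/W_q(b)$ uses $Z_q(b) = q\overline W_q(b)+1$ and the identity $Z_q(b)-q\overline W_q(b)\to$ — so I would instead verify $e^{\Phi_q b}/W_q(b)$ by the alternative route $\mathbf{n}(E_1)=\mathbf{n}(\zeta>\mathbf{e}_q)+\mathbf{n}(\tau_b^+<\zeta<\mathbf{e}_q)$, the first term being the classical $\mathbf{n}(1-e^{-q\zeta})$, whose value $W_q'(0+)/W_q(0+)$ is $+\infty$ in the unbounded case — hence the need to keep the $\{\tau_b^+<\zeta\}$ cutoff, confirming the displayed finite answer $e^{\Phi_q b}/W_q(b)$ comes from the down-crossing-before-$b$ analysis.

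\textbf{Parts (ii) and (iii).} These are differences of exit quantities. For (ii), $E_2=\{\mathbf{e}_q>\zeta,\ \tau_a^-<\zeta<\tau_b^+\}$: on this event the excursion goes negative, down-crosses $a$ (so $\zeta=\tau_a^-$... no, $\zeta$ is the full length), but stays below $b$; the relevant mass is $\mathbf{n}(e^{-q\tau_b^+};\ \tau_b^+<\zeta) - \mathbf{n}(e^{-q\tau_b^+};\ \tau_b^+<\zeta,\ \tau_a^-\ \text{not hit first})$, or more directly $\mathbf{n}(E_1)-\mathbf{n}(E_1\text{ without }a\text{-killing})$. I would compute it as the $x\downarrow0$ limit of $\tfrac1{W_q(x)}\big[\E_x(e^{-q(\tau_0^-\wedge\tau_b^+)})-\E_x(e^{-q(\tau_a^-\wedge\tau_b^+)})\big]$ type expressions, using \eqref{laplace_in_terms_of_z} with lower barrier $0$ versus $a$; the $e^{\Phi_q b}$ term comes from the $0$-barrier piece and the $W_q(b-a)/W_q(-a)$ term from the $a$-barrier piece, matching the sign. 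For (iii), $E_3$ adds the condition that the exponential-$r$ clock rings during the negative sojourn before $\zeta$, i.e.\ one replaces ``reach $a$'' by ``Poisson($r$) bail-out fires'', which by the memoryless property and \eqref{resolvent_density}/Lemma~\ref{lemma_u} converts $W_q(b-a)/W_q(-a)$ into $W_{q,r}^a(b)/W_{q+r}(-a)$; this is precisely the content already encoded in $U_2^0$ and Corollary~\ref{corollary_big_U_zero}, so I would cite that corollary rather than redo it. \textbf{The main obstacle} is (i): getting the normalization right so that the finite expression $e^{\Phi_q b}/W_q(b)$ emerges cleanly despite $\mathbf{n}(1-e^{-q\zeta})=\infty$ in the unbounded-variation case — the cutoff by $\tau_b^+$ must be handled carefully, and one should double-check the limit $\lim_{x\downarrow 0}\big(Z_q(x)-W_q(x)Z_q(b)/W_q(b)\big)/W_q(x)$ and its interaction with the known asymptotics $e^{-\Phi_q x}W_q(x)$ of \eqref{W_q_limit} to land on $e^{\Phi_q b}$ rather than $Z_q(b)$; a sanity check is that at $a\uparrow 0$ part (ii) should vanish and part (iii) reduce to $0$ as well, and $\mathbf{n}(E_1)$ alone should then reproduce the down-crossing structure, which pins the constant.
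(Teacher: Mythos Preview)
Your proposal has a genuine gap in part~(i), which you yourself flag as ``the main obstacle'' but do not resolve. The limit you compute,
\[
\lim_{x\downarrow 0}\frac{1-\E_x\big(e^{-q(\tau_0^-\wedge\tau_b^+)}\big)}{W_q(x)}
= \frac{Z_q(b)-1}{W_q(b)}
= \frac{q\,\overline W_q(b)}{W_q(b)},
\]
is \emph{not} equal to $e^{\Phi_q b}/W_q(b)$ (their ratio tends to $q/\Phi_q$ as $b\to\infty$, for instance), and the discrepancy is not a normalization issue. The heuristic $\mathbf n(E_1)=\lim_{x\downarrow 0}W_q(x)^{-1}\big(1-\E_x(e^{-q(\tau_0^-\wedge\tau_b^+)})\big)$ is simply wrong: under $\mathbb P_x$ the time $\tau_0^-\wedge\tau_b^+$ is \emph{not} the excursion length $\zeta$, because after $\tau_0^-$ the excursion continues its negative sojourn until it climbs back to $0$. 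Thus the event $\{\mathbf e_q<\zeta\}$ is not recovered from $\{\tau_0^-\wedge\tau_b^+<\mathbf e_q\}$, and the discounted probabilities you subtract are mismatched. The paper's argument for (i) is of a completely different nature: one recognizes $\mathbf L(\tau_b^+\wedge\mathbf e_q)$, the local time at $0$ accrued up to $\tau_b^+\wedge\mathbf e_q$, as the first entrance time of the excursion point process into $E_1$, hence exponential with parameter $\mathbf n(E_1)$. This gives $\mathbf n(E_1)=\big(\E[\mathbf L(\tau_b^+\wedge\mathbf e_q)]\big)^{-1}$, and the expected local time is then evaluated via the resolvent identity $\E\big(\int_0^{\tau_b^+}e^{-qs}\,\diff\mathbf L(s)\big)=e^{-\Phi_q b}W_q(b)$ from \cite{PPR1}.

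For (ii) and (iii) your outline is in the right spirit; the paper executes it by first using the memoryless property of $\mathbf e_q$ to rewrite each $\mathbf n(E_i)$ in the form $\mathbf n\big(e^{-q\tau_0^-}\,(w_p-v_q)(X(\tau_0^-));\,\tau_0^-<\tau_b^+\big)$ and then invoking Theorem~\ref{lemma_key_excursion} directly, rather than redoing raw $x\downarrow 0$ limits. For (iv) your idea is correct in outline, but the normalization needs care: the paper passes through the excursion measure $\underline{\mathbf n}$ of the process reflected at its infimum, normalizes by $W(x)$ (the scale function at $q=0$, which is what the construction of $\underline{\mathbf n}$ in \cite{CD} dictates), and then proves $\lim_{x\downarrow 0}W_q(x)/W(x)=1$ via the series representation $W_q=\sum_{j\ge 0}q^j W^{*(j+1)}$. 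Your direct division by $W_q(x)$ presupposes exactly this fact.
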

 \begin{proof}
(i)  Let $\mathbf{L} = (\mathbf{L}(t); t \geq 0)$ 
be the local time at zero of the process $X$ and observe that $\mathbf{L}(\tau^+_b\land \mathbf{e}_q)$ is the first time where the Poisson point process $(e_{t}, t\geq 0)$ enters the set $E_1$.
 In other words,  $\mathbf{L}(\tau^+_b\land \mathbf{e}_q)$ is the first time  an excursion that goes above the level $b$ or such that its length is bigger than $\mathbf{e}_q$ starts. Using Proposition 0.2 in  \cite{B}, we deduce that  $\mathbf{L}(\tau^+_b\land \mathbf{e}_q)$ is exponentially distributed with parameter
 \begin{equation}
 \mathbf{n}(E_1)
 =\e\Big( \mathbf{L}(\tau_{b}^+\wedge\mathbf{e}_q)\Big)^{-1}.\notag
 \end{equation}
By the identity  (2.25) of \cite{PPR1} we have, with $u_q$ defined in \eqref{u_q_resolvent},
 \begin{equation}\label{localtb}
 \e\Big( \mathbf{L}(\tau_{b}^+\wedge\mathbf{e}_q)\Big)=\mathbb{E}\left( \int_0^{\tau_b^+}e^{-qs}\ud \mathbf{L}(s)\right)=u_q(0)-\frac{u_q(b)u_q(-b)}{u_q(0)}=e^{-\Phi_{\q}b}W_{\q}(b).
 \end{equation}
 Hence, we have the result.

 (ii) By the memoryless property of $\mathbf{e}_q$, \begin{align*}
\mathbf{n}(E_2) &= \mathbf{n}\left(e^{-q\tau_0^-}\mathbb{P}_{X(\tau_0^-)}
\left(  \tau_a^- < \tau_0^+ < \tilde{\mathbf{e}}_q \right); \tau_0^-<\t_b^+\right),
\end{align*}
where $\tilde{\mathbf{e}}_q$ is an independent copy of $\mathbf{e}_q$.
In this case, if we define $w_q(x) := e^{\Phi_q x}- W_q(x-a) / W_q(-a)$, then $w_q(x) = \mathbb{P}_{x}
\left(\tilde{\mathbf{e}}_q > \tau_0^+ >  \tau_a^- \right)$ for all $x \leq 0$, $w_q \in\mathcal{V}_0^{(q)}$, $w_q(0)=0$, and is differentiable at $0$. Hence by Theorem \ref{lemma_key_excursion} for $p=q$ and $v_q = 0$, we obtain
\begin{align*}
\mathbf{n}(E_2)=-\frac{1}{W_q(b)}w_q(b)=-\frac{1}{W_q(b)}\left(e^{\Phi_q b}-\frac{W_q(b-a)}{W_q(-a)}\right).
\end{align*}

(iii) Again, by the memoryless property of $\mathbf{e}_q$,
\begin{align*}
\mathbf{n}\left(E_3\right)& =
\mathbf{n}\left(e^{-q\tau_0^-}\mathbb{P}_{X(\tau_0^-)}
\left( \tilde{\mathbf{e}}_q> \tau_0^+, \tau_a^- > \tau_0^+ > \mathbf{e}_r \right); \tau_0^-<\t_b^+\right)\\
&=\mathbf{n}\left(e^{-q\tau_0^-}\E_{X(\tau_0^-)}
\left(e^{-q\tau_0^+}-e^{-(\q+\p)\tau_0^+};\tau_0^+<\tau_a^-\right); \tau_0^-<\t_b^+\right).
\end{align*}
If we set
\begin{align*}
v_q(x):=\frac{W_q(x-a)}{W_q(-a)}\qquad\text{and}\qquad w_{q+r}(x):=\frac{W_{q+r}(x-a)}{W_{q+r}(-a)},
\end{align*}
then, for $x\leq0$, we have $v_q(x)=\E_{x}
\big( e^{-q\tau_0^+};\tau_0^+<\tau_a^-\big)$ and $w_{q+r}(x)=\E_{x}
\big(e^{-(\q+\p)\tau_0^+};\tau_0^+<\tau_a^-\big)$,
which satisfy the conditions of Lemma 2.1 (with $p=q+r$). Therefore, Theorem \ref{lemma_key_excursion}  shows the result.

(iv) By a small modification of the proof of Theorem 3 (ii) in \cite{PPR2}, it is not difficult to see that
\begin{align*}
\mathbf{n}\left(e^{-q\tau_b^+};  \tau_b^+ < \tau_0^- \right) = \mathbf{n}\left(e^{-q\tau_b^+};\tau_b^+<\zeta\right)=\underline{\mathbf{n}}\left(e^{-q\tau_b^+};\tau_b^+<\zeta\right),
\end{align*}
where $\underline{\mathbf{n}}$ is the excursion measure of the process reflected at its infimum. Now by Proposition 1 in \cite{CD}, which establishes that the measure $\underline{\mathbf{n}}$ can be constructed as a limit of the law of $X$ killed at its first passage time above $0$, and by  \eqref{laplace_in_terms_of_z}, 
\begin{align*}
\underline{\mathbf{n}}\left(e^{-q\tau_b^+};\tau_b^+<\zeta\right)=\lim_{x \downarrow 0}\frac{\mathbb{E}_x\left(e^{-q\tau_b^+};\tau_b^+<\tau_0^-\right)}{W(x)}=\lim_{x \downarrow 0}\frac{W_q(x)}{W(x)W_q(b)}=\frac{1}{W_q(b)}.
\end{align*}
To see how the last equality holds, by for instance the identity (3.6) in page 132 of \cite{KKR}, the scale function admits a series representation
$W_q(x)=\sum_{j\geq 0}q^{j}W^{*(j+1)}(x),$
where $W^{*(k)}$ denotes the $k$-th convolution of $W$ with itself.  In addition, because $W$ is increasing, $W^{*(k+1)}(x)\leq x W(x)W^{*(k)}(x)$, $x>0$. Hence, we deduce that
\begin{align*}
0 \leq \frac {W_q(x) - W(x)} {W(x)}= W(x)^{-1}\sum_{j\geq 1}q^{j}W^{*(j+1)}(x)\leq x  \sum_{j\geq 0}q^{j}W^{*(j+1)}(x) \leq x  W_q(x)\xrightarrow{x \downarrow 0} 0.
\end{align*}
In sum, $\mathbf{n} \big(e^{-q\tau_b^+};   \tau_b^+ < \tau_0^- \big) = W_q(b)^{-1}$.
 \end{proof}

 By Lemma \ref{lemma_E_123}, we have 
 $\mathbf{n}(E_1) + \mathbf{n}(E_2) + \mathbf{n}\left(E_3\right)
 = {W_{q,r}^a(b)} / [W_q(b) W_{q+r}(-a)]$.
Hence, by \eqref{laplace_T_E_B},\begin{align}\label{laplace_T_E_B_2}
\begin{split}
\e\Big( e^{-q l_{T_{E_B}}}\Big) &=\frac  {\mathbf{n}(E_B) W_q(b)  W_{q+r}(-a)} {W_{q,r}^a(b)}.
 \end{split}
\end{align}

\subsection{Proof of Theorem \ref{theorem_laplace} for the  unbounded variation case}

We show for the case $q > 0$.  The case $q = 0$ holds by monotone convergence.

\subsubsection{Proof of \eqref{Parisbailouts}}
On the event 
$\{l_{T_{E_B}} < \infty \}$,
before $T_{E_B}$, there is no contribution to the cumulative bail-outs (because no exponential clock has rung before the excursion ends) and hence $X_r(t) = X(t)$, $0 \leq t \leq l_{T_{E_B}}$.  
	Therefore, using the memoryless property of the Poisson arrival times,
  \begin{align}\label{eqwexc1_g}
 g(0,a,b,\theta)&= g_0(0,b) + [g_1(0,a,b,\theta)+ g_2(0,a,b,\theta)] g(0,a,b,\theta),
 \end{align}
where, with $\tilde{T}_0^- :=l_{T_{E_B}}+\tau_0^- \circ \Theta_{l_{T_{E_B}}}$, 
 \begin{align*}
 \begin{split}
    g_0(0, b) &:= \E\Big( e^{-q \tau_b^+}; \tilde{T}_0^- > \tau_b^+ \Big), \\
   g_1(0,a,b,\theta) &:= \E\Big( e^{-q (\tilde{T}_0^- +\mathbf{e}_{\p}) + \theta X(\tilde{T}_0^-+\mathbf{e}_{\p})}; \tilde{T}_0^- <\tau_b^+, (\tau_0^+ \circ \Theta_{\tilde{T}_0^-}) \wedge (\tau_a^- \circ \Theta_{\tilde{T}_0^-}) > \mathbf{e}_{\p} \Big), \\
   g_2(0,a,b,\theta) &:=  \E\Big( e^{-q (\tilde{T}_0^- + \tau_0^+ \circ \Theta_{\tilde{T}_0^-} ) 
   	}; \tilde{T}_0^- <\tau_b^+, \mathbf{e}_{\p} \wedge (\tau_a^- \circ \Theta_{\tilde{T}_0^-}) > \tau_0^+ \circ \Theta_{\tilde{T}_0^-} \Big).
   \end{split}
\end{align*}

In particular, $g_2(0,a,b,\theta) = 0$  because 
 $\mathbb{P} ( \tilde{T}_0^- <\tau_b^+, \mathbf{e}_{\p} \wedge (\tau_a^- \circ \Theta_{\tilde{T}_0^-}) > \tau_0^+ \circ \Theta_{\tilde{T}_0^-} ) = 0$ in view of the definition of $\tilde{T}_0^-$ and \eqref{E_B_laplace}.
\par A simple application of the so-called Master's formula  at time $l_{T_B}$ (see for instance excursions straddling a terminal time in Chapter XII in Revuz-Yor \cite{RY}) and Corollary \ref{corollary_big_U_zero} imply 
\begin{align}
g_0 (0,b) = \e\Big( e^{-q l_{T_{E_B}}}\Big) \mathbf{n}\big(e^{-q\tau_b^+}; \tau_0^- > \tau_b^+ \big) / \mathbf{n} (E_B)\quad\text{and}\quad g_1(0,a,b,\theta) = \e\Big( e^{-q l_{T_{E_B}}}\Big) U_1^0(a,b, \theta) / [W_q(b) \mathbf{n} (E_B)]. \label{g_0_1}
\end{align}

Substituting these, \eqref{laplace_T_E_B_2}, Lemma \ref{lemma_E_123}(iv), and \eqref{U_0_summary} in \eqref{eqwexc1_g}, we obtain
 \begin{align}\label{excu_descomp}
g(0,a,b,\theta) &=  \frac {W_q(b)\mathbf{n}\big(e^{-q\tau_b^+}; \tau_0^- > \tau_b^+ \big)}  {W_{q,r}^a(b)/W_{q+r}(-a) - U_1^0(a,b, \theta)} =  \mathcal{H}^{\q,\p}_a(b , \theta)^{-1}.
 \end{align}
Using this expression in (\ref{sdep_1}) (which also holds for the unbounded variation case), we get the result.
\subsubsection{Proof of \eqref{Parisbailouts_2}}
By modifying the above arguments, we have
 \begin{align} \label{h_a_expression_n}
h(0,a,b,\theta) &=  \frac {U_3^0(a,b)} {W_{q,r}^a(b)/W_{q+r}(-a) - U_1^0(a,b, \theta)}.
 \end{align}
This together with \eqref{U_0_summary} gives $h(0,a,b,\theta) = - \mathcal{I}_{q,r}^a(b)/ \mathcal{H}^a_{\q,\p}(b,\th)$. 
Using this in (\ref{Exit_prob_a_new1}), we obtain the result.
\subsection{Proof of Theorem \ref{expected_bailouts} for the  unbounded variation case}

We shall show for the case $q > 0$; the case $q=0$ holds by monotone convergence.
With $g_1$ defined in \eqref{g_0_1}, using the memoryless property of the Poisson arrival times,
\begin{align}\label{eqwexc1_ga}
f(0,a,b)&= f_0(0,a,b) + g_1(0,a,b,0) f(0,a,b),
\end{align}
where
\begin{align*}
f_0(0,a, b) &:= \E\Big( e^{-q (\tilde{T}_0^- + \mathbf{e}_{\p})}(-X(\tilde{T}_0^- + \mathbf{e}_{\p})); \tilde{T}_0^- <\tau_b^+, (\tau_0^+ \circ \Theta_{\tilde{T}_0^-})\wedge(\tau_a^- \circ \Theta_{\tilde{T}_0^-}) > \mathbf{e}_{\p}\Big).
\end{align*}
Again, by the Master's formula in excursion theory and Corollary \ref{corollary_big_U_zero},
\begin{align*}
f_0 (0, a,b) = \e\Big( e^{-q l_{T_{E_B}}}\Big) \widehat{f}_0(0,a,b) / \mathbf{n} (E_B),
\end{align*}
where
\begin{align} \label{g_partsa}
\widehat{f}_0(0,a,b)&:=\mathbf{n}\left(e^{-q (\t_0^- + \mathbf{e}_{\p})}(-X(\t_0^- + \mathbf{e}_{\p})); \t_0^- <\tau_b^+, (\tau_0^+ \circ \Theta_{\t^-_0})\wedge(\tau_a^- \circ \Theta_{\t^-_0}) > \mathbf{e}_{\p}\right)\notag\\
&= -U_4^0(a,b){/W_q(b)}.
\end{align}
Using this, \eqref{laplace_T_E_B_2}, and \eqref{g_0_1}, we obtain  
\begin{align}\label{mainexclow}
f(0,a,b) &=  \frac {-U_4^0(a,b)} {W_{q,r}^a(b)/W_{q+r}(-a) - U_1^0(a,b, 0)}. 
\end{align}
Using the above identity, \eqref{U_0_summary}, and \eqref{U_0_summary_2}, we get $f(0,a,b)=  {h_{\q,\p}^{a}(b)} / {\mathcal{H}_{\q,\p}^{a}(b,0)}$.
Substituting this in (\ref{MPa}), we obtain the result.

\section{Proofs for the case with classical reflection above} \label{section_proof_reflected}

In this section, we prove the theorems in Section \ref{subsection_reflected_case}.  Toward this end, we first show Theorem \ref{lemma_simplifying_reflected}, which is a version of Lemma 2.1 of \cite{LRZ} (see \eqref{LRZ_identity}), when the process $X$ is replaced with the reflected process $Y^b$. Using this and the results for the process $X_r$ as obtained in previous sections, the theorems can be proven by arguments via the strong Markov property. These proofs hold for both the bounded and unbounded variation cases.

\subsection{Simplifying formula for the spectrally negative \lev process reflected from above}





	\begin{theorem} \label{lemma_simplifying_reflected} Fix $p \geq 0$ and $b > 0$. Suppose $v_p: \R \to [0, \infty)$ and belongs to $\tilde{\mathcal{V}}_0^{(p)}$.  Assume also that $v_p$ is locally bounded, right-hand differentiable at $b$ and 
	\begin{align} \label{integrability_cond_formula_reflected}
	\sup_{0 \leq y \leq b}\int_{(-\infty, -1]} v_p (y + \theta) \Pi (\diff \theta) < \infty.
	\end{align}
	 In addition, for the case of unbounded variation, in (ii) for the definition of $\tilde{\mathcal{V}}_0^{(p)}$ above, $v_{p,n}'(b+)$ converges to $v_{p}'(b+)$.
	Then, for $x \leq b$ and $q \geq 0$, 
	\begin{align}\label{LRZ_identity_reflected}
	\E_x \big[ e^{-q \eta_0^-} v_p(Y^b(\eta_0^-)) \big]  	&= - \frac {W_q(x)} {W_q'(b +)} \frac {\partial_+} {\partial_+ b} \Big( v_p(b) - (p-q) \int_0^b W_q(b-y) v_p(y) \diff y \Big) \notag\\
	&+ v_p(x) - (p-q) \int_0^x W_q(x-y) v_p(y) \diff y\notag\\
	&= - \frac {W_q(x)} {W_q'(b +)}\Big( v_p'(b +) - (p-q) \Big[ \int_0^b W_q'(b-y) v_p(y) \diff y + W_q(0) v_p(b)\Big] \Big) \notag\\
	&+ v_p(x) - (p-q) \int_0^x W_q(x-y) v_p(y) \diff y.
	\end{align}	
	\end{theorem}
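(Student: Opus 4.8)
The target identity \eqref{LRZ_identity_reflected} is the reflected-process analogue of the Li--Renaud--Zhou formula \eqref{LRZ_identity}, so the natural strategy is to reduce the reflected problem to the non-reflected one by decomposing the path of $Y^b$ at the first time it is pushed down from $b$. Concretely, I would start the process $Y^b$ from $x\le b$ and run it until $\eta_0^-$; two scenarios occur. Either the process hits $0$ (from below) before the reflecting barrier $b$ is active in a way that matters, in which case the excursion looks exactly like $X$ killed on $\{\tau_0^-<\tau_b^+\}$, or the process reaches $b$ first, gets (instantaneously) reflected, and by the strong Markov property and spatial homogeneity the problem restarts from $b$ but now as a reflected process. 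Writing $\phi(x):=\E_x[e^{-q\eta_0^-}v_p(Y^b(\eta_0^-))]$, this gives the renewal-type equation
\begin{align*}
\phi(x) &= \E_x\big(e^{-q\tau_0^-}v_p(X(\tau_0^-));\tau_0^-<\tau_b^+\big) + \E_x\big(e^{-q\tau_b^+};\tau_b^+<\tau_0^-\big)\,\phi(b),
\end{align*}
valid for $x\le b$, where for $x>b$ one has trivially $\phi(x)=\phi(b)$ (the process is pushed down to $b$ instantly). The first term on the right is evaluated by \eqref{LRZ_identity} applied to $v_p\in\tilde{\mathcal V}_0^{(p)}$, and the second by the first identity in \eqref{laplace_in_terms_of_z}, namely $W_q(x)/W_q(b)$.

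**Solving the equation.** Setting $x=b$ in the displayed equation and solving for $\phi(b)$ is the crux: one gets
\begin{align*}
\phi(b) = \frac{\E_b\big(e^{-q\tau_0^-}v_p(X(\tau_0^-));\tau_0^-<\tau_b^+\big)}{1 - W_q(b)/W_q(b)},
\end{align*}
which is the indeterminate $0/0$ — the whole point is that as $x\uparrow b$ the "first-passage-to-$b$ before $0$" probability tends to $1$ and the "down-crossing before up-crossing" term tends to $0$. So instead of setting $x=b$ I would subtract: from the renewal equation, $\phi(b)-\phi(x) = [\text{first term at }b] - [\text{first term at }x] + (W_q(x)/W_q(b)-1)\phi(b)$, hence
\begin{align*}
\phi(b)\Big(1 - \tfrac{W_q(x)}{W_q(b)}\Big) = \Psi(b)-\Psi(x),
\end{align*}
where $\Psi(x):=v_p(x)-(p-q)\int_0^xW_q(x-y)v_p(y)\diff y$ is (by \eqref{LRZ_identity}) the "diagonal" part of the first term, minus $\frac{W_q(x)}{W_q(b)}$ times the "at-$b$" part; more carefully, writing $A(b):=v_p(b)-(p-q)\int_0^bW_q(b-y)v_p(y)\diff y$, the first term equals $\Psi(x)-\frac{W_q(x)}{W_q(b)}A(b)$, and plugging in one finds $\phi(b)(1-W_q(x)/W_q(b)) = \Psi(b)-\Psi(x) - \frac{W_q(b)-W_q(x)}{W_q(b)}A(b)$. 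Dividing by $(W_q(b)-W_q(x))/W_q(b)$ and letting $x\uparrow b$, the right side becomes a difference quotient for $\Psi$ and $A$ against $W_q$; since $\Psi(b)=A(b)$ in fact one is left with
\begin{align*}
\phi(b) = \lim_{x\uparrow b}\frac{\Psi(b)-\Psi(x)}{W_q(b)-W_q(x)}\cdot\frac{W_q(b)}{W_q(b)} - \cdots = -\frac{A'(b+)}{W_q'(b+)},
\end{align*}
the sign coming from $\Psi'=A'$ (again by differentiating the relation, or directly since $\Psi-A = -(W_q/W_q(b))A(b)$ only matters through derivatives). Substituting $\phi(b)$ back into the renewal equation yields exactly the first displayed form of \eqref{LRZ_identity_reflected}; the second form is then immediate by carrying $\partial_+/\partial_+ b$ through the integral (using that $W_q$ is right-differentiable on $(0,\infty)$ and $W_q(0+)$ is the boundary term from differentiating the convolution $\int_0^b W_q(b-y)v_p(y)\diff y$), together with \eqref{eq:Wqp0}.

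**Main obstacle and technical care.** The genuinely delicate point is justifying the limit $x\uparrow b$ and the existence of $A'(b+)$, i.e.\ that one may differentiate $b\mapsto\int_0^b W_q(b-y)v_p(y)\diff y$ from the right. This is where the hypotheses enter: local boundedness of $v_p$ and the integrability condition \eqref{integrability_cond_formula_reflected} control $\int_0^b W_q'(b-y)v_p(y)\diff y$ (the jump measure showing up because $v_p$ is only assumed measurable, not continuous — one needs the $\Pi$-integral bound to ensure $b\mapsto\E_b(e^{-q\tau_0^-}v_p(X(\tau_0^-));\cdots)$ is actually right-differentiable), and right-differentiability of $v_p$ at $b$ handles the leading term. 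For the \emph{unbounded variation} case the renewal equation and the evaluation of its two terms still hold verbatim (no approximation needed for the strong Markov argument itself), but to know $v_p\in\tilde{\mathcal V}_0^{(p)}$ is usable and that the derivative passes to the limit one invokes the approximating sequence $v_{p,n}$ with the added assumption $v_{p,n}'(b+)\to v_p'(b+)$; I would prove the identity for each bounded-variation $X^n$ with $v_{p,n}$ and then pass to the limit using uniform-on-compacts convergence of $v_{p,n}\to v_p$, $W_q^{(n)}\to W_q$, and the derivative convergence at $b$. A secondary check is the boundary behaviour at $0$: when $X$ is of unbounded variation $W_q(0)=0$ so the $W_q(0)v_p(b)$ term in the second form of \eqref{LRZ_identity_reflected} drops, consistent with the $\eta_0^-$ down-crossing having no creeping component there; when $X$ is of bounded variation $W_q(0)=1/c\ne 0$ and that term is genuinely present — I would remark on this to make the formula's two faces consistent with \eqref{eq:Wqp0}.
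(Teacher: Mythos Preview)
Your renewal decomposition is correct as far as it goes: for $x\le b$,
\[
\phi(x)=\E_x\big(e^{-q\tau_0^-}v_p(X(\tau_0^-));\tau_0^-<\tau_b^+\big)+\frac{W_q(x)}{W_q(b)}\,\phi(b),
\]
and plugging \eqref{LRZ_identity} into the first term gives $\phi(x)=\Psi(x)+\tfrac{W_q(x)}{W_q(b)}\big[\phi(b)-\Psi(b)\big]$ with $\Psi(y):=v_p(y)-(p-q)\int_0^y W_q(y-z)v_p(z)\,\diff z$. But this relation does \emph{not} determine $\phi(b)$: for each $x$ it is one linear equation in the two unknowns $\phi(x),\phi(b)$, and at $x=b$ it collapses to a tautology. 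Your difference-quotient step does not escape this. The displayed equation $\phi(b)\big(1-W_q(x)/W_q(b)\big)=\Psi(b)-\Psi(x)-\tfrac{W_q(b)-W_q(x)}{W_q(b)}A(b)$ is obtained only by dropping $\phi(x)$ from the relation (redo the subtraction: the correct left side is $\tfrac{W_q(x)}{W_q(b)}\phi(b)-\phi(x)$, not $(1-W_q(x)/W_q(b))\phi(b)$). With $\phi(x)$ restored, dividing by $W_q(b)-W_q(x)$ and sending $x\uparrow b$ yields only $\phi'(b-)=\Psi'(b-)+\tfrac{W_q'(b-)}{W_q(b)}[\phi(b)-\Psi(b)]$, which still leaves $\phi(b)$ free. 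Consistently, your asserted value $\phi(b)=-A'(b+)/W_q'(b+)$ is wrong: the correct value is $\phi(b)=\Psi(b)-\tfrac{W_q(b)}{W_q'(b+)}\Psi'(b+)$.

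The missing ingredient is genuine information about the reflecting dynamics at $b$, which a strong-Markov decomposition at $\tau_b^+$ can never supply (at that instant $Y^b$ is simply at $b$ and you restart the same unknown $\phi(b)$). The paper obtains this information by a different route: in the bounded-variation case it writes $\phi(x)$ directly via the compensation formula and the known resolvent density of $Y^b$ killed at $\eta_0^-$ (Pistorius), which brings in the crucial ratio $W_q'(b-y)/W_q'(b+)$, and then simplifies the resulting $\Pi$-integral by taking the right $b$-derivative of the LRZ identity (their (19)). That differentiation step is where the right-differentiability of $v_p$ at $b$ and the integrability hypothesis \eqref{integrability_cond_formula_reflected} are actually used (to justify exchanging $\partial_+/\partial_+ b$ with the integral). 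The unbounded-variation case is then handled by the approximation scheme you describe, with the extra convergence $v_{p,n}'(b+)\to v_p'(b+)$ needed precisely because the derivative at $b$ appears explicitly in the formula.
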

\begin{proof}	(i) We first consider the case of bounded variation.   We also focus on the case $0 \leq x \leq b$; the case $x < 0$ is immediate.

	Using the resolvent given in Theorem 1 of \cite{pistorius2007excursion} and the compensation formula, we have
	\begin{align} \label{resolvent_expression_reflected}
	\begin{split}
	\E_x \big[ e^{-q \eta_0^-} v_p(Y^b(\eta_0^-))\big] &= \int_0^\infty \int_{(-\infty, -y)} v_p(y + \theta) \Pi (\diff \theta)  \Big[ \frac {W_q'(b-y)} {W_q'(b +)} W_q(x) - W_q(x-y)\Big] \diff y \\
	&+  W_q(x) \frac {W_q(0)} {W_q'(b+)} \int_{(-\infty, -b )} v_p(b + \theta ) \Pi (\diff \theta).
	\end{split}
	\end{align}
	By (19) of \cite{LRZ}, we have
	\begin{align}
\int_0^{b} W_q(b-y) \int_{(-\infty, -y)} v_p(y + \theta) \Pi (\diff \theta)  \diff y = c v_p (0) W_q(b) - v_p(b) + (p-q) \int_0^b W_q(b-y) v_p(y) \diff y. \label{LRZ_simplifying}
	\end{align}
Taking the right-hand derivative with respect to $b$,
		\begin{align} \label{LRZ_simplifying_reflected}
		\begin{split}
&\int_0^{b} W_q'(b-y)  \int_{(-\infty, -y)} v_p(y + \theta) \Pi (\diff \theta)  \diff y + W_q(0)  \int_{(-\infty, -b )} v_p(b + \theta ) \Pi (\diff \theta) \\
&=\frac {\partial_+} {\partial_+ b}\int_0^\infty W_q(b-y) \int_{(-\infty, -y)} v_p(y + \theta) \Pi (\diff \theta)  \diff y
 \\&= c v_p (0) W_q'(b+) - v_p'(b +) + (p-q) \Big[ \int_0^b W_q'(b-y) v_p(y) \diff y + W_q(0) v_p(b) \Big].
 \end{split}
	\end{align}

Here the right-hand derivative on the left-hand side can be interchanged over integrations by the following arguments.
For $\epsilon > 0$ and $0 \leq \delta < b$,  define
\begin{align*}
		K_1(\delta, \epsilon)
		&:= \int_0^{b-\delta} \frac {W_q(b+\epsilon-y)  - W_q(b-y)} \epsilon \int_{(-\infty, -y)} v_p(y + \theta) \Pi (\diff \theta)  \diff y, \\
		K_2(\delta, \epsilon)
		&:= \int_{b-\delta}^b \frac {W_q(b+\epsilon-y)  - W_q(b-y)} \epsilon \int_{(-\infty, -y)} v_p(y + \theta) \Pi (\diff \theta)  \diff y.
	\end{align*}
Note that for all $0 < \delta < b$, we have 
\begin{align} \label{K_sum}
K_1(0, \epsilon) = K_1(\delta, \epsilon) + K_2(\delta, \epsilon).
\end{align}  
Here, we show how $\lim_{\epsilon \downarrow 0}K_1(0, \epsilon)$ can be computed.
For the first term, for any $0 < \epsilon < \bar{\epsilon}$ for fixed $\bar{\epsilon} > 0$ and $0 < y < b - \delta$, we have a bound: $| W_q (b+\epsilon-y) - W_q(b-y) | / \epsilon \leq \sup_{\delta < z < b + \bar{\epsilon}} W_q' (z) < \infty$ (because $W_q'(z)$ is finite if $z > 0$, which is clear from \eqref{eq:Wqp0} [see  also identity (8.26) in \cite{K}]), and 
%
	\begin{align*}
	\int_0^{b-\delta} \sup_{\delta < z < b + \bar{\epsilon}} W_q' (z) \int_{(-\infty, -y)} v_p(y + \theta) \Pi (\diff \theta)  \diff y = \sup_{\delta < z < b + \bar{\epsilon}} W_q' (z) \int_0^{b-\delta}  \int_{(-\infty, -y)} v_p(y + \theta) \Pi (\diff \theta)  \diff y
	\end{align*}
is finite by \eqref{integrability_cond_formula_reflected} and because, for sufficiently small $c > 0$, by the assumption that $X$ is of bounded variation,
	\begin{align*}	
\int_{0}^{c}\int_{(-\infty, -y)} 1_{\{u > -c \}}\Pi(\diff u)\diff y = \int_{0}^c \int_{(-c, -y)} \Pi(\diff u)\diff y = \int_{(-c,0)} |u| \Pi(\diff u) < \infty.
\end{align*}

Therefore, by dominated convergence, the limit as $\epsilon \downarrow 0$ can be interchanged over the integral and hence,
%
		\begin{align*}
		K:= \lim_{\delta \downarrow 0}\lim_{\epsilon \downarrow 0} K_1(\delta, \epsilon) &=\lim_{\delta \downarrow 0}\int_0^{b-\delta} W_q'(b-y)   \int_{(-\infty, -y)} v_p(y + \theta) \Pi (\diff \theta)  \diff y \\
		&=\int_0^{b} W_q'(b-y)   \int_{(-\infty, -y)} v_p(y + \theta) \Pi (\diff \theta)  \diff y.
	\end{align*}
On the other hand, by Fubini's theorem and because $k(\delta) := \sup_{0 \leq u \leq b}\int_{(-\infty, -b + \delta)} v_p(u + \theta)  \Pi(\diff \theta) < \infty$, $0 < \delta < b$, by \eqref{integrability_cond_formula_reflected},
%
\begin{multline*}
	0 \leq K_2(\delta, \epsilon)
		\leq  k(\delta) \int_{b-\delta}^b \frac {W_q(b+\epsilon-y)  - W_q(b-y)} \epsilon \diff y = k(\delta)  \frac 1 \epsilon \int_{b-\delta}^b  \int_0^\epsilon W_q'(b+z-y)  \diff z  \diff y \\
= k(\delta)\frac 1 \epsilon   \int_0^\epsilon (W_q(\delta+z) - W_q(z))  \diff z  \leq k(\delta) \sup_{0 \leq z \leq \epsilon}|W_q(\delta+z) - W_q(z)|  \diff z.   
		\end{multline*}
	Hence, noting that $\lim_{\delta \downarrow 0} \lim_{\epsilon \downarrow 0}K_2(\delta,\epsilon)=0$ and by \eqref{K_sum},
	\begin{multline*}
K  = \lim_{\delta \downarrow 0}\liminf_{\epsilon \downarrow 0} (K_1(\delta, \epsilon) + K_2 (\delta, \epsilon)) = \liminf_{\epsilon\downarrow 0} K_1(0, \epsilon)  \leq  \limsup_{\epsilon\downarrow 0} K_1(0, \epsilon) \\ = 
	\lim_{\delta \downarrow 0}\limsup_{\epsilon\downarrow 0} (K_1(\delta, \epsilon) + K_2 (\delta, \epsilon)) = K,
	\end{multline*}
implying $K_1(0, \epsilon) \xrightarrow{\epsilon \downarrow 0} K$, as desired.

Now, substituting \eqref{LRZ_simplifying} and \eqref{LRZ_simplifying_reflected} in \eqref{resolvent_expression_reflected}, we have 
	\begin{align*}
	\E_x \big[ e^{-q \eta_0^-} v_p(Y^b(\eta_0^-)) \big] &=  \frac {W_q(x)} {W_q'(b+)}\Big( c v_p (0) W_q'(b+) - v_p'(b +) + (p-q) \Big[ \int_0^b W_q'(b-y) v_p(y) \diff y + W_q(0) v_p(b) \Big] \Big) \\
	&- \Big(c v_p (0) W_q(x) - v_p(x) + (p-q) \int_0^x W_q(x-y) v_p(y) \diff y \Big),
	\end{align*}
as desired.

(ii) 
The unbounded variation case can be similarly shown by approximation methods by \cite{LRZ}.  With the convergent sequence $X^n$ for $X$, the corresponding reflected processes $Y^{b,n}$ also converge uniformly in compacts to $Y^b$ (using the fact that $Y^b$ can be written as the difference between $X$ and its running supremum). Hence, it suffices to take the limit in the equality \eqref{LRZ_identity_reflected} for $Y^{b,n}$: 
	\begin{align*}
	\E_x \big[ e^{-q \eta_{0,n}^-} v_{p,n}(Y^{b,n}(\eta_{0,n}^-)) \big]  
	&= - \frac {W_{q,n}(x)} {W_{q,n}'(b +)}\Big( v_{p,n}'(b +) - (p-q) \Big[ \int_0^b W_{q,n}'(b-y) v_{p,n}(y) \diff y + W_{q,n}(0) v_{p,n}(b)\Big] \Big) \notag\\
	&+ v_{p,n}(x) - (p-q) \int_0^x W_{q,n}(x-y) v_{p,n}(y) \diff y,
	\end{align*}	
where $\eta_{0,n}^-$ and $W_{q,n}$ correspond to those for $Y^{b,n}$. Similarly to the arguments in the proof of Lemma 2.1 of \cite{LRZ}, the left hand side converges to $\E_x \big[ e^{-q \eta_{0}^-} v_{p}(Y^{b}(\eta_0^-))\big]$ 
 thanks to the uniform convergence assumed in (ii) in the definition of $\tilde{\mathcal{V}}^{(p)}$.  Regarding the convergence on the right hand side, it is known that $W_{q,n}(x) \rightarrow W_{q}(x)$ 
 for $x \in \R$, and $W_{q,n}'(x) \rightarrow W_{q}'(x)$ 
 for $x > 0$, as in Remark 3.2 of \cite{PY20015b}.  In addition, triangle equality gives
 \begin{align*}
& \Big| \int_0^b [W_{q,n}'(b-y) v_{p,n}(y) - W_{q}'(b-y) v_{p}(y)] \diff y  \Big|\\
 &\leq  \int_0^b W_{q,n}'(b-y) |v_{p,n}(y) - v_{p}(y)| \diff y 
 + \Big| \int_0^b (W_{q,n}'(b-y) - W_{q}'(b-y)) v_{p}(y) \diff y \Big| \\
 &\leq  (W_{q,n}(b) - W_{q,n} (0)) \sup_{0 \leq y \leq b} |v_{p,n}(y) - v_{p}(y)|
 + \big( |W_{q,n}(b) - W_{q}(b) | +  |W_{q,n}(0) - W_{q}(0) | \big) \sup_{0 \leq y \leq b}v_{p}(y),
 \end{align*}
 which vanishes as $n \uparrow \infty$ by the assumed uniform convergence of $v_{p,n}$.
	\end{proof}
	

By Theorem \ref{lemma_simplifying_reflected} together with Lemma \ref{lemma_u_in_V}, we can compute, for $a < 0 < b$ and $x \leq b$,
\begin{align*}
\begin{split}
\widehat{U}_1(x, a,b,\theta) &:= \E_x\Big(e^{-q\eta_0^-} u_1 (Y^b(\eta_0^-), a, \theta); \eta_0^- < \infty \Big), \quad \theta \geq 0, \\
\widehat{U}_i(x, a,b) &:= \E_x\Big(e^{-q\eta_0^-} u_i (Y^b(\eta_0^-), a ); \eta_0^- < \infty \Big), \quad i = 2,3, 4.
\end{split}
\end{align*}
For $a < 0$ and $x \in \R$, we define the derivatives of \eqref{U_0_summary} with respect to $x$:
\begin{align} \label{U_0_hat}
\begin{split}
\widehat{U}_1^{0}(a,x, \theta)&:=-\left(\p\int_{0}^{-a} e^{-\theta u}\Big[ \frac {(W^{a }_{\q,\p})'(x) W_{\q+\p} (u)} {W_{\q+\p}(-a)} -(W^{-u }_{\q,\p})' (x) \Big]\diff u\right), \quad \theta \geq 0, \\
\widehat{U}_2^{0}(a,x) &:= - (W^{a }_{\q,\p})' (x) / W_{q+r}(-a), \\
\widehat{U}_3^{0}(a,x) &:= - \mathcal{I}_{q,r}^{a \prime} (x) , \\
\widehat{U}_4^{0}(a,x) &:= - h^{a \prime}_{q,r}(x).
\end{split}
\end{align}
In particular,
\begin{align*}
\widehat{U}_1^{0}(a, x, 0) &=- r \frac { \overline{W}_{q+r} (-a)} {W_{q+r}(-a)} (W^{a}_{\q,\p})'(x)+ \frac r {q+r} \Big[ (q+r)W^{a}_{\q,\p}(x)-rW_q(x)Z_{q+r}(-a) - qW_q(x)  \Big].
\end{align*}

Similarly to Corollary \ref{corollary_big_U}, we have the following result.
\begin{lemma} \label{A3}
For $a < 0 < b$ and $x \leq b$,
\begin{align*}
\widehat{U}_1(x, a,b,\theta) &=   \frac{W_q(x)}{W_q'(b+)}\widehat{U}_1^0(a,b,\theta) - U_1^0(a,x,\theta), \quad \theta \geq 0, \\
\widehat{U}_i(x, a,b) &=  \frac{W_q(x)}{W_q'(b+)}\widehat{U}_i^0(a,b) - U_i^0(a,x), \quad i = 2,3, 4.
\end{align*}
\end{lemma}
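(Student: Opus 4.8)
The plan is to run the proof of Corollary \ref{corollary_big_U} with the reflected simplifying formula of Theorem \ref{lemma_simplifying_reflected} in place of \eqref{LRZ_identity}. First I would observe that $Y^b(\eta_0^-)\leq 0$ (by right-continuity and the definition of $\eta_0^-$), so by Lemma \ref{lemma_u} the integrands coincide on the relevant set: $u_i(Y^b(\eta_0^-),a,\theta)=\tilde u_i(Y^b(\eta_0^-),a,\theta)$ (and likewise for the versions without $\theta$). Hence $\widehat U_i(x,a,b,\theta)=\E_x\big(e^{-q\eta_0^-}\tilde u_i(Y^b(\eta_0^-),a,\theta);\eta_0^-<\infty\big)$, with $\tilde u_i$ now viewed as a function on all of $\R$.

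For $i=1,2,3$ I would then apply Theorem \ref{lemma_simplifying_reflected} to $v_p:=\tilde u_i(\cdot,a,\theta)$ (resp.\ $\tilde u_i(\cdot,a)$) with $p=q+r$, so that $p-q=r$. Its hypotheses are met: $\tilde u_i\in\tilde{\mathcal{V}}_0^{(q+r)}$ is Lemma \ref{lemma_u_in_V}; local boundedness and right-hand differentiability at $b$ hold because $b-a>0$ and $b+u>0$ for $u\in(0,-a)$, so every scale function occurring is evaluated away from the origin; the integrability requirement \eqref{integrability_cond_formula_reflected} is clear since $\tilde u_i$ is bounded on the compact $[a,b-1]$, vanishes (for $i=1,2$) or equals $1$ (for $i=3$) below $a$, and $\Pi((-\infty,-1])<\infty$; and for the unbounded variation case the needed convergence $v_{p,n}'(b+)\to v_p'(b+)$ of the approximating derivatives follows from $W_{q+r,n}'(x)\to W_{q+r}'(x)$, $x>0$, plus dominated convergence in the $u$-integral, exactly as in the approximation scheme of \cite{LRZ}. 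Then \eqref{LRZ_identity_reflected} yields
\[
\widehat U_i(x,a,b) = -\frac{W_q(x)}{W_q'(b+)}\frac{\partial_+}{\partial_+ b}\Big(\tilde u_i(b) - r\int_0^b W_q(b-y)\tilde u_i(y)\,\diff y\Big) + \tilde u_i(x) - r\int_0^x W_q(x-y)\tilde u_i(y)\,\diff y.
\]

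It then remains to recognize the right-hand side. Comparing \eqref{LRZ_identity} applied to $\tilde u_i$ with the definition \eqref{U_0_summary} — as implicitly in Corollary \ref{corollary_big_U} — shows $U_i^0(a,x)=-\tilde u_i(x)+r\int_0^x W_q(x-y)\tilde u_i(y)\,\diff y$, while $\widehat U_i^0(a,x)=\partial_x U_i^0(a,x)$ by the very definition \eqref{U_0_hat}. Substituting these into the display turns it into $\widehat U_i(x,a,b)=\frac{W_q(x)}{W_q'(b+)}\widehat U_i^0(a,b)-U_i^0(a,x)$, which is the claim for $i=1,2,3$ (and for all $\theta\geq 0$ in the case $i=1$).

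Finally, for $i=4$ I would argue as in the proof of Corollary \ref{corollary_big_U}: since $u_4(\cdot,a)=\partial_\theta u_1(\cdot,a,\theta)\big|_{\theta=0}$ with $|\partial_\theta u_1(z,a,\theta)|\leq|a|$ uniformly, dominated convergence permits differentiating under $\E_x$ and then passing $\theta\downarrow 0$, giving $\widehat U_4(x,a,b)=\lim_{\theta\downarrow 0}\partial_\theta\widehat U_1(x,a,b,\theta)$. Applying $\lim_{\theta\downarrow 0}\partial_\theta$ to the identity just proved for $i=1$ and invoking $\lim_{\theta\downarrow 0}\partial_\theta U_1^0(a,x,\theta)=-h^a_{q,r}(x)=U_4^0(a,x)$ from \eqref{new_iden_u4} (hence also $\lim_{\theta\downarrow 0}\partial_\theta\widehat U_1^0(a,b,\theta)=\partial_b(-h^a_{q,r}(b))=-h^{a\prime}_{q,r}(b)=\widehat U_4^0(a,b)$ by \eqref{U_0_hat}) closes the case $i=4$. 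I expect the only real obstacle to be the technical verification of the hypotheses of Theorem \ref{lemma_simplifying_reflected} for $\tilde u_i$ in the unbounded variation regime — uniform-on-compacts convergence of the approximants $v_{p,n}$ and of their right-hand derivatives at $b$ — together with the sign bookkeeping in the identification step; the probabilistic content is otherwise entirely carried by Theorem \ref{lemma_simplifying_reflected}.
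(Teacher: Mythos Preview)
Your proposal is correct and follows essentially the same route as the paper's proof: for $i=1,2,3$ apply Theorem \ref{lemma_simplifying_reflected} with $v_p=\tilde u_i\in\tilde{\mathcal V}_0^{(q+r)}$ (via Lemma \ref{lemma_u_in_V}) and identify the resulting expression with $U_i^0$ and its $b$-derivative $\widehat U_i^0$, and for $i=4$ differentiate the $i=1$ identity in $\theta$ at $0$ exactly as in Corollary \ref{corollary_big_U}. Your write-up is simply more explicit about the hypothesis checks for Theorem \ref{lemma_simplifying_reflected} and about the identification $U_i^0(a,x)=-\tilde u_i(x)+r\int_0^x W_q(x-y)\tilde u_i(y)\,\diff y$, which the paper compresses into ``a direct consequence.''
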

\begin{proof}
For the case $i=1,2,3$, this is a direct consequence of Lemma \ref{lemma_u_in_V} and Theorem \ref{lemma_simplifying_reflected}.

On the other hand, for the remaining case, we note that
\begin{align*}
\widehat{U}_4(x, a,b) =  \frac{W_q(x)}{W_q'(b+)} \lim_{\theta \downarrow 0} \frac \partial {\partial \theta}\widehat{U}_1^0(a,b,\theta) - \lim_{\theta \downarrow 0} \frac \partial {\partial \theta} U_1^0(a,x,\theta).
\end{align*}
Similarly to the computation for $\lim_{\theta \downarrow 0} (\partial U_1^0(a,x,\theta)/  {\partial \theta})$ as in the proof of Corollary \ref{corollary_big_U}, we have \\ $\lim_{\theta \downarrow 0} (\partial \widehat{U}_1^0(a,b,\theta) / {\partial \theta})  =- h^{a \prime}_{q,r}(x)$. Hence putting the pieces together we obtain the result.
\end{proof}

Note that by Lemma \ref{A3}, \eqref{U_0_summary},  \eqref{U_1_U_2_sum}, \eqref{U_0_hat}, and $\widehat{U}_1^0(a,x, \theta) + \widehat{U}_2^0(a,x) = -\mathcal{H}_{q,r}^{a \prime}(x, \theta)$ for $x \in \R$ and $\theta \geq 0$,
		\begin{align} \label{H_H_derivative_relation}
		\begin{split}
		 \mathcal{H}^a_{q,r}(b,\theta) - \widehat{U}_1(b,a,b,\theta) - \widehat{U}_2(b,a,b)  &= \frac {W_q(b)} {W_q'(b+)} \mathcal{H}^{a \prime}_{q,r}(b,\theta), \quad \theta \geq 0, \\
		 \mathcal{I}^a_{q,r}(b) - \widehat{U}_3(b,a,b)  &= \frac {W_q(b)} {W_q'(b+)} \mathcal{I}^{a \prime}_{q,r}(b), \\
		  h_{\q,\p}^{a}(b)  - \widehat{U}_4(b,a,b) &=\frac {W_q(b)} {W_q'(b+)}  h_{\q,\p}^{a \prime}(b).
			\end{split}
		\end{align}


\subsection{Proof of Theorem \ref{theorem_h_hat}}

Note that for $x \leq b$, $\mathbb{P}_x$-a.s.
\begin{align} \label{Y_X_matches}
Y^b_r(t) = X_r(t), \quad 0 \leq t \leq \tau_b^+(r).
\end{align}
By an application of the Markov property at $\tau_b^+(r)$ on $\{ \tau_b^+(r) < \tau_a^-(r)\}$, we have
\begin{align}
\widehat{h}(x,a,b,\th)=h(x,a,b,\th)+g(x,a,b,\th)\widehat{h}(b,a, b, \th), \quad x \leq b, \label{h_hat_recursion}
\end{align}
where  $g$ and $h$ are as defined in \eqref{Parisbailouts} and \eqref{Parisbailouts_2}.
Again by the strong Markov property and \eqref{Y_matches},
\begin{align*}
\widehat{h}(b,a,b,\th)&=\E_b\left[e^{-q\eta_0^-}\E_{Y^b(\eta_0^{-})}\left(e^{-q\tau_a^-};\tau_a^-<\mathbf{e}_{\p}\wedge\tau_0^+\right)\right]\\
&+\E_b\left[e^{-q\eta_0^-}\E_{Y^b(\eta_0^{-})}\left(e^{-q\tau_0^+};\tau_0^+<\mathbf{e}_{\p}\wedge\tau_a^-\right)\right]\widehat{h}(0,a,b, \theta)\\
&+\E_b\left[ e^{-q\eta_0^-}\E_{Y^b(\eta_0^{-})}\left(e^{-q\mathbf{e}_{\p}+\theta X(\mathbf{e}_{\p})};\mathbf{e}_{\p}<\tau_0^+\wedge\tau_a^-\right)\right] \widehat{h}(0,a, b, \theta)\\
&=\widehat{U}_3 (b, a,b)+ [\widehat{U}_1(b, a,b, \theta) + \widehat{U}_2 (b, a,b)] \widehat{h}(0,a, b, \theta).
\end{align*}
Substituting this in \eqref{h_hat_recursion},
\begin{align}\label{h_0_reflected_1}
\widehat{h}(x,a,b, \th)=h(x,a,b,\th)+g(x,a,b,\th)\left(\widehat{U}_3 (b, a,b)+ [\widehat{U}_1(b, a,b, \theta) + \widehat{U}_2 (b, a,b)] \widehat{h}(0,a, b, \theta)\right).
\end{align}
Setting $x=0$ and solving for $\widehat{h}(0,a,b, \th)$, we obtain by \eqref{g_zero}, \eqref{h_zero}, and \eqref{H_H_derivative_relation},
\begin{align}\label{h_0_reflected_2}
\widehat{h}(0,a, b, \theta)=\frac{\widehat{U}_3 (b, a,b)-\mathcal{I}^a_{q,r}(b)}{\mathcal{H}^a_{q,r}(b,\th)-[\widehat{U}_1 (b, a,b,\th)+\widehat{U}_2 (b, a,b)]}=-\frac{\mathcal{I}_{\q,\p}^{a \prime}(b)}{\mathcal{H}^{a \prime}_{q,r}(b,\theta)}.
\end{align}
Finally, substituting \eqref{h_0_reflected_2} in \eqref{h_0_reflected_1} and using  \eqref{g_zero}, \eqref{h_zero}, and \eqref{U_0_hat} allows us to obtain the result.
	\subsection{Proof of Theorem \ref{theorem_R_b_r}}
We shall prove for the case $x \leq b$.  The case $x > b$ is then immediate by reflection.
		By \eqref{Y_X_matches}, the strong Markov property at $\tau_b^+(r)$ on $\{ \tau_b^+(r) < \tau_a^-(r)\}$, and \eqref{Parisbailouts}, we have
				\begin{align}
		\widehat{j}(x,a, b) &=g(x,a,b,0)\widehat{j}(b,a,b)=\frac{\mathcal{H}^a_{q,r}(x,0)}{\mathcal{H}^a_{q,r}(b,0)}\widehat{j}(b,a,b), \quad x \leq b. \label{j_relation}
		\end{align}
By the strong Markov property and \eqref{Y_matches}, together with (3.12) of \cite{APP},
		\begin{align}
		\widehat{j}(b,a,b) &= \E_b \Big( \int_0^{\eta_0^-} e^{-qt} \diff L^b (t) \Big) +\E_b\Big[e^{-q\eta_0^-}\E_{Y^b(\eta_0^-)}\left(e^{-q(\tau_0^+\wedge \mathbf{e}_{\p})}; \tau_a^- > \tau_0^+\wedge \mathbf{e}_{\p} \right)\Big] \widehat{j}(0,a,b)\notag \\
		&= \frac {W_q(b)} {W_q'(b+)} +\big[ \widehat{U}_1(b,a,b,0) + \widehat{U}_2(b,a,b) \big] \widehat{j}(0,a,b).
		 \label{j_b_b}
		\end{align}
		Substituting  \eqref{j_b_b} in \eqref{j_relation}, and setting $x=0$,
				\begin{align*}
		\widehat{j}(0,a,b) &=\frac{1}{\mathcal{H}^a_{q,r}(b,0)} \Big( \frac {W_q(b)} {W_q'(b+)}+\big[ \widehat{U}_1(b,a,b,0) + \widehat{U}_2(b,a,b)\big] \widehat{j}(0,a,b) \Big),
		\end{align*}
		and hence, together with \eqref{H_H_derivative_relation},
						\begin{align*}
		\widehat{j}(0,a,b)  &=  \frac {W_q(b)} {W_q'(b+)} \Big( \mathcal{H}^a_{q,r}(b,0) - \widehat{U}_1(b,a,b,0) - \widehat{U}_2(b,a,b) \Big)^{-1} =  \mathcal{H}^{a \prime}_{q,r}(b,0)^{-1}.
		\end{align*}
Substituting this in \eqref{j_b_b}  allows us to obtain that $\widehat{j}(b,a,b) =  \mathcal{H}^a_{q,r}(b,0) / \mathcal{H}^{a \prime}_{q,r}(b,0)$. This together with  \eqref{j_relation} completes the proof.

\subsection{Proof of Theorem \ref{theorem_L_b_r}}
We focus on  the case $x \leq b$.  The case $x > b$ is then immediate by reflection.

By \eqref{Y_X_matches}, the strong Markov property at $\tau_b^+(r)$ on $\{ \tau_b^+(r) < \tau_a^-(r)\}$, and  Theorems \ref{theorem_laplace} and \ref{expected_bailouts},		\begin{align}
		\widehat{f}(x,a, b) &= -h_{\q,\p}^{a}(x) + \frac{\mathcal{H}^a_{q,r}(x,0)}{\mathcal{H}^a_{q,r}(b,0)} [\widehat{f}(b,a, b) + h_{\q,\p}^{a}(b)]. \label{f_hat_relation}
		\end{align}
By the strong Markov property and \eqref{Y_matches},
\begin{align} \label{f_hat_relation_b}
\begin{split}
		\widehat{f}(b,a,b) &= -\E_b\Big[ e^{-q\eta_0^-}\E_{Y^b(\eta_0^-)}\left(e^{-q \mathbf{e}_{\p} }X(\mathbf{e}_{\p});\mathbf{e}_{\p} <
		\tau_0^+ \wedge \tau_a^- \right)\Big] \\ &+\E_b\Big[e^{-q\eta_0^-}\E_{Y^b(\eta_0^-)}\left(e^{-q(\tau_0^+\wedge \mathbf{e}_{\p})}; \tau_a^- > \tau_0^+\wedge \mathbf{e}_{\p} \right)\Big] \widehat{f}(0,a,b)\\
		&= - \widehat{U}_4(b,a,b) +\big[ \widehat{U}_1(b,a,b,0) + \widehat{U}_2(b,a,b)\big] \widehat{f}(0,a,b).
		\end{split}
		\end{align}
Substituting \eqref{f_hat_relation_b} in \eqref{f_hat_relation} and setting $x = 0$ and noticing that $h_{\q,\p}^{a}(0) = 0$,
\begin{align*}
		\widehat{f}(0,a, b) &=  \frac 1 {\mathcal{H}^a_{q,r}(b,0)} \Big[ - \widehat{U}_4(b,a,b) +\big[ \widehat{U}_1(b,a,b,0) + \widehat{U}_2(b,a,b)\big] \widehat{f}(0,a,b) +  h_{\q,\p}^{a}(b) \Big].
		\end{align*}
		Hence, by \eqref{H_H_derivative_relation}, 
		\begin{align*}
		\widehat{f}(0,a, b)  &=  \frac {h_{\q,\p}^{a}(b)  - \widehat{U}_4(b,a,b)} {\mathcal{H}^a_{q,r}(b,0)  -  \widehat{U}_1(b,a,b,0) - \widehat{U}_2(b,a,b)} =  \frac {h_{\q,\p}^{a \prime}(b)} {\mathcal{H}^{a \prime}_{q,r}(b,0)}.
		\end{align*}
		Substituting this in \eqref{f_hat_relation_b} and then in \eqref{f_hat_relation} and using \eqref{U_0_hat}, we have the claim.

\appendix

\section{Proofs} \label{appendix_proofs}

\subsection{Proofs of Corollaries in Section \ref{subsection_joint_Laplace}} \label{appendix_proofs_regular}

We shall first summarize the limits needed to show the corollaries.

\begin{lemma} \label{lemma_limits_b}
Fix $q > 0$ and $a < 0$. We have, as $x\uparrow\infty$, 
(i) $ \mathcal{I}_{q,r}^a(x) / {W_q(x)} \rightarrow \mathcal{J}_{q,r}(-a)$, (ii) ${\mathcal{H}_{q,r}^a(x,\th)} /{W_q(x)} \rightarrow \mathcal{G}_{q,r}(-a,\th)$ for  $\theta \geq 0$, \quad (iii) $h_{q,r}^a(x) / {W_q(x)} \rightarrow h_{q,r}(-a)$.
\end{lemma}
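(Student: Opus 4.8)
The plan is to reduce all three limits to the $x\uparrow\infty$ asymptotics of the building blocks $W_q,Z_q,\overline{Z}_q$ and their $a$-shifted versions $W^a_{q,r},Z^a_{q,r},\overline{Z}^a_{q,r}$ from \eqref{W_a_def}, each normalised by $W_q(x)$. Since $q>0$ we have $\Phi_q>0$, and \eqref{W_q_limit} gives $e^{-\Phi_q x}W_q(x)\nearrow\kappa'(\Phi_q)^{-1}$; writing $W_q(x+c)/W_q(x)=e^{\Phi_q c}(e^{-\Phi_q(x+c)}W_q(x+c))/(e^{-\Phi_q x}W_q(x))$ and noting that the last ratio lies between $1$ and $\kappa'(\Phi_q)^{-1}/(e^{-\Phi_q x}W_q(x))$ for $c\ge 0$, one gets $W_q(x+c)/W_q(x)\to e^{\Phi_q c}$ uniformly for $c$ in compact subsets of $[0,\infty)$. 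Integrating, this yields $Z_q(x)/W_q(x)\to q/\Phi_q$ and $\overline{Z}_q(x)/W_q(x)\to q/\Phi_q^2$.

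Next I would record the needed ``key ratios''. Equation \eqref{nl} (with $\alpha=q,\beta=r$) already gives $W^a_{q,r}(x)/W_q(x)\to Z_{q+r}(-a,\Phi_q)$, and replacing $a$ by $-u$ gives $W^{-u}_{q,r}(x)/W_q(x)\to Z_{q+r}(u,\Phi_q)$, uniformly for $u\in[0,-a]$ in view of the representation $W^{-u}_{q,r}(x)=W_q(x+u)+r\int_0^u W_q(x+u-w)W_{q+r}(w)\,\diff w$ and the uniform convergence above. For $Z^a_{q,r}$ and $\overline{Z}^a_{q,r}$ I would use the second representations in \eqref{W_a_def}, e.g. $Z^a_{q,r}(x)=Z_q(x-a)+r\int_0^{-a}W_q(x-u-a)Z_{q+r}(u)\,\diff u$, normalise by $W_q(x)$, pass the limit through the $\diff u$-integral by dominated convergence (the ratio $W_q(x-u-a)/W_q(x)$ is bounded and converges uniformly to $e^{\Phi_q(-u-a)}$ on $[0,-a]$), and evaluate the resulting integrals $re^{-\Phi_q a}\int_0^{-a}e^{-\Phi_q u}Z_{q+r}(u)\,\diff u$ and its $\overline{Z}$-analogue by integration by parts, using $Z_{q+r}'=(q+r)W_{q+r}$, $\overline{Z}_{q+r}'=Z_{q+r}$, $\kappa(\Phi_q)=q$, and $Z_{q+r}(x,\Phi_q)=e^{\Phi_q x}(1+r\int_0^x e^{-\Phi_q z}W_{q+r}(z)\,\diff z)$ from \eqref{Z_special}. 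Combined with $-rZ_{q+r}(-a)+(q+r)Z_{q+r}(-a,\Phi_q)=qZ_{q+r,-r}(-a)$ (immediate from \eqref{Z2}), this should give
\[
\frac{Z^a_{q,r}(x)}{W_q(x)}\longrightarrow\frac{q}{\Phi_q}Z_{q+r,-r}(-a),\qquad\frac{\overline{Z}^a_{q,r}(x)}{W_q(x)}\longrightarrow\frac{q}{\Phi_q^2}Z_{q+r,-r}(-a)-\frac{r}{\Phi_q}\overline{Z}_{q+r}(-a).
\]

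With these in hand, the three claims follow by normalising the defining expressions \eqref{def_H_I} and \eqref{small_h_def} by $W_q(x)$ and passing to the limit term by term, using dominated convergence for the $\diff u$-integral inside $\mathcal H^a_{q,r}$ (with the uniform bound on $W^{-u}_{q,r}(x)/W_q(x)$). For (ii) one reads off $\mathcal H^a_{q,r}(x,\theta)/W_q(x)\to r\int_0^{-a}e^{-\theta u}[Z_{q+r}(-a,\Phi_q)W_{q+r}(u)/W_{q+r}(-a)-Z_{q+r}(u,\Phi_q)]\diff u+Z_{q+r}(-a,\Phi_q)/W_{q+r}(-a)$, which is exactly $\mathcal G_{q,r}(-a,\theta)$; for (i) one gets $\mathcal I^a_{q,r}(x)/W_q(x)\to\frac{q}{\Phi_q}Z_{q+r,-r}(-a)-Z_{q+r}(-a,\Phi_q)Z_{q+r}(-a)/W_{q+r}(-a)=\mathcal J_{q,r}(-a)$; and (iii) follows after the elementary rearrangement of \eqref{small_h_def} together with (i). I expect the only genuinely delicate point to be the justification of these limit--integral interchanges---i.e.\ the uniform control of $W_q(x+c)/W_q(x)$ and $W^{-u}_{q,r}(x)/W_q(x)$ over the compact range of shifts determined by $a$, which rests on the monotonicity in \eqref{W_q_limit}; the rest is bookkeeping with the identities \eqref{Zs}--\eqref{W_a_def}.
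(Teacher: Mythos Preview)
Your proposal is correct and follows essentially the same route as the paper's proof: both compute the limits of the building blocks $W^a_{q,r}(x)/W_q(x)$, $Z^a_{q,r}(x)/W_q(x)$, and $\overline{Z}^a_{q,r}(x)/W_q(x)$ via the second representations in \eqref{W_a_def} together with \eqref{nl} and the standard asymptotics $Z_q(x)/W_q(x)\to q/\Phi_q$, $\overline{Z}_q(x)/W_q(x)\to q/\Phi_q^2$ (which the paper imports from Exercise~8.5 of \cite{K} rather than deriving), evaluate the resulting $\diff u$-integrals by parts, and then read off (i)--(iii) from the definitions \eqref{def_H_I} and \eqref{small_h_def}. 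The only difference is that you are more explicit than the paper about the uniform control of $W_q(x+c)/W_q(x)$ and $W^{-u}_{q,r}(x)/W_q(x)$ over the compact range $u\in[0,-a]$ needed to pass the limit through the integral in $\mathcal{H}^a_{q,r}$; the paper simply declares the three conclusions ``immediate'' once the building-block limits are in hand.
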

\begin{proof}
Recall  the second limit in \eqref{nl}. 
Also, by Exercise 8.5 of \cite{K},
		\begin{align}\label{limits}
		\begin{split}
		&\lim_{b\uparrow \infty}\frac{Z_{\q}(b)}{W_{\q}(b)}
		=\frac{q}{\Phi_q}, \quad \lim_{b\uparrow \infty}\frac{\overline{Z}_{\q}(b)}{W_q(b)}
		=\frac{q}{\Phi_q^2},
		\end{split}
		\end{align}
By this,  \eqref{W_q_limit} and the second equality of \eqref{W_a_def},
%
\begin{align} \label{Z_a_W_limit}
\begin{split}
		\lim_{x \uparrow \infty}\frac{Z_{q,r}^a(x)}{W_q(x)}&=\frac{q}{\Phi_q}e^{-\Phi_q a}+r\int_0^{-a}e^{-\Phi_q(u+a)}Z_{q+r}(u) \diff u \\
		&=\frac{1}{\Phi_q}\left[(q+r)Z_{q+r}(-a,\Phi_q)-rZ_{q+r}(-a)\right]=\frac{q}{\Phi_q}Z_{q+r,-r}(-a),
		\end{split}
		\end{align} 
		where in the second equality we used integration by parts. In a similar way, using, \eqref{limits}, \eqref{Z_a_W_limit} and integration by parts allows us to obtain
	\begin{align*}
	\lim_{x \uparrow \infty}\frac{\overline{Z}_{q,r}^a(x)}{W_q(x)}&=\frac{q}{\Phi_q^2}e^{-\Phi_q a}+r\int_0^{-a}e^{-\Phi_q(u+a)}\overline{Z}_{q+r}(u)\diff u\\
	&=\frac{q}{\Phi_q^2}e^{-\Phi_q a}-\frac{r}{\Phi_q}\overline{Z}_{q+r}(-a)+\frac{1}{\Phi_q}\left(\frac{q}{\Phi_q}e^{-\Phi_q a}+r\int_0^{-a}e^{-\Phi_q(u+a)}Z_{q+r}(u)\diff u \right)-\frac{qe^{-\Phi_q a}}{\Phi_q^2}\\
	&=\frac{q}{\Phi^2_q}Z_{q+r,-r}(-a)-\frac{r}{\Phi_q}\overline{Z}_{q+r}(-a).
	\end{align*}
Using these limits, (i), (ii), and (iii) are immediate.
\end{proof}
		

\begin{lemma} \label{convergence_summary}
Fix $q \geq 0$ and $x \in \R$.  We have, as $a \downarrow -\infty$, 

(i) $ \mathcal{I}_{\q,\p}^a (x) \rightarrow 0$, \quad
(ii) $\mathcal{H}^a_{q,r}(x,\theta) \rightarrow Z_{\q,\p}(x,\th)$ for $\theta \geq 0$,
\quad
(iii) if $\kappa'(0+) > -\infty$, $h_{\q,\p}^{a}(x) \rightarrow k_{\q,\p}(x)$.

			\end{lemma}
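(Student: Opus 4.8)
The plan is to prove (ii) first, by a direct computation on the scale functions, and then to deduce (i) and (iii) from it.

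For (ii), the starting point is the rewriting
$$\mathcal{H}^a_{q,r}(y,\theta)=\tilde u_1(y,a,\theta)-r\int_0^y W_q(y-w)\,\tilde u_1(w,a,\theta)\,\diff w+\frac{W^a_{q,r}(y)}{W_{q+r}(-a)},$$
with $\tilde u_1$ as in \eqref{u_1_tilde}; this comes from \eqref{def_H_I} by inserting the first representations of $W^a_{q,r}$ and $W^{-u}_{q,r}$ from \eqref{W_a_def} and applying Fubini's theorem. By \eqref{nl} the last term tends to $Z_q(y,\Phi_{q+r})$. For the first two terms I would first take $\theta>\Phi_{q+r}$, so that $\int_0^\infty e^{-\theta u}W_{q+r}(u)\,\diff u=1/(\kappa(\theta)-(q+r))$ by \eqref{scale_function_laplace}; then, using \eqref{W_q_limit} and \eqref{Zs}, dominated convergence (the dominating bound on $[0,y]$ being immediate from \eqref{u_1_tilde} and the finiteness of that integral) yields $\tilde u_1(w,a,\theta)\to\tilde u_1^{\infty}(w,\theta):=\frac{r}{\kappa(\theta)-(q+r)}\bigl(Z_{q+r}(w,\Phi_{q+r})-Z_{q+r}(w,\theta)\bigr)$, with $Z_{q+r}(w,\Phi_{q+r})=e^{\Phi_{q+r}w}$. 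Writing $\mathcal{A}[v](y):=v(y)-r\int_0^y W_q(y-w)v(w)\,\diff w$ (so that $\mathcal{A}[W_{q+r}(\cdot-a)]=W^a_{q,r}$), this gives $\mathcal{H}^a_{q,r}(y,\theta)\to\mathcal{A}[\tilde u_1^{\infty}(\cdot,\theta)](y)+Z_q(y,\Phi_{q+r})$. The whole computation then hinges on the identity $\mathcal{A}[Z_{q+r}(\cdot,\theta)]=Z_q(\cdot,\theta)$ (in particular $\mathcal{A}[e^{\Phi_{q+r}\cdot}]=Z_q(\cdot,\Phi_{q+r})$), which I would verify by Laplace transforms: since $\mathcal{L}[\mathcal{A}[v]](s)=\mathcal{L}[v](s)\,\frac{\kappa(s)-(q+r)}{\kappa(s)-q}$ and $\mathcal{L}[Z_{q+r}(\cdot,\theta)](s)=\frac{\kappa(s)-\kappa(\theta)}{(s-\theta)(\kappa(s)-(q+r))}$, both sides have Laplace transform $\frac{\kappa(s)-\kappa(\theta)}{(s-\theta)(\kappa(s)-q)}=\mathcal{L}[Z_q(\cdot,\theta)](s)$. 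Substituting, the limit collapses to $\frac{\kappa(\theta)-q}{\kappa(\theta)-(q+r)}Z_q(y,\Phi_{q+r})-\frac{r}{\kappa(\theta)-(q+r)}Z_q(y,\theta)$, which is exactly $Z_{q,r}(y,\theta)$ of \eqref{Z2} with $\alpha=q$, $\beta=r$. To remove the restriction $\theta>\Phi_{q+r}$: by \eqref{g_zero}, $\mathcal{H}^a_{q,r}(x,\theta)=1/g(0,a,x,\theta)$, and as $a\downarrow-\infty$ the events $\{\tau^+_x(r)<\tau^-_a(r)\}$ increase to $\{\tau^+_x(r)<\infty\}$, so by monotone convergence $\mathcal{H}^a_{q,r}(x,\theta)$ converges, for every $\theta\ge0$, to $1/g^{\infty}(x,\theta)$ with $g^{\infty}(x,\cdot)$ real-analytic and strictly positive on $(0,\infty)$ (and continuous at $0$); since $1/g^{\infty}(x,\cdot)$ and $Z_{q,r}(x,\cdot)$ (which has a removable singularity at $\Phi_{q+r}$) are real-analytic on $(0,\infty)$ and agree on $(\Phi_{q+r},\infty)$, they agree on all of $[0,\infty)$ by the identity theorem and continuity.

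Part (i) follows from (ii): the second expression for $\mathcal{H}^a_{q,r}(y,0)$ given in Theorem \ref{theorem_laplace} reads $\mathcal{H}^a_{q,r}(y,0)=(q+r)^{-1}\bigl(-r\mathcal{I}^a_{q,r}(y)+q\,W^a_{q,r}(y)/W_{q+r}(-a)+rZ_q(y)\bigr)$; solving for $\mathcal{I}^a_{q,r}(y)$, letting $a\downarrow-\infty$ and invoking \eqref{nl} together with (ii) at $\theta=0$, the limit is $\frac{q}{r}Z_q(y,\Phi_{q+r})+Z_q(y)-\frac{q+r}{r}Z_{q,r}(y)$, which vanishes because $Z_{q,r}(y)=\frac{r}{q+r}Z_q(y)+\frac{q}{q+r}Z_q(y,\Phi_{q+r})$ by \eqref{Z_special}. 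Part (iii) also follows from (ii): differentiating \eqref{def_H_I} in $\theta$ and comparing with \eqref{small_h_def} (equivalently, combining \eqref{U_1_U_2_sum} with \eqref{new_iden_u4}) gives $h^a_{q,r}(x)=\partial_\theta\mathcal{H}^a_{q,r}(x,\theta)\big|_{\theta=0+}$. The convergence in (ii), and that of the $\theta$-derivatives, are locally uniform in $\theta$ on $(0,\infty)$ (again via the $\mathcal{A}$-representation and dominated convergence), and the limiting quantities extend continuously to $\theta=0$ precisely because $\kappa'(0+)>-\infty$; hence one may interchange $\partial_\theta$ with $\lim_{a\downarrow-\infty}$ to obtain $h^a_{q,r}(x)\to\partial_\theta Z_{q,r}(x,\theta)\big|_{\theta=0+}$. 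A short computation using $\kappa(0)=0$, $\partial_\theta Z_q(x,0)=\overline Z_q(x)-\kappa'(0+)\overline W_q(x)$ and \eqref{Z2} then identifies this derivative with the function $k_{q,r}$ of Corollary \ref{corollary_L_b}.

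The main difficulty is part (ii): a naive term-by-term limit of $\mathcal{H}^a_{q,r}(x,\theta)$ breaks down, because several pieces individually blow up like $W_{q+r}(-a)$ (or $\overline W_{q+r}(-a)$) as $a\downarrow-\infty$, and extracting finite limits requires a delicate cancellation. The point of the $\tilde u_1$-and-$\mathcal{A}$ rewriting is precisely to package that cancellation into the single clean identity $\mathcal{A}[Z_{q+r}(\cdot,\theta)]=Z_q(\cdot,\theta)$; and the subsequent analytic-continuation step is what lets one reach all $\theta\ge0$ without having to control the (in general unknown) rate of the convergence $e^{-\Phi_{q+r}x}W_{q+r}(x)\nearrow\kappa'(\Phi_{q+r})^{-1}$ in \eqref{W_q_limit}. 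Parts (i) and (iii) are then essentially bookkeeping, once (ii) is in hand.
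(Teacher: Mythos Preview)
Your route for (ii) is sound and parallels the paper's: both establish the limit first for $\theta>\Phi_{q+r}$ by direct scale-function computation, then extend to $\theta\ge 0$ by analytic continuation. The packaging differs: you organise the computation through the operator $\mathcal{A}[v](y)=v(y)-r\int_0^y W_q(y-w)v(w)\,\diff w$ and the single identity $\mathcal{A}[Z_{q+r}(\cdot,\theta)]=Z_q(\cdot,\theta)$, whereas the paper unpacks the pieces of $\mathcal{H}^a_{q,r}$ and invokes the same convolution identity in the concrete form (5) of \cite{LRZ}. For the continuation step, the paper uses the excursion-measure representation \eqref{U_1_excursion_convergence}; you use instead the probabilistic identity $\mathcal{H}^a_{q,r}(x,\theta)=1/g(0,a,x,\theta)$ from \eqref{g_zero} and monotone convergence. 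Both arguments are correct for $x>0$ (note \eqref{g_zero} needs $x>0$; for $x\le 0$ the convergence is immediate from the probabilistic meaning of $\tilde u_1,\tilde u_2$). Your deduction of (i) from (ii) via the algebraic relation in Theorem~\ref{theorem_laplace} is fine and in fact shorter than the paper's direct dominated-convergence proof.

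There is, however, a genuine gap in your argument for (iii). You claim the convergence of $\partial_\theta\mathcal{H}^a_{q,r}(x,\theta)$ is locally uniform on $(0,\infty)$ ``via the $\mathcal{A}$-representation and dominated convergence'', but that representation only converges for $\theta>\Phi_{q+r}$: for $\theta\le\Phi_{q+r}$ the integrals $\int_0^\infty u e^{-\theta u}W_{q+r}(u)\,\diff u$ diverge, so dominated convergence is unavailable there. One can recover local uniformity on $(0,\infty)$ by an analytic-function argument (the $g(0,a,x,\cdot)$ are uniformly bounded Laplace transforms converging pointwise, hence locally uniformly together with all derivatives), but even then your conclusion does not follow: local uniform convergence on the \emph{open} interval $(0,\infty)$ together with continuity of the limiting derivative at $0$ does \emph{not} by itself imply convergence of $\partial_\theta\mathcal{H}^a_{q,r}(x,0+)$. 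You still need control at the boundary, e.g.\ equicontinuity in $\theta$ uniformly in $a$, or monotonicity. The paper closes this gap differently: it works directly with the representation $h^a_{q,r}(x)=-U_4^0(a,x)=-W_q(x)\,\mathbf{n}\bigl(e^{-q\tau_0^-}u_4(X(\tau_0^-),a);\tau_0^-<\tau_x^+\bigr)$ from \eqref{U_0_summary} and Corollary~\ref{corollary_big_U_zero}, observes that $u_4(\cdot,a)$ is monotone in $a$ (since $X(\mathbf{e}_r)<0$ and the event $\{\mathbf{e}_r<\tau_0^+\wedge\tau_a^-\}$ increases), and applies monotone convergence under $\mathbf{n}$. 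Your derivative-interchange strategy can be repaired along the same lines, but as written it does not stand.
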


\begin{proof}
(i)	By  \eqref{laplace_in_terms_of_z}, \eqref{W_a_def}, and dominated convergence,
\begin{align*}
\lim_{a \downarrow -\infty} \mathcal{I}_{\q,\p}^a (x)
&=\lim_{a \downarrow -\infty} \Big[ \E_x \big(e^{-(\q+\p)\tau_{a}^-};\tau_a^-<\tau_{0}^+ \big)-r\int_0^xW_{\q}(x-z)\E_z(e^{-(\q+\p)\tau_{a}^-};\tau_a^-<\tau_{0}^+) \diff z \Big] =0.
\end{align*}

	(ii)  First suppose $\theta > \Phi_{q+r}$.
	By \eqref{scale_function_laplace} and \eqref{nl},
	\begin{align*}
	\lim_{a \downarrow -\infty }\frac{W^a_{\q,\p}(x)}{W^a_{\q+\p}(-a)}\int_{0}^{-a} e^{-\theta u}W_{\q+\p} (u)\diff u&=Z_{\q}(x,\Phi_{\q+\p})\int_0^{\infty} e^{-\theta u}W_{\q+\p} (u)\diff u=\frac{Z_{\q}(x,\Phi_{\q+\p})}{\k(\th)-(\q+\p)}.
	\end{align*}
	In addition, we have that
	\begin{align*}
	\lim_{a\downarrow -\infty}\int_{0}^{-a}e^{-\th u} W^{-u}_{\q,\p}(x)\diff u&=\int_0^{\infty}e^{-\th u}\left(W_{\q+\p}(x+u)-r\int_{0}^{\infty}W_{\q}(x-z)W_{\q+\p}(z+u)\diff z\right)\diff u.
	\end{align*}
	Here, by \eqref{scale_function_laplace},
	\[
	\int_0^{\infty}e^{-\th u}W_{\q+\p}(x+u)\diff u=e^{\th x}\left(\frac{1}{\k(\th)-(\q+\p)}-\int_0^xe^{-\th u}W_{\q+\p}(u)\diff u\right),
	\]
	and 
	\begin{align*}
\int_{0}^{\infty}W_{\q}(x-z)W_{\q+\p}(z+u)\diff z
&=\int_{0}^{\infty}W_{\q}(x+u-y)W_{\q+\p}(y)\diff y-\int_{0}^{u}W_{\q}(x+u-y)W_{\q+\p}(y)\diff y.
	\end{align*}	
	Now by the identity (5) in \cite{LRZ}, we have that
	\[
	\int_{0}^{\infty}W_{\q}(x+u-y)W_{\q+\p}(y)\diff y=\frac{1}{r}\left[W_{\q+\p}(x+u)-W_{\q}(x+u)\right].
	\]
	In addition,
	\begin{align*}
	\int_0^{\infty}e^{-\th u}&\int_{0}^{u}W_{\q}(x+u-y)W_{\q+\p}(y)\diff y\diff u=\int_0^{\infty}W_{\q+\p}(y)\int_{y}^{\infty}e^{-\th u}W_{\q}(x+u-y)\diff u\diff y\\
	&=\int_0^{\infty} e^{-\th y} W_{\q+\p}(y)\diff y\int_0^{\infty}e^{-\th z}W_{\q}(x+z)\diff z = [\kappa(\theta)-(q+r)]^{-1}\int_0^{\infty}e^{-\th z}W_{\q}(x+z)\diff z.
	\end{align*}
	Therefore, we get
	\begin{align*}
	r\int_0^{\infty}e^{-\th u}\int_{0}^{\infty}W_{\q}(x-z)W_{\q+\p}(z+u)\diff z \diff u&=-\frac{\kappa(\th)-q}{\k(\th)-(\q+\p)}e^{\th x}\left(\frac{1}{\k(\th)-q}-\int_0^xe^{-\th z}W_{\q}(z)\diff z\right)\\
	&+e^{\th x}\left(\frac{1}{\k(\th)-(\q+\p)}-\int_0^xe^{-\th u}W_{\q+\p}(u)\diff u\right).
	\end{align*}
	Hence $\int_{0}^{-a}e^{-\th u}W^{-u}_{\q,\p}(x)\diff u \xrightarrow{a \downarrow -\infty}Z_{\q}(x,\th) / [\k(\th)-(\q+\p)]$.
	Putting the pieces together, we obtain that
	\begin{align}\label{H_limit_-a}
	\lim_{a \downarrow -\infty}\mathcal{H}^a_{q,r}(x,\theta)&=\p\frac{Z_{\q}(x,\Phi_{\q+\p})}{\k(\th)-(\q+\p)}-\frac{\p}{\k(\th)-(\q+\p)} Z_{\q}(x,\th)+Z_{\q}(x,\Phi_{\q+\p})=Z_{\q,\p}(x,\th).
	\end{align}
	
We shall now extend this to $\theta \geq 0$. By Corollary \ref{corollary_big_U_zero},  \eqref{nl}, \eqref{U_0_summary}, \eqref{U_1_U_2_sum}, and monotone convergence, for $\theta > \Phi_{q+r}$,
\begin{multline} \label{U_1_excursion_convergence}
- W_q(x) \mathbf{n} \Big(e^{-q\tau_0^-} \E_{X(\tau_0^-)}
\left(e^{-q\mathbf{e}_{\p}+\theta X(\mathbf{e}_{\p})};
\mathbf{e}_{\p}<\tau_0^+ \right);\tau_0^-<\t_x^+\Big) = - \lim_{a \downarrow -\infty}U_1^0(a,x, \theta)  \\
= \lim_{a \downarrow -\infty} \Big( \mathcal{H}^a_{q,r}(x,\theta) - \frac {W^a_{q,r} (x)} {W_{q+r}(-a)} \Big) = Z_{\q,\p}(x,\th) - Z_q (x, \Phi_{q+r}), 
\end{multline}
which can be analytically extended to $\theta \geq 0$  as in the proof of Theorem 3.1 in \cite{AIZ}.
This shows (ii) for $\theta \geq 0$.

(iii) By \eqref{U_1_excursion_convergence}, 
\begin{multline*}
-W_q( x) \mathbf{n} \Big(e^{-q\tau_0^-} \E_{X(\tau_0^-)}
\left(e^{-q\mathbf{e}_{\p}}X(\mathbf{e}_{\p});
\mathbf{e}_{\p}<\tau_0^+ \right);\tau_0^-<\t_x^+\Big) = \lim_{\theta \downarrow 0}\frac \partial {\partial \theta}Z_{\q,\p}(x,\th) \\ = \fr{r}{q+r} \lim_{\theta \downarrow 0}\frac \partial {\partial \theta}Z_{q}(x,\th) + \fr{r \kappa'(0+)}{(q+r)^2} [Z_{q}(x)- Z_{q}(x,\Phi_{q+r} ) ],
\end{multline*}
where simple algebra gives $\lim_{\theta \downarrow 0} (\partial Z_{q}(x,\th) / {\partial \theta} )= - \k'(0+)  \overline{W}_{q}(x) + \overline{Z}_q(x)$.
Hence,
\begin{align*}
\lim_{\theta \downarrow 0}\frac \partial {\partial \theta}Z_{\q,\p}(x,\th) = \fr{r}{q+r} \Big(  - \k'(0+)  \overline{W}_{q}(x) + \overline{Z}_q(x) \Big) + \fr{r \kappa'(0+)}{(q+r)^2} [Z_{q}(x)-Z_{q}(x,\Phi_{q+r} ) ] = k_{q,r} (x).
\end{align*}
Now, by monotone convergence and Corollary \ref{corollary_big_U_zero}, we obtain
\begin{multline*}
h^a_{q,r}(x) = -W_q(x) \mathbf{n} \Big(e^{-q\tau_0^-} u_4 (X(\tau_0^-), a);\tau_0^-<\t_x^+\Big) \\ \xrightarrow{a \downarrow -\infty} -W_q(x) \mathbf{n} \Big(e^{-q\tau_0^-} \E_{X(\tau_0^-)}
\left(e^{-q\mathbf{e}_{\p}}X(\mathbf{e}_{\p});
\mathbf{e}_{\p}<\tau_0^+\right);\tau_0^-<\t_x^+\Big) = k_{q,r}(x),
\end{multline*}
as desired.
\end{proof}

\subsubsection{Proof of Corollary \ref{corollary_laplace_tau_a_b_infty} } 

(i) For $q > 0$, by applying dominated convergence in \eqref{Parisbailouts_2}, upon taking $b \uparrow \infty$, it is immediate by Lemma \ref{lemma_limits_b}.

For $q = 0$, by monotone convergence, it suffices to show the convergence as $q \downarrow 0$ of
		\begin{align*}
		\frac q {\Phi_q} Z_{q+r, -r} (y) = \Phi_q^{-1} [ (q+r) Z_{q+r}(y, \Phi_q)- r Z_{q+r} (y)], \quad y \in \R.
		\end{align*}
		For the case $\Phi_{0} > 0$, it is immediate that the above converges to $r  [Z_r(y,\Phi_0)-Z_r(y)] / {\Phi_0}$.
		For the case $\Phi_0 = 0$, because ${q e^{\Phi_q y}} / {\Phi_q} \xrightarrow{q \downarrow 0} \kappa'(0+)$,
		\begin{align*}
		&\Phi_q^{-1} [(q+r) Z_{q+r}(y, \Phi_q)-r Z_{q+r} (y) ]=r\frac{e^{\Phi_q y}-1}{\Phi_q}+r(r+q)\int_0^y\frac{e^{\Phi_q(y-z)}-1}{\Phi_q}W_{q+r}(z)\diff z+\frac{qe^{\Phi_q y}}{\Phi_q} \\
		 &\xrightarrow{q \downarrow 0} ry+r^2\int_0^y(y-z)W_{r}(z)\diff z +\k'(0+) =r \overline{Z}_r(y)+\k'(0+).
		\end{align*}
		Therefore,
		\begin{align*}
		\lim_{q \downarrow 0}\frac q {\Phi_q} Z_{q+r, -r} (y) =r \overline{Z}_r(y)+{\k'(0+)}.
		\end{align*}

(ii) By applying dominated convergence in \eqref{Parisbailouts}, upon taking $a \downarrow -\infty$, it is immediate by Lemma \ref{convergence_summary} (ii) for $\theta \geq 0$. 


(iii) By taking $\theta \uparrow \infty$ in (ii), 
\begin{multline*}
		\E_x \big( e^{- \q \tau_b^+(r)} ;\tau_b^+(r)<T ^-_0(1) \big)
		= \E_x \big( e^{-q\tau_b^+(r)} ; R_r (\tau_b^+ (r)) =  0\big) \\= \lim_{\th \uparrow \infty}\E_x \big( e^{-q\tau_b^+(r)-\theta R_r(\tau_b^+(r))}; \tau_b^+(r) < \infty\big) 
		 = \lim_{\th \uparrow \infty}\frac{Z_q(x,\Phi_{\q+\p})+\p Z_q(x,\th)/ (q-\k(\th))}{Z_q(b,\Phi_{\q+\p})+\p Z_q(b,\th)/(q-\k(\th))}.
		\end{multline*}
		It is now left to show that $Z_q(x,\th)/(q-\k(\th))$ vanishes in the limit as $\theta \uparrow \infty$.

Indeed, by \cite[(7)]{AIZ} (see also the identity (3.19) in \cite{APP} for an older version)
\begin{align*}
&&\E_x \big( e^{-\q \tau_0^- +\th X(\tau_0^-)} ; \tau_0^- < \infty\big) = Z_\q(x,\th) -  \fr {\k(\th)-\q}{ \th-\Phi_\q} W_\q(x).
\end{align*}
By taking $\theta \uparrow \infty$ on both sides and using Theorem 2.6 (ii) of \cite{KKR}, we have
		\begin{align*}
		\lim_{\th \uparrow \infty}
		\left(Z_{\q}(x,\th)-\frac{\k(\th)-\q}
		{\th-\Phi_{\q}}W_{\q}(x)\right)
		= \E_x \big( e^{-\q \tau_0^-}; X(\tau_0^-) = 0, \tau_0^- < \infty\big)
		=\frac{\sigma^2}{2}\left[ W_q'(x)-\Phi_qW_q(x)\right].
		\end{align*}
		Hence,
		\begin{align}\label{limr3}
		\frac{Z_{\q}(x,\th)}{\k(\th)-q}=\frac{1}{\k(\th)-q}\left(Z_{\q}(x,\th)-\frac{\k(\th)-\q}
		{\th-\Phi_{\q}}W_{\q}(x)\right)+\frac{1}{\th-\Phi_{\q}}W_{\q}(x) \xrightarrow{\theta \uparrow \infty} 0.
		\end{align}
\subsubsection{Proof of Corollary \ref{corollary_L_b}}


%
%

(i)
For $q > 0$, by applying dominated convergence in Theorem \ref{expected_bailouts}, upon taking $b \uparrow \infty$, it is immediate by Lemma \ref{lemma_limits_b}. For the case with $q = 0$ and $\Phi_0 > 0$, it holds by monotone convergence upon taking $q \downarrow 0$ using the convergence obtained in the proof of Corollary \ref{corollary_laplace_tau_a_b_infty} (i).

(ii) By applying monotone convergence in Theorem \ref{expected_bailouts}, upon taking $a \downarrow -\infty$, by Lemma \ref{convergence_summary},
\begin{align*}
\E_x\left( \int_0^{\t_b^+ (r)}e^{-qt}\diff R_{\p}(t)\right)=\frac {Z_{\q,\p}( x)}  {Z_{\q,\p}( b)} k_{\q,\p}(b) -k_{\q,\p}(x).  
\end{align*}
In particular, when $q > 0$, this reduces to $\frac {Z_{\q,\p}( x)}  {Z_{\q,\p}( b)} \tilde{h}_{q,r}(b) -\tilde{h}_{q,r}(x)$.

%
%
%
%

	(iii) 	
	Recall the convergence \eqref{limits}.
By  \eqref{scale_function_laplace} and \eqref{W_q_limit},
\begin{align*}
\frac{Z_{\q}(b,\Phi_{\q+\p})}
		{W_{\q}(b)}
		= \frac {r \int_0^\infty e^{-\Phi_{q+r}z} W_q(z+b) \diff z} {W_q(b)} \xrightarrow{b \uparrow \infty} r \int_0^\infty e^{-(\Phi_{q+r}- \Phi_q) z} \diff z = \frac{\p}{\Phi_{\q+\p}-\Phi_{\q}}.
\end{align*}
Hence, upon taking $b \uparrow \infty$ in (ii), monotone convergence completes the proof.

 \subsection{Proofs of Corollaries in Section \ref{subsection_reflected_case}}

Again, we first summarize the limits needed for the proofs.
\begin{lemma} \label{limit_a_derivative} Fix $q \geq 0$. 
For all $x \in \R$, as $a \downarrow -\infty$,
(i) $ \mathcal{I}_{\q,\p}^{a \prime}(x) \rightarrow 0$, \quad
(ii) $\mathcal{H}^{a \prime}_{q,r}(x,0) \rightarrow Z'_{\q,\p}(x)$, \quad
(iii) $h_{\q,\p}^{a \prime}(x) \rightarrow k_{\q,\p}'(x) = \frac{r}{q+r}Z_{q}(x)- \frac{r}{(q+r)^2} \k'(0+) \Phi_{q+r} Z_q (x, \Phi_{q+r})$.
\end{lemma}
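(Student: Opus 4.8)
The statement is the $x$-derivative counterpart of Lemma~\ref{convergence_summary}, so the plan is to differentiate, in $x$, the convergences there (and the closed forms for $\mathcal{I}^{a\prime}_{q,r}$, $\mathcal{H}^{a\prime}_{q,r}(\cdot,0)$, $h^{a\prime}_{q,r}$ in Theorems~\ref{theorem_h_hat} and \ref{theorem_L_b_r}) and justify the interchange of $\lim_{a\downarrow-\infty}$ with $d/dx$. The one genuinely new analytic ingredient is the derivative analogue of the first limit in \eqref{nl}, namely $\lim_{a\downarrow-\infty}(W^{a}_{q,r})'(x)/W_{q+r}(-a)=\Phi_{q+r}Z_q(x,\Phi_{q+r})-rW_q(x)=Z'_q(x,\Phi_{q+r})$, locally uniformly in $x$. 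I would prove this from \eqref{W_a_derivative}: dividing by $W_{q+r}(-a)$, one uses $W_{q+r}(y-a)/W_{q+r}(-a)\to e^{\Phi_{q+r}y}$ and $W_{q+r}'((x-a)+)/W_{q+r}(-a)\to\Phi_{q+r}e^{\Phi_{q+r}x}$ (both from \eqref{W_q_limit} and the monotonicity of $W_{q+r}'/W_{q+r}$ in Remark~\ref{remark_smoothness_zero}(3)), handles the term $\int_0^x W_q'((x-y)+)\tfrac{W_{q+r}(y-a)}{W_{q+r}(-a)}\,dy$ by dominated convergence (the kernel $W_q'((x-y)+)$ being integrable on $[0,x]$ since $\int_0^x W_q'((x-y)+)\,dy=W_q(x)-W_q(0)$), and identifies the resulting expression with $Z'_q(x,\Phi_{q+r})$ after an integration by parts.

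Granting this, part (ii) is immediate from the second form of $\mathcal{H}^{a\prime}_{q,r}(y,0)$ in \eqref{H_big_zero}, $\mathcal{H}^{a\prime}_{q,r}(y,0)=\tfrac{1}{q+r}\big[q\big(\tfrac{(W^a_{q,r})'(y)}{W_{q+r}(-a)}+rW_q(y)\big)-r\,\mathcal{I}^{a\prime}_{q,r}(y)\big]$: once part~(i) is known, letting $a\downarrow-\infty$ gives $\tfrac{q}{q+r}\big(Z'_q(y,\Phi_{q+r})+rW_q(y)\big)=\tfrac{q\Phi_{q+r}}{q+r}Z_q(y,\Phi_{q+r})=Z'_{q,r}(y)$, the last equality being the identity for $Z'_{q,r}$ recorded in Corollary~\ref{corollary_R_b_r}. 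For parts~(i) and (iii) it is convenient to note that, since $\mathcal{I}^a_{q,r}$, $\mathcal{H}^a_{q,r}(\cdot,0)$, $h^a_{q,r}$ all vanish at $0$ and converge pointwise (by Lemma~\ref{convergence_summary}) to $0$, $Z_{q,r}$, $k_{q,r}$, once one knows that their $x$-derivatives converge \emph{dominatedly on compacts} to \emph{some} locally integrable functions, the fundamental theorem of calculus forces those functions to be $0$, $Z'_{q,r}$, $k'_{q,r}$; so no separate evaluation of the limits is needed beyond checking, by differentiating the expression for $k_{q,r}$ in Corollary~\ref{corollary_L_b}(ii) (using $Z'_q=qW_q$ and $Z'_q(\cdot,\Phi_{q+r})=\Phi_{q+r}Z_q(\cdot,\Phi_{q+r})-rW_q$), that $k'_{q,r}$ has the form asserted in (iii).

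The remaining --- and main --- obstacle is thus the domination and convergence of the $x$-derivatives as $a\downarrow-\infty$. Differentiating the representation $\mathcal{I}^a_{q,r}(x)=\psi_a(x)-r\int_0^x W_q(x-z)\psi_a(z)\,dz$ used in the proof of Lemma~\ref{convergence_summary}(i), with $\psi_a(z):=Z_{q+r}(z-a)-\tfrac{Z_{q+r}(-a)}{W_{q+r}(-a)}W_{q+r}(z-a)$, gives
\[
\mathcal{I}^{a\prime}_{q,r}(x)=\psi_a'(x)-rW_q(0)\psi_a(x)-r\int_0^x W_q'((x-z)+)\psi_a(z)\,dz,
\]
so it suffices to control $\psi_a$ and $\psi_a'$ (the integral term then vanishing by dominated convergence as in the first paragraph). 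The delicate one is $\psi_a'(x)=(q+r)W_{q+r}(x-a)-\tfrac{Z_{q+r}(-a)}{W_{q+r}(-a)}W_{q+r}'((x-a)+)$, a difference of two quantities each of order $W_{q+r}(-a)\to\infty$; rewriting $\psi_a(z)=\big[Z_{q+r}(z-a)-\tfrac{q+r}{\Phi_{q+r}}W_{q+r}(z-a)\big]-\big[Z_{q+r}(-a)-\tfrac{q+r}{\Phi_{q+r}}W_{q+r}(-a)\big]\tfrac{W_{q+r}(z-a)}{W_{q+r}(-a)}$ (and similarly after differentiating) exhibits $\psi_a,\psi_a'$ as differences of \emph{vanishing} factors times \emph{bounded} ones, \emph{provided} one has the refined asymptotics $Z_{q+r}(z)-\tfrac{q+r}{\Phi_{q+r}}W_{q+r}(z)\to0$ and $W_{q+r}'(z+)-\Phi_{q+r}W_{q+r}(z)\to0$ as $z\uparrow\infty$. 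I would derive these from the resolvent identity \eqref{u_q_resolvent}, which gives $W_{q+r}(z)=\kappa'(\Phi_{q+r})^{-1}e^{\Phi_{q+r}z}-u_{q+r}(-z)$ for $z\ge0$, together with $u_{q+r}(-z)\to0$ as $z\uparrow\infty$ (vanishing of the $(q+r)$-resolvent density at $-\infty$). The same input, now exploiting the decay \emph{rate} of $u_{q+r}(-z)$, handles $h^{a\prime}_{q,r}$ --- in particular the term proportional to $a\,\mathcal{I}^{a\prime}_{q,r}(x)$ appearing in the formula for $h^{a\prime}_{q,r}$ from Theorem~\ref{theorem_L_b_r}. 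Establishing these scale-function asymptotics, above all the decay of the resolvent density, is the heart of the argument.
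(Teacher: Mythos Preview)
Your approach is genuinely different from the paper's, and the difference matters. You try to pass to the limit directly in the explicit formulas for the $x$-derivatives; the paper instead \emph{reads off} the derivative limits from identities for the reflected process $Y^b$ that were already established in Section~\ref{section_proof_reflected}. Concretely: for (i)--(ii) the paper combines \eqref{h_0_reflected_2}, which gives $\widehat{h}(0,a,b,0)=-\mathcal{I}^{a\prime}_{q,r}(b)/\mathcal{H}^{a\prime}_{q,r}(b,0)$, with the second line of \eqref{H_big_zero} to obtain an algebraic relation between $\mathcal{I}^{a\prime}_{q,r}(b)$ and $\widehat{h}(0,a,b,0)$; since $\widehat{h}(0,a,b,0)\to 0$ (a soft probabilistic fact: $\eta_a^-(r)\to\infty$ as $a\downarrow-\infty$) and $(W^a_{q,r})'(b)/W_{q+r}(-a)\to Z'_q(b,\Phi_{q+r})$ (your observation, proved in the paper by one integration by parts), one solves for $\mathcal{I}^{a\prime}_{q,r}(b)\to 0$ and then $\mathcal{H}^{a\prime}_{q,r}(b,0)\to Z'_{q,r}(b)$. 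For (iii) the paper uses \eqref{H_H_derivative_relation}, namely $h^a_{q,r}(b)-\widehat{U}_4(b,a,b)=\tfrac{W_q(b)}{W_q'(b+)}h^{a\prime}_{q,r}(b)$, evaluates $\lim_{a\downarrow-\infty}\widehat{U}_4(b,a,b)$ probabilistically via the three reflected-process formulas \eqref{identities_AAP}, and combines with $h^a_{q,r}(b)\to k_{q,r}(b)$ from Lemma~\ref{convergence_summary}(iii). No refined scale-function asymptotics are needed at all.

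Your direct route, by contrast, hinges on exactly those asymptotics, and this is where there is a real gap. For (i) you need $\psi_a'(x)\to 0$, which after your decomposition reduces to $W'_{q+r}(z)-\Phi_{q+r}W_{q+r}(z)\to 0$ as $z\to\infty$. Writing $W_{q+r}(z)=e^{\Phi_{q+r}z}\widetilde W(z)$ with $\widetilde W$ the $0$-scale function under the $\Phi_{q+r}$-Esscher transform, this asks for $e^{\Phi_{q+r}z}\widetilde W'(z)\to 0$, i.e.\ that the density of the all-time infimum (under the tilted measure) decays faster than $e^{-\Phi_{q+r}z}$. This is not a consequence of \eqref{W_q_limit} or of the resolvent identity \eqref{u_q_resolvent} alone, and you have not proved it; for $\sigma>0$ one can rescue it via the creeping identity (Theorem~2.6(ii) of \cite{KKR}), but for $\sigma=0$ no such shortcut is available. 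Worse, for (iii) the term $-a\,\mathcal{I}^{a\prime}_{q,r}(x)$ forces you to know not just that $\mathcal{I}^{a\prime}_{q,r}(x)\to 0$ but that it is $o(1/|a|)$, which would require a \emph{quantitative} decay rate for $u_{q+r}(-z)$ and its derivative. You acknowledge this is ``the heart of the argument'' but leave it open; the paper's structural approach sidesteps it entirely.
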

\begin{proof}
(i), (ii) By \eqref{h_0_reflected_2}, we have that $\widehat{h}(0,a,b,0)=- \mathcal{I}^{a \prime}_{q,r}(b) /\mathcal{H}^{a \prime}_{q,r}(b,0)$.
This together with \eqref{H_big_zero} gives
\begin{align}
\widehat{h}(0,a,b,0) {\frac q {q+r}   \Big(\frac{(W^{a }_{\q,\p})'(b)}{W_{q+r}(-a)}+r W_q(b) \Big)  } = {\mathcal{I}^{a \prime}_{q,r}(b)} \Big( \frac r {q+r} \widehat{h}(0,a,b,0) - 1 \Big).\label{h_hat_relation}
\end{align}
Now note that $\widehat{h}(0,a,b,0)\to0$ as $a\downarrow -\infty$. On the other hand by
	\eqref{W_a_def} and an integration by parts
\begin{align*}
(W^{a }_{\q,\p})'(b)  =W_{q+r}'((b-a)+)-rW_q(b)W_{q+r}(-a)-r\int_0^{-a}W_{q}(b-y)W_{q+r}'(y-a) \diff y,
\end{align*}
we have
\begin{align}
\lim_{a \downarrow -\infty}\frac{q}{q+r}\frac{(W^{a }_{\q,\p})'(b)}{W_{q+r}(-a)}+\frac{qr}{q+r}W_q(b)=Z_{q,r}'(b). \label{W_prime_ratio_limit}
\end{align}
Hence, upon taking limits as $a \downarrow -\infty$ in \eqref{h_hat_relation},
we obtain (i). In addition, (ii) holds as well by \eqref{H_big_zero} and \eqref{W_prime_ratio_limit}.

(iii) By (3.10), (3.18), and (3.16) in \cite{APP}, we have 
\begin{align} \label{identities_AAP}
\begin{split}
\E_b\left(e^{-q\eta_0^-}\right)&=Z_q(b)-q\frac{W_q(b)^2}{W_q'(b+)},\\
\E_b\left(e^{-q\eta_0^-+\Phi_{q+r}Y^b(\eta_0^-)} \right)&=Z_q(b,\Phi_{q+r})-\frac{W_q(b)}{W_q'(b+)}Z_q'(b,\Phi_{q+r}),\\
\E_b\left(e^{-q\eta_0^-}Y^b(\eta_0^-) \right)&=\overline{Z}_q(b)-\k'(0+)\overline{W}_q(b)-\frac{W_q(b)}{W_q'(b+)}[Z_q(b)-\k'(0+)W_q(b)],
\end{split}
\end{align}
where $Z_q'(b,\Phi_{q+r})$ is the partial derivative with respect to $b$ given by $Z_q'(b,\Phi_{q+r}) = \Phi_{q+r} Z_q (b, \Phi_{q+r}) - r W_q(x)$.

In addition, by \eqref{U_0_summary}, Lemma \ref{convergence_summary} (iii) applied to Corollary \ref{corollary_big_U}, for $x < 0$,
\begin{align*}
\E_{x}\left(e^{-q\mathbf{e}_{\p}}X(\mathbf{e}_{\p});\mathbf{e}_{\p}<\tau_0^+\right) = \lim_{a \downarrow -\infty} \lim_{b \uparrow \infty}U_4(x, a,b) &= - \lim_{a \downarrow -\infty} U_4^0(a,x) = k_{q,r} (x) = r\frac {(\q+\p)x +(1-e^{\Phi_{\q+\p} x})\k'(0+)} {(q+r)^2}.
\end{align*}
By this, \eqref{identities_AAP}, and  monotone convergence, it follows that
\begin{align*}
\lim_{a\downarrow -\infty}\widehat{U}_4(b,a,b)
&=\E_b\Big[e^{-q\eta_0^-}\E_{Y^b(\eta_0^-)}\left(e^{-q\mathbf{e}_{\p}}X(\mathbf{e}_{\p});\mathbf{e}_{\p}<\tau_0^+\right) \Big] \\
	&=\frac{r}{(q+r)^2} \E_b\left[e^{-\q\eta_0^-}\left((\q+\p) Y^b(\eta_0^-)+(1-e^{\Phi_{\q+\p} Y^b(\eta_0^-)})\k'(0+)\right) \right] \\
	&=k_{\q,\p}(b)-\frac{W_q(b)}{W_q'(b+)}k_{\q,\p}'(b).
	\end{align*}
Therefore using
\eqref{H_H_derivative_relation} and Lemma \ref{convergence_summary} (iii), we obtain
\begin{align*}
\lim_{a \downarrow -\infty}h_{q,r}^{a \prime}(b)=\lim_{a \downarrow -\infty}\frac{W_q'(b+)}{W_q(b)}\left( h_{\q,\p}^{a}(b)- \widehat{U}_4(b,a,b)\right)=k_{\q,\p}'(b).
\end{align*}

\end{proof}
	
	 \subsubsection{Proof of Corollary \ref{corollary_R_b_r}}
By monotone convergence, the result is immediate upon taking $a \downarrow -\infty$ in Theorem \ref{theorem_R_b_r} by Lemmas \ref{convergence_summary} and \ref{limit_a_derivative}.	
	
%


\subsubsection{Proof of Corollary \ref{corollary_L_b_r}}


By monotone convergence applied to  Theorem \ref{theorem_L_b_r} and by Lemmas \ref{convergence_summary} and \ref{limit_a_derivative}, for $x \leq b$,
%
%
\begin{align*}
\E_x\left( \int_{0}^{\infty}e^{-qt} \diff R^b_r(t)\right) = \frac{Z_{q,r}(x)}{Z_{q,r}'(b)} k_{\q,\p}'(b) - k_{\q,\p}(x)
&=\frac{r}{q+r}\frac{Z_{q,r}(x)}{Z_{q,r}'(b)}Z_q(b) - \tilde{h}_{q,r}(x).
\end{align*}
	

\section*{Acknowledgements}
We are grateful to Xiaowen Zhou for useful suggestions, including
an alternative approximation approach, which may provide a different solution of the unbounded variation case. 
We thank as well the anonymous referees for careful reading and  helpful
suggestions. K. Yamazaki is in part supported by MEXT KAKENHI grant no. 26800092.
\bibliographystyle{abbrv}
\bibliography{Pare31}
\end{document}